\newtheorem{theorem}{Theorem}[section]
\newtheorem{proposition}[theorem]{Proposition}
\newtheorem{lemma}[theorem]{Lemma}
\newtheorem{corollary}[theorem]{Corollary}
\newtheorem{proof}{\textmd{\textit{Proof.}}}
\newtheorem{remark}[theorem]{Remark}
\newtheorem{definition}[theorem]{Definition}
\newtheorem{acknowledgement}{\textmd{\textit{Acknowledgements.}}}
\newcommand{\qedd}{\hfill \Box}
\newcommand{\ve}{\varepsilon}
\newcommand{\ora}{\overrightarrow}
\newcommand{\wt}{\widetilde}
\newcommand{\wh}{\widehat}
\newcommand{\ol}{\overline}
\newcommand{\divi}{\divideontimes}
\newcommand{\cdas}{\circledast}
\newcommand{\B}{\ensuremath{\mathbb{B}}}
\newcommand{\N}{\ensuremath{\mathbb{N}}}
\newcommand{\R}{\ensuremath{\mathbb{R}}}
\newcommand{\Sph}{\ensuremath{\mathbb{S}}}
\newcommand{\cF}{\ensuremath{\mathcal{F}}}
\newcommand{\cL}{\ensuremath{\mathcal{L}}}
\newcommand{\cM}{\ensuremath{\mathcal{M}}}
\newcommand{\cU}{\ensuremath{\mathcal{U}}}
\def\adj{\mathop{\mathrm{adj}}\nolimits}
\def\Bd{\mathop{\mathrm{Bd}}\nolimits}
\def\inj{\mathop{\mathrm{inj}}\nolimits}
\def\ra{\mathop{\mathrm{rank}}\nolimits}
\def\grad{\mathop{\mathrm{grad}}\nolimits}
\def\supp{\mathop{\mathrm{supp}}\nolimits}
\def\id{\mathop{\mathrm{id}}\nolimits}
\def\Cut{\mathop{\mathrm{Cut}}\nolimits}
\def\L{\mathop{\mathrm{Lip}}\nolimits}
\def\Conv{\mathop{\mathrm{Conv}}\nolimits}
\def\Im{\mathop{\mathrm{Im}}\nolimits}
\def\Ker{\mathop{\mathrm{Ker}}\nolimits}
\def\Sing{\mathop{\mathrm{Sing}}\nolimits}
\title{
\Large{
Approximations of Lipschitz maps 
via Ehresmann fibrations and Reeb's 
sphere theorem 
for Lipschitz functions}\footnote{
2010 Mathematics Subject Classification: 
Primary 49J52, 53C20;
Secondary 57R12, 57R55.}
\footnote{Key words and phrases: 
convex analysis, 
Ehresmann fibration, Lipschitz map, 
nonsmooth analysis, Reeb's sphere theorem, 
smooth approximation.}\\
{\small {\it To the memory of Kazumasa Kuwada}}
}
\author{Kei KONDO\footnote{
Supported by 
the Grant-in-Aid for Science Reserch (C), 
JSPS KAKENHI Grant Numbers 
16K05133, 17K05220, 18K03280.}
}
\date{\today}
\begin{document}
\maketitle

\begin{abstract}
We show, as our main theorem, that if 
a Lipschitz map from a compact Riemannian 
manifold $M$ to a connected compact 
Riemannian manifold $N$, 
where $\dim M \ge \dim N$, has no singular 
points on $M$ in the sense of F.H.\,Clarke, 
then the map admits a smooth approximation via 
Ehresmann fibrations. We also show the Reeb sphere theorem 
for Lipschitz functions, i.e., if a closed Riemannian 
manifold admits a Lipschitz function with exactly two singular 
points in the sense of Clarke, then the manifold 
is homeomorphic to the sphere. 
\end{abstract}


\section{Introduction}\label{sec1}

\subsection{Background: Grove--Shiohama theory 
for distance functions}

Armed with the Toponogov Comparison Theorem 
\cite{Toponogov} (see also \cite{CE}, \cite{Sak}), K. Grove and K. Shiohama \cite{GS} developed a theory for critical points of distance functions on complete Riemannian manifolds that has played a fundamental role in the study of relationships between curvature and topology. Denote by $X$ a complete Riemannian manifold, 
$d$ its distance function, and $T_xX$ the tangent space at each $x \in X$. Fix $p \in X$, and set $d_p (x):= d(p, x)$ for all $x \in X$. 
Note that $d_p$ is a $1$-Lipschitz function and is smooth 
on $X \setminus (\{p\} \cup \Cut (p))$ where $\Cut (p)$ 
indicates the cut locus of $p$. (For basic definitions in Riemannian geometry see, for example, \cite{CE}, \cite{dC}, \cite{Sak}.) 
Grove and Shiohama gave the following 
meaningful definition in order to do research into 
how $d_p$ behaves. 

\begin{definition}{\rm (\cite{GS})}\label{2018_07_01_def_GS}
A point $q \in X\setminus \{p\}$ is said to be 
{\em critical for $d_p$} (or a {\em critical point of $d_p$}) 
{\em in the sense of Grove--Shiohama} 
if for each $v \in T_q X \setminus \{o_q\}$ 
there is a unit speed minimal geodesic segment 
$\gamma :[0, d_p(q)] \to X$ emanating from 
$p = \gamma (0)$ to $q = \gamma (d_p(q))$ 
such that 
$\angle 
(
- (d\gamma / dt )(d_p(q)), v  
)
\le \pi / 2$ 
where $\angle 
(- (d\gamma / dt)(d_p(q)), v)$ 
denotes the angle between two vectors 
$- (d\gamma / dt)(d_p(q))$ and $v$ in $T_qX$. 
For convenience we also call 
$p$ a critical point of $d_p$.
\end{definition}
The origins of this definition can be found in the work 
of M. Berger \cite{Ber}: the point of maximal distance from a given point $x \in X$ is a critical point of $d_x$. See the survey articles by 
J. Cheeger \cite{Ch} and by K. Grove \cite{Grove} 
on critical points of distance functions. 
Note that any critical point of $d_x$ is also a cut point of $x$.\par 
Another major development, due to M. Gromov \cite{G1}, 
was in the topology of regions free of critical points.

\begin{lemma}{\bf (Gromov's isotopy lemma)}\label{2018_11_03_Gromov}
If $0 < R_1 < R_2 \le \infty$, 
and if $d_p$ has no critical points on 
$\overline{B_{R_2}(p)} \setminus B_{R_1}(p)$, 
then $\overline{B_{R_2}(p)} \setminus B_{R_1}(p)$ is homeomorphic to 
$\partial B_{R_1}(p) \times [R_1, R_2 ]$ 
where each $B_{R_{i}}(p)$ denotes 
the metric open ball with center $p$ and radius $R_i$, and 
$\overline{B_{R_{i}}(p)}$ indicates the closure of 
$B_{R_{i}}(p)$ ($i=1,2$).
\end{lemma}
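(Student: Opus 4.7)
The plan is to build a continuous vector field $V$ on the closed annulus $\overline{B_{R_2}(p)} \setminus B_{R_1}(p)$ that uniformly ``pushes outward'' along $d_p$, and then use its normalized flow to produce an explicit homeomorphism with the product $\partial B_{R_1}(p) \times [R_1, R_2]$.

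First, I would carry out a pointwise construction. Fix $q$ in the annulus. The set $\Gamma_q$ of unit speed minimal geodesics from $p$ to $q$ is compact in the $C^\infty$ topology on $[0,d_p(q)]$, so by the non-criticality of $q$ in the sense of Definition \ref{2018_07_01_def_GS} there exist a unit vector $w_q \in T_qX$ and $\varepsilon_q>0$ such that
\[
\angle\bigl(-(d\gamma/dt)(d_p(q)),\, w_q\bigr) \;>\; \pi/2 + \varepsilon_q
\qquad \text{for every } \gamma \in \Gamma_q.
\]
Parallel transport along short geodesics extends $w_q$ to a continuous unit vector field $W_q$ on a neighborhood $U_q$ of $q$. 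By the upper semicontinuity of the multivalued map $x \mapsto \Gamma_x$, I may shrink $U_q$ so that the same strict angle inequality (with $\varepsilon_q/2$ in place of $\varepsilon_q$) holds uniformly for all $x \in U_q$ and all $\gamma \in \Gamma_x$.

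Next, I would glue. When $R_2 < \infty$, the annulus is compact, so finitely many $U_{q_i}$ cover it; let $\{\rho_i\}$ be a subordinate partition of unity and set $V(x) := \sum_i \rho_i(x)\, W_{q_i}(x)$. Because the half-space $\{v \in T_xX : \angle(-(d\gamma/dt)(d_p(x)),v) > \pi/2\}$ is convex and contains each $W_{q_i}(x)$ with $\rho_i(x)>0$ simultaneously for every $\gamma \in \Gamma_x$, the convex combination $V(x)$ inherits a uniform angle gap. A first variation argument along minimal geodesics then gives constants $a,b>0$ such that for every $x$ and every $\gamma \in \Gamma_x$,
\[
\liminf_{s \downarrow 0} \frac{d_p(\exp_x sV(x)) - d_p(x)}{s} \;\ge\; a, \qquad |V(x)| \le b.
\]
This shows that along any integral curve $t \mapsto \phi_t(x)$ of $V$, the Lipschitz function $t \mapsto d_p(\phi_t(x))$ is strictly increasing with derivative bounded below by $a$ and above by $b$ almost everywhere.

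Finally, I would reparametrize the flow by arclength in $d_p$: define $\psi_s(x)$ to be $\phi_{\tau(s,x)}(x)$ where $\tau$ is chosen so that $d_p(\psi_s(x)) = d_p(x)+s$. The above bounds make $\tau$ well defined and continuous, and extend $\psi$ continuously to the closed annulus. The map
\[
\Phi : \partial B_{R_1}(p) \times [R_1, R_2] \longrightarrow \overline{B_{R_2}(p)} \setminus B_{R_1}(p),
\qquad \Phi(y,t) := \psi_{t - R_1}(y),
\]
is then a continuous bijection which is a homeomorphism onto its image: injectivity from the strict monotonicity of $d_p$ along flow lines, surjectivity by flowing backwards from each $x$ to its first hit of $\partial B_{R_1}(p)$. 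When $R_2 = \infty$, I would exhaust the annulus by compact sub-annuli $\overline{B_{R}(p)} \setminus B_{R_1}(p)$ and patch the constructions via a paracompact partition of unity, checking that the flow remains complete in forward time because its $d_p$-speed is bounded.

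The main obstacle is the non-smoothness of $d_p$ on $\Cut(p)$: one cannot simply set $V = -\nabla d_p$. The technical heart of the argument is therefore the angle-gap transfer from a single $q$ to a uniform neighborhood, which depends on the upper semicontinuity of $\Gamma_x$ in $x$, and the first variation estimate that converts the pointwise angle gap into the uniform lower bound $a$ on the Dini derivative of $d_p$ along the flow. Once these are in hand, the rest is a standard flow-box argument.
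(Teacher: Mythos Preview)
The paper does not supply a proof of Lemma~\ref{2018_11_03_Gromov}; it is quoted in the introduction as background, attributed to Gromov \cite{G1} (see also \cite{Ch}, \cite{Sak}). So there is no in-paper argument to compare against.

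Your sketch is the standard proof and is essentially correct. A few small points are worth tightening. First, the vector field $V$ you build is in fact smooth, not merely continuous: each $W_{q_i}$ comes from parallel transport in a normal neighborhood and the $\rho_i$ are smooth, so the local flow of $V$ exists by the usual ODE theorem without any extra work; you should say ``smooth'' rather than ``continuous'' so that existence and uniqueness of integral curves is immediate. Second, in the final step you assert that $\Phi$ is a homeomorphism; for $R_2<\infty$ this follows because $\Phi$ is a continuous bijection from a compact space to a Hausdorff space, and it would help to say so explicitly. Third, the case $R_2=\infty$ deserves a word of caution already at the level of the statement: $\overline{B_\infty(p)}\setminus B_{R_1}(p)=X\setminus B_{R_1}(p)$ and the target should be read as $\partial B_{R_1}(p)\times[R_1,\infty)$. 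Your exhaustion argument handles this, but note that forward completeness of the flow is guaranteed not by a bound on the $d_p$-speed but by the fact that integral curves stay in the (closed, hence complete) set $X\setminus B_{R_1}(p)$ and $|V|$ is locally bounded; the lower bound $a$ on the $d_p$-speed is what ensures the reparametrized flow reaches every level. With these clarifications your argument goes through.
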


The Toponogov comparison 
theorem \cite{Toponogov} (see also \cite{CE}, \cite{Sak}) 
together with the isotopy lemma yields 
the {\em diameter sphere theorem}:  

\begin{theorem}{\rm (\cite{GS})}\label{2018_11_07_GS}
If the sectional curvature of $X$ is bounded from below 
by $1$, and if the diameter of $X$ is greater 
than $\pi/2$, then $X$ is homeomorphic to the sphere.   
\end{theorem}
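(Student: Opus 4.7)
The plan is to choose two diameter-realizing poles $p, q \in X$, use Toponogov's comparison theorem to show that $d_p$ admits only $p$ and $q$ as Grove--Shiohama critical points, and then exhibit $X$ as the suspension of a small geodesic sphere using Gromov's isotopy lemma.

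First I fix $p, q \in X$ with $d(p,q) = D := \diam(X) > \pi/2$. Since $\mathrm{sec}\,X \ge 1$, Myers' theorem yields $D \le \pi$, and the Berger lemma quoted in the text makes $p$ and $q$ both critical for $d_p$. The heart of the argument is to prove that these are the \emph{only} critical points. Given $x \in X \setminus \{p,q\}$, let $\sigma$ be a unit-speed minimal geodesic from $x$ to $q$, set $v := (d\sigma/dt)(0) \in T_x X$, and pick any unit-speed minimal geodesic $\gamma$ from $p$ to $x$. The triangle (SSS) version of Toponogov's comparison theorem applied to the triangle $(p, x, q)$, with comparison triangle on $\Sph^2$ of curvature $1$, forces
\[
\theta \;:=\; \angle\bigl(-(d\gamma/dt)(d(p,x)),\, v\bigr) \;\ge\; \tilde\theta,
\]
where the comparison angle $\tilde\theta$ satisfies the spherical law of cosines
\[
\cos\tilde\theta \;=\; \frac{\cos D - \cos a \cos b}{\sin a \sin b}, \qquad a := d(p,x),\ b := d(x,q).
\]
Since $a, b \in (0, \pi)$ one has $\sin a \sin b > 0$; together with $\cos D < 0$, the inequalities $\cos a, \cos b \ge \cos D$ (from $a, b \le D$) and $\cos a < 1$ (from $x \ne p$) yield $\cos a \cos b > \cos D$ by a short sign analysis. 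Hence $\tilde\theta > \pi/2$ and $\theta > \pi/2$ for \emph{every} choice of $\gamma$, so $x$ is not critical. In particular $q$ is the unique maximizer of $d_p$, since any other maximizer would be critical by Berger's lemma.

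With the critical set pinned down to $\{p, q\}$, I apply Lemma \ref{2018_11_03_Gromov} on the annuli $\overline{B_{D-\epsilon}(p)} \setminus B_\epsilon(p)$ for $\epsilon$ less than the injectivity radius at $p$, each of which is homeomorphic to $\partial B_\epsilon(p) \times [\epsilon, D-\epsilon] \cong S^{m-1} \times [\epsilon, D-\epsilon]$. Using a single global gradient-like vector field for $d_p$ on $X \setminus \{p,q\}$ (which exists precisely because $d_p$ has no critical points there, and which is the engine of the proof of Lemma \ref{2018_11_03_Gromov}), these cylinder homeomorphisms can be chosen consistently as $\epsilon \searrow 0$, yielding a global level-preserving homeomorphism
\[
\Phi: S^{m-1} \times (0, D) \longrightarrow X \setminus \{p, q\}, \qquad d_p \circ \Phi(s, t) = t.
\]
Extend $\Phi$ to $\bar\Phi: S^{m-1} \times [0, D] \to X$ by $\bar\Phi(s, 0) := p$ and $\bar\Phi(s, D) := q$. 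Continuity at $t = 0$ is immediate from $d(\bar\Phi(s, t), p) = t$; continuity at $t = D$ uses uniqueness of the maximum of $d_p$ at $q$ together with compactness of $X$ to force $\bar\Phi(s, t) \to q$. The induced map from the suspension $\Sigma S^{m-1} \cong \Sph^m$ to $X$ is a continuous bijection from a compact space to a Hausdorff space, hence a homeomorphism.

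The main obstacle is globalizing the cylinder structures supplied by Lemma \ref{2018_11_03_Gromov} into the single level-preserving homeomorphism $\Phi$: the lemma as stated handles each annulus individually, and one must verify that the cylinder coordinates can be taken compatibly across an exhaustion $\epsilon_n \searrow 0$ of $X \setminus \{p,q\}$. The cleanest remedy is to go back to the proof of the lemma, producing a single smooth gradient-like vector field on $X \setminus \{p, q\}$ by partition of unity and integrating it. The Toponogov step, by contrast, is robust once one chooses the test direction $v$ to point toward $q$; recognizing this direction is the one non-routine observation in the first half of the argument.
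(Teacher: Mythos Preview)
Your argument is correct and is essentially the original Grove--Shiohama proof. The Toponogov step is sound: the sign analysis for $\cos a\cos b>\cos D$ goes through in every case (the product is nonnegative when $\cos a$ and $\cos b$ share a sign, and when they do not, multiplying the negative factor by a number in $[0,1)$ strictly raises it above $\cos b\ge\cos D$). The globalization in step two via a single gradient-like field on $X\setminus\{p,q\}$ is the standard remedy, and your suspension map is the usual one.

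The paper does not give a free-standing proof of this theorem; it is quoted from \cite{GS}. What the paper does supply, in Remark~\ref{2018_08_09_rem1.6}\,(ii), is a derivation of it from the paper's own Lipschitz Reeb sphere theorem (Theorem~\ref{2019_06_08_thm1.7}). The first half of that derivation coincides with yours: Toponogov together with Berger's lemma pins the Grove--Shiohama critical set of $d_p$ down to $\{p,q\}$. The second half is different. Rather than invoke Lemma~\ref{2018_11_03_Gromov} and build the suspension by hand, the paper appeals to Proposition~\ref{2020_01_10_prop2.19} to identify Grove--Shiohama critical points with Clarke singular points of the Lipschitz function $d_p$, notes (Remark~\ref{2018_08_09_rem1.6}\,(i)) that a small geodesic sphere $d_p^{-1}(r)$ with $r<\inj(X)/2$ is homeomorphic to $S^{m-1}$, and then feeds $d_p$ directly into Theorem~\ref{2019_06_08_thm1.7}. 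Your route is the classical, self-contained one; the paper's route is intentionally indirect, its point being to exhibit the diameter sphere theorem as a special case of the new Lipschitz machinery.
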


\subsection{Critical points of Lipschitz functions}
The method of Grove and Shiohama has many applications (see \cite{AbGr}, \cite{G1}, \cite{GP}, \cite{KO}, \cite{KT2010}, 
and the survey articles \cite{Ch}, \cite{Grove}). 
A natural question to ask is whether it can be extended to general Lipshitz functions.\par 
The purpose of this article is to tackle this question by employing 
Clarke's nonsmooth analysis. 
That is, we will extend the notion of critical points of distance functions on Riemannian manifolds to locally Lipschitz maps. In the absence of singular points we will show the existence of a family of Ehresmann fibrations 
which approximate an arbitrary Lipschitz map between 
compact manifolds without curvature assumption 
(Theorem \ref{2018_01_04_maintheorem}). Moreover we will 
show the Reeb sphere theorem for Lipschitz functions on closed Riemannian manifolds (Theorem \ref{2019_06_08_thm1.7}) which corresponds 
to that for smooth ones \cite{Ree}, \cite{Milnor2}.

\subsection{Main theorem}
Let $M$ and $N$ be smooth manifolds. 
A smooth map $f:M \to N$ is called 
{\em an Ehresmann fibration} (or a {\em locally trivial fibration}) 
if for each $x\in N$ there are an open neighborhood 
$U_x$ of $x$ and a diffeomorphism 
$g: f^{-1}(U_x) \to U_x\times f^{-1}(x)$ such that the diagram 
\[
\xymatrix{
f^{-1}(U_x) \ar[rr]^-g \ar[rd]_{f|_{f^{-1}(U_x)}} & & \ U_x\times f^{-1}(x) \ar[ld]^{\pi} \\
& U_x & 
}
\]
commutes where $\pi: U_x\times f^{-1}(x) \to U_x$, 
$\pi (p,q):=p$, 
denotes the projection to the first factor. Note that 
$\pi$ is a smooth map. 
Our main theorem is stated as follows: 

\begin{theorem}\label{2018_01_04_maintheorem}
{\rm (Main Theorem)} 
Let $F: M \to N$ be a Lipschitz map from a compact Riemannian manifold 
$M$ to a connected compact Riemannian manifold $N$ 
where $\dim M \ge\dim N$. 
If $F$ has no singular points on $M$ 
in the sense of Clarke, then 
for any $\eta>0$ there is a constant 
$\kappa(\eta) >0$ such that for each $\ve \in (0, \kappa (\eta))$ 
there is an Ehresmann fibration
$f_\ve$ from $M$ onto $N$ satisfying 
$\max_{x\in M}d_N(f_\ve(x), F(x))<\eta$.
\end{theorem}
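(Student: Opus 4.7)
The plan is to invoke Ehresmann's classical theorem, according to which every proper surjective smooth submersion between manifolds is a locally trivial fibration. Since $M$ is compact, any smooth map from $M$ is automatically proper; thus for each small $\ve>0$ it suffices to produce a smooth map $f_\ve:M\lra N$ that is a submersion, is surjective, and satisfies $\max_{x\in M}d_N(f_\ve(x),F(x))<\eta$.

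\textbf{Step 1 (Smoothing via Nash embedding).} Isometrically embed $N$ into some $\R^k$ by the Nash theorem and choose a tubular neighborhood $\cU\subset\R^k$ of $N$ with smooth nearest-point retraction $\pi_N:\cU\lra N$. Regarding $F$ as a Lipschitz map $M\lra\R^k$, one smooths it componentwise by taking a finite coordinate cover of $M$, applying a Euclidean mollifier of scale $\ve$ in each chart, and patching via a subordinate smooth partition of unity. The resulting smooth maps $\wt F_\ve:M\lra\R^k$ converge uniformly to $F$ as $\ve\to 0$. For $\ve$ small enough $\wt F_\ve(M)\subset\cU$, so one defines $f_\ve:=\pi_N\circ\wt F_\ve:M\lra N$. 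The uniform $\R^k$-closeness combined with the Lipschitz constant of $\pi_N$ on a compact subtube forces $\max_{x\in M}d_N(f_\ve(x),F(x))<\eta$ for $\ve$ below a suitable threshold $\kappa(\eta)$.

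\textbf{Step 2 (Submersion and surjectivity via Clarke).} The Clarke generalized Jacobian $\partial F(x)\subset\mathrm{Hom}(T_xM,T_{F(x)}N)$ is non-empty, compact, convex, and upper semicontinuous in $x$; the non-singularity hypothesis is precisely that every element of $\partial F(x)$ has maximal rank $\dim N$. Since the set of full-rank linear maps is open and $M$ is compact, upper semicontinuity yields a uniform open neighborhood of $\bigcup_{x\in M}\partial F(x)$ lying entirely inside this full-rank locus. The Clarke calculus shows that $d\wt F_\ve(x)$ is a convex combination (integral) of elements of $\partial F(y)$ with $y\in B_\ve(x)$ (after identifying tangent spaces in charts); for $\ve$ small such a convex combination remains in the full-rank locus. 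Hence $d\wt F_\ve(x):T_xM\lra T_{\wt F_\ve(x)}\R^k$ surjects onto $T_{f_\ve(x)}N$, and composition with $d\pi_N$ (which restricts to the identity on $TN$) yields that $df_\ve(x)$ is surjective. Thus $f_\ve$ is a submersion, and its image $f_\ve(M)$ is both open (by the submersion property) and closed (by compactness) in the connected manifold $N$, so $f_\ve(M)=N$. Ehresmann's theorem then delivers the desired fibration.

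\textbf{Main obstacle.} The decisive difficulty lies in Step 2: quantitatively comparing the derivative of the smoothed map with the Clarke generalized Jacobian of $F$, coherently across chart transitions, partition-of-unity gluings, and the final composition with $\pi_N$. The key technical input is a compactness plus upper-semicontinuity estimate producing a uniform neighborhood of $\bigcup_{x\in M}\partial F(x)$ inside the open set of surjective linear maps, together with a chain-rule / convex-combination bound keeping $df_\ve(x)$ inside that neighborhood whenever $\ve<\kappa(\eta)$. Once this submersion estimate is secured, the remaining ingredients (Nash embedding, Ehresmann's theorem, the open--closed surjectivity argument, and standard mollification bounds) proceed along routine lines.
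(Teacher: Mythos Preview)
Your overall strategy coincides with the paper's: Nash-embed $N$, mollify $F$ to $\wt F_\ve$, project via $\pi_N$, show $f_\ve=\pi_N\circ\wt F_\ve$ is a proper surjective submersion, and apply Ehresmann. The open--closed surjectivity argument and the uniform approximation are handled the same way.

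There is, however, a genuine gap in Step~2. You write that $d\pi_N$ ``restricts to the identity on $TN$'' and conclude that $df_\ve(x)$ is surjective. But $(d\pi_N)_y$ is the identity on $T_yN$ only when $y\in N$; at $y=\wt F_\ve(x)\notin N$ the differential $(d\pi_N)_{\wt F_\ve(x)}$ has kernel equal to the translated normal space at $f_\ve(x)$, and there is no ``$TN$'' sitting inside $T_{\wt F_\ve(x)}\R^k$ on which it acts as the identity. What must actually be shown is that $\Im(d\wt F_\ve)_x$ meets $\Ker(d\pi_N)_{\wt F_\ve(x)}$ only in $\{0\}$ (or more precisely, that the composition has rank $n$). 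The paper secures this via a quantitative inner-product estimate (Proposition~\ref{2018_03_06_lem5.6}): for every unit $\tilde u\in T_{\wt F(q)}N$ there is $w$ with $\langle(d\wt F_\ve)_q(w),\tilde u\rangle\ge\delta(p)/3$, which shows $\Im(d\wt F_\ve)_p$ is transverse to the \emph{parallel-translated} kernel at $\wt F(p)$; a separate continuity argument (Lemma~\ref{2018_05_08_lem5.9}) then passes from $\Ker(d\pi_N)_{\wt F(p)}$ to $\Ker(d\pi_N)_{\wt F_\ve(p)}$.

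A second, lesser issue: the assertion that $d\wt F_\ve(x)$ is ``a convex combination of elements of $\partial F(y)$ with $y\in B_\ve(x)$'' is an oversimplification. Differentiating the Riemannian mollifier introduces Jacobi-field corrections (Remark~\ref{2020_01_13_rem4.8}), and differentiating the partition of unity introduces terms $\sum(d\psi_i)\wt F_\ve^{(p_i)}$ that are not controlled by $\partial F$ directly. This is precisely why the paper works with a one-sided inner-product lower bound rather than a ``full-rank is open'' argument: the lower bound survives the Jacobi-field error (Key Lemma~\ref{2018_03_04_lem5.3}) and the partition-of-unity terms (absorbed in Proposition~\ref{2018_03_06_lem5.6}), whereas a naive convex-combination picture does not.
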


\begin{remark}
Let us mention remarks on 
Theorem \ref{2018_01_04_maintheorem}: 
\begin{enumerate}[{\rm (i)}]
\item The definition of a singular point of Lipschitz 
maps in the sense of Clarke will be given 
in Section \ref{2018_sec2.1}.
\item The author and Tanaka showed 
the existence of a family of immersions which approximate an arbitrary Lipschitz map between compact manifolds, see 
\cite[Theorem 1.3]{KT}. Li \cite{Li} announced 
an another proof of \cite[Theorem 1.3]{KT}. 
\cite[Corollary 1.15]{KT} guarantees that an assumption 
on \cite[Proposition 22]{GH} is natural.
\item The related result is the Yamaguchi fibration 
theorem \cite{Yam1}: Let $X$ and $Y$ be 
complete Riemannian manifolds of $\dim X =n$ and 
$\dim Y =k$, respectively, where $n\ge k$. 
Assume that both sectional curvatures are 
bounded from below by $-1$, and that the injectivity radius 
of $Y$ has a lower bound $\delta >0$. 
He then showed that there is a constant 
$\ve (n, \delta) >0$ such that if 
$d_{{\rm GH}} (X, Y) < \ve (n, \delta)$, then there is a fibration $f:X\to Y$ which is an almost Riemannian submersion 
where $d_{{\rm GH}}$ indicates the Gromov--Hausdorff distance. He also gave this type of fibration 
theorem for Alexandrov spaces \cite{Yam2}. 
Moreover Fujioka \cite{Fuji} showed 
a locally trivial fibration theorem for 
Alexandrov spaces assuming 
a lower positive bound for the volume of the space of directions.
\end{enumerate}
\end{remark}

\subsection{Reeb's sphere theorem for Lipschitz functions}

In the process of proving Theorem \ref{2018_01_04_maintheorem} we obtain 
the following corollary of a proposition for Lipschitz maps between Riemannian manifolds: 

\begin{corollary}{\rm 
(Corollary \ref{2018_09_21_cor5.8} 
in Section \ref{2018_02_16_Sect5})}\label{2018_11_07_cor1.6}
Let $F$ be a Lipschitz function on a compact 
Riemannian manifold $M$, and 
$\wt{F}_\ve: M\to \R$ the global smooth approximation of $F$ (see Definition \ref{2018_02_25_def4.9} for $\ell =1$). 
If $p \in M$ is nonsingular for $F$ in the sense of 
Clarke, then there are two constants 
$\lambda (p) >0$ and $\ve_0 (p) >0$ such that 
if $\ve \in (0, \ve_0 (p))$, then 
$\grad \wt{F}_\ve \not =0$ on the metric open ball 
$B_{\lambda (p)}(p)$ with center $p$ 
and radius $\lambda (p)$ 
where $\grad \wt{F}_\ve$ denotes the gradient vector field of $\wt{F}_\ve$. 
In particular $\wt{F}_\ve$ has no critical points 
on $B_{\lambda (p)}(p)$ for an $\ve >0$ 
sufficiently small. 
\end{corollary}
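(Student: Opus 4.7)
The plan is to combine two ingredients standard in Clarke's nonsmooth analysis: the upper semicontinuity of the generalized gradient $\partial F$, and a mean-value representation showing that $\grad \wt{F}_\ve(x)$ lies (approximately) inside the convex hull of the set of generalized gradients at points near $x$. Since $p$ is nonsingular, $0 \notin \partial F(p)$; the set $\partial F(p) \subset T_p M$ is nonempty, convex, and compact, so by Hahn--Banach separation there are a unit vector $\xi \in T_p M$ and a constant $c > 0$ with $\langle v, \xi\rangle_{g_p} \ge 2c$ for every $v \in \partial F(p)$. This replaces the hypothesis ``$0 \notin \partial F(p)$'' with the more useful half-space statement.

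Next I would localize this half-space condition to a metric ball $B_{\lambda(p)}(p)$. Working inside a normal coordinate ball around $p$, I identify each $T_x M$ with $T_p M$ by parallel transport along the unique radial minimizing geodesic from $p$ to $x$. Because the set-valued map $x \mapsto \partial F(x)$ is upper semicontinuous with compact convex values (this is Clarke's standard fact for Lipschitz functions), and because parallel transport depends continuously on $x$, there is a radius $\lambda(p) > 0$ such that, under this identification, $\langle v, \xi\rangle_{g_p} \ge c$ holds for every $v \in \partial F(x)$ and every $x \in B_{\lambda(p)}(p)$. Shrinking $\lambda(p)$ if necessary, I ensure that the exponential map, its Jacobian, and parallel transport over $B_{\lambda(p) + \delta}(p)$ differ from their Euclidean counterparts by factors in, say, $(1-c/(4C), 1+c/(4C))$, where $C$ bounds the Lipschitz constant of $F$.

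Finally I unfold the definition of the Riemannian mollifier $\wt{F}_\ve$ from Definition 4.9. Differentiating under the integral and integrating by parts against the kernel (using Rademacher's theorem to make sense of $\grad F$ a.e., together with Clarke's integral characterization $\partial F(y) = \mathrm{co}\,\{\lim \grad F(y_n)\}$) yields, for $x \in B_{\lambda(p)}(p)$ and $\ve$ sufficiently small, a representation of the form
\[
\grad \wt{F}_\ve(x) \;=\; \int_{B_\ve(0_x) \subset T_x M} \rho_\ve(v)\, P_{x \leftarrow \exp_x v}\bigl(\tau(x,v)\bigr)\, J_\ve(x,v)\, dv,
\]
where $\tau(x,v) \in \partial F(\exp_x v)$ is a measurable selection, $P$ is parallel transport along the radial geodesic, and $J_\ve(x,v) \to 1$ uniformly. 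By the previous step each integrand satisfies $\langle \cdot,\xi\rangle_{g_p} \ge c/2$ after the identification with $T_p M$, hence so does the whole integral. This forces $\grad \wt{F}_\ve(x) \ne 0$ on $B_{\lambda(p)}(p)$ for all $\ve \in (0,\ve_0(p))$, proving the corollary.

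The main obstacle I anticipate is the third step: making rigorous the passage from the mollified derivative to a convex combination of Clarke gradients on a curved space. One must keep track of the curvature-induced distortion in $J_\ve$ and in the identification $T_xM \cong T_p M$ so that the geometric error is genuinely smaller than $c$, rather than being a constant that accidentally swamps the half-space margin. This is presumably where the underlying proposition from Section~\ref{2018_02_16_Sect5} does the heavy lifting, and it is the only place where the compactness of $M$ (hence a uniform positive lower bound on the injectivity radius) is used.
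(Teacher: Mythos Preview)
Your three-step outline---separate $0$ from $\partial F(p)$ by a hyperplane, propagate the inequality to a ball via upper semicontinuity, then show $\grad\wt F_\ve$ is an average of true gradients lying on the good side---is exactly the skeleton the paper follows. The paper packages steps~1--2 as Proposition~\ref{2018_02_12_prop3.11} (for general targets $N$, using the adjoint $\{\partial F\}^*$ rather than the generalized gradient) and step~3 as the Key Lemma~\ref{2018_03_04_lem5.3} and Proposition~\ref{2018_03_06_lem5.6}; Corollary~\ref{2018_09_21_cor5.8} is then a two-line specialization to $N=\R$. So conceptually you are on the same track, and your anticipated obstacle is indeed where the work lies.

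There is, however, a concrete inaccuracy in your step~3 that you should fix before calling the argument complete. The global approximation $\wt F_\ve$ of Definition~\ref{2018_02_25_def4.9} is \emph{not} a mollification centered at the evaluation point $x$: it is $\wt F_\ve(q)=\sum_i \psi_i(q)\,\wt F_\ve^{(p_i)}(q)$, where each $\wt F_\ve^{(p_i)}$ is a convolution performed in the linear chart $T_{p_i}M$ for a \emph{fixed} center $p_i$ from a finite cover. Consequently your displayed integral formula for $\grad\wt F_\ve(x)$ is not what differentiating Definition~\ref{2018_02_25_def4.9} produces. Two corrections are needed. First, by Remark~\ref{2020_01_13_rem4.8} the differential of each local piece is
\[
(d\wt F_\ve^{(p_i)})_q(v)=\int_{\B_\ve(o_{p_i})}\rho_\ve^{(i)}(y)\,dF_{q_i(y)}\bigl(J_y^{(v)}(1)\bigr)\,dy,
\]
so the vectors being averaged are $dF$ applied to endpoint values of Jacobi fields along geodesics from $p_i$, not parallel transports from $x$; the discrepancy $\|J_y^{(v)}(1)-\tau_{q_i(y)}^q v\|$ is one of the two error terms in Lemma~\ref{2018_03_04_lem5.3}. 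Second, the product rule on the partition of unity produces an extra term $\sum_i \wt F_\ve^{(p_i)}(q)\,(d\psi_i)_q$, which does not vanish pointwise; it is small only because $\sum_i(d\psi_i)_q=0$ and each $\wt F_\ve^{(p_i)}(q)$ is within $O(\ve)$ of the common value $F(q)$ (Lemma~\ref{2018_01_25_lem4.3}). Both of these are handled in the passage from Lemma~\ref{2018_03_04_lem5.3} to Proposition~\ref{2018_03_06_lem5.6}, but your write-up hides them behind a formula that does not hold. Once you replace your integral representation by the correct one and track these two extra errors, your argument goes through with the same conclusion $\langle \grad\wt F_\ve(x),\xi\rangle\ge \delta(p)/3$ that the paper obtains.
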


Applying this corollary we show Reeb's sphere theorem for 
Lipschitz functions: 

\begin{theorem}\label{2019_06_08_thm1.7} 
If a closed Riemannian 
manifold admits a Lipschitz function with exactly 
two singular points in the sense of Clarke, then the manifold 
is homeomorphic to the sphere.
\end{theorem}

\begin{remark}\label{2018_08_09_rem1.6} 
We give two remarks on Theorem \ref{2019_06_08_thm1.7}: 
\begin{enumerate}[{\rm (i)}]
\item Let $X$ be a closed Riemannian manifold, $p \in X$, and $d_p$ the distance function of $X$ given by $d_p (x):= d(p, x)$ for all $x \in X$. We then see 
that a point $x \in X$ is critical 
for $d_p$ in the sense of 
Grove--Shiohama if and only if $x \in X$ is singular 
for it in that of Clarke (Proposition \ref{2018_07_09_lem2.27} and Lemma \ref{2018_07_11_lem2.28}). 
Theorem \ref{2019_06_08_thm1.7} thus 
contains Reeb's sphere theorem for distance functions \cite[Proposition 2.10]{Sak}, and hence 
Theorem \ref{2019_06_08_thm1.7} 
yields Theorem \ref{2018_11_07_GS}.
\item Since the manifold in Theorem \ref{2019_06_08_thm1.7}, 
denoted by $M$ below, is a twisted sphere, we see, by \cite{Cerf}, \cite{KervaireMilnor}, \cite{Milnor3}, \cite{Palais}, 
and \cite{Sm0}, that $M$ is diffeomorphic to 
the standard sphere when $\dim M \le 6$. Moreover 
the Weinstein deformation technique for metrics (\cite{W})
shows that $M$ admits a metric such that there is a point 
whose cut locus consists of a single point. It is the worthy of noting that every exotic sphere of 
dimension greater than $4$ admits such metrics 
by the Smale $h$-cobrdism theorem \cite{Sm1}, \cite{Sm2} together with the deformation technique.
\end{enumerate}
\end{remark}

The article is organized as follows. 
In Section \ref{2016_01_24_sec2} we define 
a generalized differential for Lipschitz 
maps between Riemannian manifolds (Definitions \ref{2018_02_01_def2.3} and \ref{2018_01_09_def2.2}) 
and singular points 
of them in the sense of Clarke 
(Definition \ref{2018_01_10_def2.7}). 
Giving intrinsic definitions to them is 
another aim of this article. 
That is, although we had given the definitions in \cite{KT}, the identification, which has often been 
done in \cite{KT}, 
of the set of all linear mappings of tangent spaces 
and the vector space of matrices seems to 
have given not only an impression that it is hard 
to read it, but also a misunderstanding that 
the definitions depend on the choice of charts. 
To prevent them we thus employ parallel transports along minimal geodesics in our definitions. This is 
the big difference between our definitons 
and those in \cite{KT}. 
Moreover we also define 
the generalized gradient for Lipschitz functions on 
Riemannian manifolds (Definition \ref{2018_07_08_def2.18}) and study the relationship 
between the gradient and the generalized differential of 
them. As an example of singular points of Lipschitz functions we show finally that critical points of distance functions in the sense of Grove--Shiohama are singular ones of them in that of Clarke 
(Proposition \ref{2018_07_09_lem2.27} and 
Lemma \ref{2018_07_11_lem2.28}).\par 
In Section \ref{2018_01_18_Sect3} 
we define the adjoint of the generalized differential 
of Lipschitz 
maps between Riemannian manifolds (Definition \ref{2018_11_08_def3.8}), and discuss 
surjectivity and injectivity of the generalized differential 
near a nonsingular point of a given Lipschitz 
map (Propositions \ref{2018_02_12_prop3.11} and \ref{2018_01_28_prop2.12}, respectively). 
These propositions show that 
the set of all singular points of the map 
is a closed set in its  source space 
(Corollary \ref{2018_09_20_cor3.14}).\par 
In Section \ref{2018_01_18_Sect4} 
we first define a local smooth approximation of an arbitrary Lipschitz map between Riemannian 
manifolds on a strongly convex ball as the Riemannian convolution smoothing 
(Definiton \ref{2018_01_20_def4.1}), and 
next define the global smooth approximation of the map via a smooth partition of unity 
(Definiton \ref{2018_02_25_def4.9}).\par 
In Section \ref{2018_02_16_Sect5} we give the proof 
of Theorem \ref{2018_01_04_maintheorem}. For this we first show, broadly speaking, that a global 
smooth approximation of a Lipschitz map $F$ on a compact manifold carries on surjectivity 
of the generalized differential of $F$ (Proposition \ref{2018_03_06_lem5.6}). 
As a corollary of Proposition \ref{2018_03_06_lem5.6} we get 
Corollary \ref{2018_11_07_cor1.6}. 
Using the proposition and 
the tubular neighborhood theorem, 
we finally show the main theorem.\par 
In Section \ref{2019_06_08_Sect6}, 
making use of Corollary \ref{2018_11_07_cor1.6} with 
Morse theory, we show 
Theorem \ref{2019_06_08_thm1.7}.

\begin{acknowledgement}
The author expresses his sincere thanks to Professors 
M. Koiso, A. Katsuda, A. Mitsuishi, and Y. Otsu 
who attended his intensive course on \cite{KT}, 
organized by Koiso, at Kyushu university in the summer of 2017 and their many critical remarks, which started him considering 
intrinsic definitions of the generalized differential 
and singular points of Lipschitz maps. 
He wishes to thank Professor N. Nakauchi for helpful discussions about the intrinsic definitions in 
Section \ref{2016_01_24_sec2}. 
He wishes to express his appreciation 
to the referee for his cooperation and helpful comments. 
In particular he has been able to improve Theorem \ref{2019_06_08_thm1.7} due to the 
comments. And last but not least he is grateful to Professor K. Shiohama for his strong encouragement during the preparation of this work. 
\end{acknowledgement}

\section{Nonsmooth analysis in Riemannian geometry}\label{2016_01_24_sec2}

\subsection{From Rockafellar to Clarke}\label{2020_09_12_sub2.1}
Rockafellar \cite{Ro} was the first to introduce the notion of the subdifferential of a convex function. This was done on of Euclidean space in order to replace assumptions of smoothness with convexity and led to many results. Clarke \cite{C1}, \cite{C2} generalized Rockafellar's work to Lipschitz maps between Euclidean spaces and the subdifferential to the generalized 
differential (see Definition \ref{2018_01_09_def2.2}).\par 
The two examples below show how Clarke's generalized differential of Lipschitz maps emerges from Rockafellar's subdifferential of convex functions.
\begin{enumerate}[{\rm (i)}] 
\item 
We here recall Rockafellar's ideas, that is the 
subdifferential of convex functions. 
Let $f_1 (x):= |x|-1$ and $f_2 (x):= (x-2)^2-1$ for 
all $x \in \R$. 
Define the convex function $f:\R\to \R$ 
by $f(x):= \max\{f_1 (x), f_2(x)\}$ ($x \in \R$). 
Note that $f$ is not differentiable on $\{1, 4\}$. 
However one-sided limits of $f'$ do exist, i.e., 
$\lim_{x\uparrow 1}f'(x)=-2$, 
$\lim_{x\downarrow 1}f'(x)=1$, 
$\lim_{x\uparrow 4}f'(x)=1$, 
and $\lim_{x\downarrow 4}f'(x)=4$. 
Rockafellar's idea is 
to draw vertical segments between disconnected points 
of the graph of $f'$ using convex combinations: 
between $(1, -2)$ and $(1, 1)$ and between $(4,1)$ and $(4,4)$. 
Put differently, for each $\lambda \in [0,1]$ we have 
$(1-\lambda) \lim_{x\uparrow 1}f'(x)+ \lambda \lim_{x\downarrow 1}f'(x)= 3\lambda -2$ and 
$(1-\lambda) \lim_{x\uparrow 4}f'(x)+ \lambda \lim_{x\downarrow 4}f'(x)
= 3\lambda +1$, 
and hence 
$
\partial f(1):=\{3\lambda -2\,|\,\lambda \in [0,1]\}
= [-2,1]$ and 
$\partial f(4):=\{3\lambda +1\,|\,\lambda \in [0,1]\}
= [1,4]$. 
Since $0\in \partial f(1)$, and since $f$ is not 
monotone near $x=1$, we can regard $x=1$ as a 
critical point of $f$. In particular $f$ has the minimum value 
$0$ at $x = 1$. On the other hand 
we can regard $x=4$ as a noncritical point of $f$, 
as $0 \not\in \partial f(4)$, and 
$f$ is increasing near $x=4$. He called 
$\partial f(1)$ and $\partial f(4)$ the {\em subdifferentials} of 
$f$ at $x=1,4$, respectively.
\item 
Let $g :\R\to\R$ be 
the locally Lipschitz function defined by 
\[
g(x) = 
\begin{cases}
\ x^2 \sin \displaystyle{\frac{1}{x}} \ & (x\not=0),\\
\ 0  \ & (x=0).
\end{cases}
\]
Note that $g$ is differentiable on $\R$, 
but is not $C^1$ at $x=0$. 
Moreover we can not directly apply 
Rockafellar's idea as in example (i) 
to one-sided limits of $g'$ at $x = 0$ due to 
the term $\cos (1/x)$ in $g' (x)$. 
Clarke's idea is to choose a sequence of lines with the same slope tangent to 
the graph of $g$, or, more precisely, 
for each $\alpha \in [-1,1]$ we choose 
a sequence $\{x_i^{(\alpha)}\}_{i\in \N} \subset \R$ 
which converges to $0$ 
as $i\to\infty$ such that 
$\lim_{i\to\infty} g'(x_i^{(\alpha)})=\alpha$, and 
take the convex hull, denoted by $\Conv (A)$, of 
the (nonempty) set  
\[
A:= \big\{
\alpha \, \big|\,
\exists \{x_i^{(\alpha)}\}_{i\in \N} \subset 
\R \setminus \{0\} \ \text{such that} \ 
\displaystyle{\lim_{i \to \infty} x_i^{(\alpha)}=0, \ \lim_{i\to\infty} g'(x_i^{(\alpha)})=\alpha
}
\big\}
\] 
For instance, in the case of $\alpha = -1/2$, 
set $1/x_i^{(-1/2)} := \pi/3 + 2(i-1)\pi$ 
($i \in \N$). We then have 
$\lim_{i\to\infty}x_i^{(-1/2)} =0$ and 
$
\lim_{i\to\infty}g'(x_i)
= -1/2$. 
So $A= \Conv (A)=[-1,1]$ (cf.\,\cite[Theorem 2.1]{Ro}). 
Since $0\in \Conv (A)$, 
we can regard $x=0$ as a 
critical point of $g$. Clarke called 
$\Conv (A)$ the {\em generalized differential} of $g$.\end{enumerate} 

\subsection{An intrinsic definition of the generalized differential of Lipschitz maps}\label{2018_sec2.1}

The aim of this subsection is to 
intrinsically define the generalized differential for Lipschitz 
maps between Riemannian manifolds and their 
singular points in the sense of Clarke.\par 
We first recall Whitehead's convexity theorem. 
The theorem not only allows us to intrinsically 
define the generalized differential for Lipschitz maps between Riemannian manifolds, but also plays an important 
role in our smooth approximation method for such maps. 
A proof of the theorem can be found 
in \cite{Wh} or \cite[Proposition 4.2, pp. 76--77]{dC}.

\begin{theorem}
{\bf (Whitehead's convexity theorem)}
\label{Whitehead}
Let $X$ be a Riemannian manifold and $d_X$ 
the distance function on $X$. Then for each $x \in X$ 
there is a constant $\alpha(x)>0$ such that 
\begin{enumerate}[{\rm (a)}] 
\item\label{2018_01_28_Sect2_a} 
the open ball 
$B_{\alpha(x)}(x):=\{y \in X\,|\, d_X(x, y) <\alpha (x)\}$ 
is strongly convex, i.e., 
for any two points $p,q \in X$ 
in the closure $\ol{B_{\alpha(x)}(x)}$ 
there is a unique geodesic segment 
$\gamma:[0,1]\to X$ emanating 
from $p = \gamma (0)$ to $q= \gamma (1)$ such that
$\gamma (0,1)\subset B_{\alpha(x)}(x)$; 
\item\label{2018_01_28_Sect2_b} 
the exponential map $\exp_{x}|_{\B_{\alpha (x)}(o_x)}
: \B_{\alpha (x)}(o_x) \to B_{\alpha (x)}(x)$ at $x$ 
is a diffeomorphism where 
$\B_{\alpha (x)}(o_x):=\{v \in T_x X\,|\, \| v\|<\alpha(x)\}$ 
and $o_x$ indicates the origin of the tangent space 
$T_xX$ at $x$. 
\end{enumerate}
\end{theorem}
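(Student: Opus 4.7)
My plan is to establish (b) first via the inverse function theorem and then derive (a) from a Hessian computation for the squared distance to $x$. For (b), under the canonical identification $T_{o_x}(T_xX)\cong T_xX$ the differential $(d\exp_x)_{o_x}$ is the identity, so the inverse function theorem gives some $\rho(x)>0$ for which $\exp_x$ restricts to a diffeomorphism from $\B_{\rho(x)}(o_x)$ onto $B_{\rho(x)}(x)$. Using in addition the smoothness of $(y,v)\mapsto \exp_y v$ on a neighborhood of $(x,o_x)$ in $TX$, together with a compactness argument on a closed ball around $x$, one can shrink to a common radius $\delta(x)>0$ and an open neighborhood $W\ni x$ such that $\exp_y|_{\B_{\delta(x)}(o_y)}$ is a diffeomorphism and $W\subset \exp_y(\B_{\delta(x)}(o_y))$ for every $y\in W$. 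This yields a totally normal neighborhood, and any two of its points are then joined by a unique minimizing geodesic lying inside it.

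For (a), let $\alpha(x)\le \delta(x)$ be a radius to be fixed below. Given $p,q\in\ol{B_{\alpha(x)}(x)}$, let $\gamma:[0,1]\lra X$ be the unique minimizing geodesic from $p$ to $q$ furnished by the totally normal neighborhood; its length is $d_X(p,q)\le 2\alpha(x)$, so the triangle inequality confines $\gamma$ inside $\ol{B_{3\alpha(x)}(x)}$. Writing $V(t):=\exp_x^{-1}\gamma(t)\in T_xX$, the Gauss lemma gives
$$h(t):=\tfrac12\,d_X(x,\gamma(t))^2=\tfrac12\,\|V(t)\|^2,$$
which, since $T_xX$ is a vector space, differentiates to
$$h''(t)=\|V'(t)\|^2+\langle V(t),V''(t)\rangle.$$
In normal coordinates centered at $x$ the Christoffel symbols satisfy $\Gamma^k_{ij}(0)=0$, hence $|\Gamma^k_{ij}(V)|\le C\|V\|$ for $V$ small, and the pulled-back geodesic equation $\ddot V^k=-\Gamma^k_{ij}(V)\dot V^i\dot V^j$ yields $\|V''(t)\|\le C'\|V(t)\|\,\|V'(t)\|^2$. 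Combining these estimates gives $h''(t)\ge (1-C''\|V(t)\|^2)\|V'(t)\|^2$, so after shrinking $\alpha(x)$ so that $3\alpha(x)<\min(\delta(x),1/\sqrt{C''})$ we obtain $h''(t)>0$ on $[0,1]$.

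Strict convexity of $h$ together with $h(0),h(1)\le\alpha(x)^2/2$ forces $h(t)<\alpha(x)^2/2$ for $t\in(0,1)$, i.e.\ $\gamma(t)\in B_{\alpha(x)}(x)$, while uniqueness is already built into the totally normal neighborhood; this proves (a). The main technical obstacle is the Hessian estimate: the bound on $V''$ relies on the standard normal-coordinate expansion $g_{ij}(V)=\delta_{ij}+O(\|V\|^2)$, forcing the Christoffel symbols to be $O(\|V\|)$. Once this is in hand the rest of the argument is a combination of the inverse function theorem, the Gauss lemma, and a one-variable convexity argument. An alternative route worth flagging is to phrase the Hessian estimate invariantly via the comparison of $\mathrm{Hess}(d_x^2/2)$ with its Euclidean counterpart on $T_xX$, which gives the same conclusion without explicit Christoffel manipulations.
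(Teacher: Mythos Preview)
The paper does not give its own proof of this theorem; immediately before the statement it refers the reader to Whitehead's original paper and to do~Carmo's textbook. Your argument is essentially the standard proof found there: establish a (totally) normal neighborhood via the inverse function theorem, then use the normal-coordinate estimate $\Gamma^k_{ij}=O(\|V\|)$ to show that $t\mapsto d_X(x,\gamma(t))^2$ is strictly convex along short geodesics near $x$, forcing the interior of the minimizing segment between any two points of $\ol{B_{\alpha(x)}(x)}$ into the open ball. So your proposal is correct and agrees with the cited reference.

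One minor point worth cleaning up: your remark that the minimizing geodesic furnished by the totally normal neighborhood already ``lies inside it'' is not justified at that stage---indeed, establishing this is precisely the content of (a)---but you do not actually use that claim; you correctly confine $\gamma$ to $\ol{B_{3\alpha(x)}(x)}$ via the triangle inequality and then run the convexity argument on that larger ball. Also note that strict positivity of $h''$ needs $V'(t)\neq 0$, which holds whenever $p\neq q$; the case $p=q$ is degenerate and handled separately.
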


From now on let $M$ and $N$ be Riemannian manifolds 
of dimension $m$ and $n$, respectively, and 
$F: M\to N$ a locally Lipschitz map. The following 
lemma is a direct consequence of Theorem \ref{Whitehead}.  

\begin{lemma}\label{2018_02_01_lem2.2}
For each $p \in M$ there are 
two open balls $B_{r (p)}(p) \subset M$ 
and $B_{t (p)}(F(p))\subset N$ such that 
\begin{enumerate}[{\rm (i)}]
\item\label{2018_02_01_lem2.2_i} 
both $B_{r (p)}(p)$ and $B_{t (p)}(F(p))$ 
satisfy \eqref{2018_01_28_Sect2_a} and \eqref{2018_01_28_Sect2_b} 
of Theorem \ref{Whitehead};
\item\label{2018_02_01_lem2.2_ii}
$F(B_{r (p)}(p)) \subset B_{t (p)}(F(p))$; 
\item\label{2018_02_01_lem2.2_iii} 
$F|_{B_{r (p)}(p)}: B_{r (p)}(p)\to B_{t (p)}(F(p))$ 
is Lipschitz continuous.
\end{enumerate} 
\end{lemma}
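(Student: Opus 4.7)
The plan is to combine the Whitehead convexity theorem applied at the two points $p$ and $F(p)$ with the local Lipschitz hypothesis on $F$ and the continuity of $F$, then shrink the radius at $p$ enough to fit everything together.

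First I would invoke Theorem \ref{Whitehead} at the point $F(p) \in N$, obtaining a constant $\alpha(F(p))>0$ such that $B_{\alpha(F(p))}(F(p))$ is strongly convex and the exponential map restricted to the corresponding ball in $T_{F(p)}N$ is a diffeomorphism. I would then fix any $t(p)\in(0,\alpha(F(p))]$; since the strong convexity and diffeomorphism property persist on every concentric subball of the Whitehead ball (a standard observation one records at the start of the section, essentially because the unique minimising geodesic between two points of the subball stays inside the larger Whitehead ball and, by strong convexity plus the normal ball picture via $\exp_{F(p)}^{-1}$, inside the subball itself), the ball $B_{t(p)}(F(p))$ will satisfy \eqref{2018_01_28_Sect2_a} and \eqref{2018_01_28_Sect2_b}.

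Next I would apply Theorem \ref{Whitehead} at $p \in M$ to obtain $\alpha(p)>0$ with the corresponding two properties, and use the hypothesis that $F$ is locally Lipschitz to pick an open neighbourhood $U\ni p$ and a Lipschitz constant $L>0$ with $d_N(F(x),F(y))\le L\,d_M(x,y)$ for all $x,y\in U$. Shrinking if necessary, I may assume $U\subset B_{\alpha(p)}(p)$. I would then define
\[
r(p):=\min\!\left\{\alpha(p),\ \tfrac{t(p)}{2L},\ \rho\right\},
\]
where $\rho>0$ is chosen small enough that $B_\rho(p)\subset U$. With this choice, $B_{r(p)}(p)$ is a concentric subball of the Whitehead ball at $p$, so \eqref{2018_02_01_lem2.2_i} holds by the same subball observation as above, while \eqref{2018_02_01_lem2.2_iii} is immediate from $B_{r(p)}(p)\subset U$.

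For \eqref{2018_02_01_lem2.2_ii}, given $x\in B_{r(p)}(p)$, the Lipschitz estimate on $U$ gives
\[
d_N\bigl(F(x),F(p)\bigr)\le L\,d_M(x,p) < L\cdot r(p) \le \tfrac{t(p)}{2}<t(p),
\]
so $F(x)\in B_{t(p)}(F(p))$, as required. The only delicate point in the whole argument is the passage from the Whitehead radius $\alpha(\cdot)$ to an arbitrary smaller radius in parts \eqref{2018_01_28_Sect2_a} and \eqref{2018_01_28_Sect2_b}; this is the expected main obstacle, but it is well known and can be cited from the standard references (\cite{Wh}, \cite[p.\,76--77]{dC}), so no genuinely new work is needed.
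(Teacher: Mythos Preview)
Your proposal is correct and is precisely the natural way to spell out the details; the paper itself does not give a proof but simply records the lemma as ``a direct consequence of Theorem~\ref{Whitehead},'' so your argument (Whitehead at $F(p)$, Whitehead at $p$, then shrink $r(p)$ using the local Lipschitz constant and continuity) is exactly what is intended. Your flagging of the subball-inheritance issue is appropriate: the standard proofs in \cite{Wh}, \cite[pp.~76--77]{dC} in fact yield strong convexity for \emph{all} radii below the Whitehead bound, so this step is indeed routine.
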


\medskip

Using parallel transport we intrinsically define 
the generalized differential for $F$: 

\begin{definition}\label{2018_02_12_def2.1}
We will use the following notation. If there exists a unique geodesic between $x,y\in M$, then denote parallel transport 
along that geodesic by $\tau_y^x : T_x M \rightarrow T_y M$.
\end{definition}

For each $x \in M$ let 
$\cL (T_x M, T_{F(x)}N)$ be 
the set of all linear mappings of $T_x M$ to $T_{F(x)}N$. 
Since $\cL (T_x M, T_{F(x)}N)$ 
is isomorphic to the vector space $\cM (n, m; \R)$ of $n\times m$-matrices with real entries, $\cL (T_x M, T_{F(x)}N)$ is 
an $nm$-dimensional vector space. 
We topologize $\cL (T_x M, T_{F(x)}N)$ 
with the operator norm $\|\,\cdot\,\|$, so that, 
throughout this article, 
we regard $\cL (T_x M, T_{F(x)}N)$ as 
a finite dimensional normed vector space.\par  
Fix $p \in M$. Choose the two balls
$B_{r (p)}(p) \subset M$ and $B_{t (p)}(F(p))\subset N$
satisfying all three properties 
\eqref{2018_02_01_lem2.2_i}--\eqref{2018_02_01_lem2.2_iii}  
of Lemma \ref{2018_02_01_lem2.2}.
From Rademacher's theorem \cite{R} 
there is a set $E_F\subset M$ of Lebesgue 
measure zero such that the differential $dF$ 
of $F$ exists on $B_{r (p)}(p) \setminus E_F$. 
Since $F$ is Lipschitz on $B_{r(p)}(p)$, 
and since for $q\in B_{r(p)}(p)$ parallel transports 
$\tau_q^p$ and $\tau_{F(p)}^{F(q)}$ are 
linear isometries on $B_{r(p)}(p)$ and $B_{t(p)}(F(p))$, respectively, 
$\{\tau_{F(p)}^{F(q)}\circ dF_{q} \circ \tau_{q}^{p}\}_{q \in B_{r (p)}(p) \setminus E_F}$ is bounded in $\cL (T_p M, T_{F(p)}N)$. 
Since $B_{r (p)}(p) \setminus E_F$ is dense in $B_{r (p)}(p)$, there is 
a sequence $\{x_i\}_{i \in \N} \subset 
B_{r (p)}(p) \setminus E_F$ 
such that 
$\lim_{i \to \infty} x_i=p$ and 
$\{
\tau_{F(p)}^{F(x_i)}\circ dF_{x_i} \circ \tau_{x_i}^{p}
\}_{i \in \N}
$ 
converges in $\cL (T_p M, T_{F(p)}N)$. 
Hence we can introduce the notion of 
the ``mixture" of the differential of $F$ 
as follows: 

\begin{definition}{\rm (compare \cite{KT})}\label{2018_02_01_def2.3}
For each $p \in M$ we call the set 
\begin{equation}\label{2018_01_31_cpt}
K_F (p)
:=
\Big\{
G \in \cL (T_p M, T_{F(p)}N) \Bigm|
\begin{array}{l}
\exists \{x_i\}_{i \in \N} \subset 
B_{r (p)}(p) \setminus E_F \ \  \text{such that}\\
\displaystyle{\lim_{i \to \infty} x_i=p, \ \lim_{i \to \infty} \tau_{F(p)}^{F(x_i)}\circ dF_{x_i} \circ \tau_{x_i}^{p}= G}
\end{array}
\Big\}
\end{equation}
the {\em mixture of the differential of $F$ at $p$}. 
Note here that, from 
\eqref{2018_02_01_lem2.2_ii} 
of Lemma \ref{2018_02_01_lem2.2}, 
parallel transport 
$\tau_{F(p)}^{F(x_i)}: T_{F(x_i)}N\to T_{F(p)}N$ 
can be defined.
\end{definition}

\begin{remark}\label{2020_01_07_rem2.5}
By Definition \ref{2018_02_01_def2.3}, 
for any $p \in M$, $K_F (p)$ is a nonempty 
bounded set in $\cL (T_p M, T_{F(p)}N)$.
\end{remark}

The generalized differential for $F:M\to N$ 
is now intrinsically defined as follows: 

\begin{definition}\label{2018_01_09_def2.2}
For each $p \in M$ we call the set 
$\partial F (p):= \Conv (K_F (p))$ 
the {\em generalized differential of $F$ at $p$} 
where again $\Conv (K_F (p))$ denotes 
the convex hull of the mixture $K_F (p)$ 
of the differential of $F$ at $p$.
\end{definition}

\begin{remark}\label{2018_01_12_rem2.4} 
We give remarks on 
Definition \ref{2018_01_09_def2.2}: 
Fix $p \in M$. 
\begin{enumerate}[{\rm (i)}]
\item 
Clarke \cite{C2} originally called $\partial F (p)$ 
the {\it generalized Jacobian of $F$ at $p$} 
where $M$ and $N$ 
are Euclidean spaces 
of the same dimension $m$. 
This is because we can use the atlas 
$\{(\R^m, \id_{\R^m})\}$ with a single chart on $\R^m$ 
where $\id_{\R^m}:\R^m\to\R^m$ is the identity 
map, so without referring to independence of the choice of charts we can define $\partial F(p)$ as follows: 
\begin{equation}\label{2018_06_24_rem2.4_1} 
\partial F (p):=\Conv 
\Big(
\Big\{
A \in \cM (n, m; \R) \Bigm|
\begin{array}{l}
\exists \{x_i\}_{i \in \N} \subset 
\R^m \setminus E_F \ \  \text{such that}\\
\displaystyle{\lim_{i \to \infty} x_i=p, \ \lim_{i \to \infty} (JF)_{x_i}= A}
\end{array}
\Big\}
\Big)
\end{equation}
where $(JF)_{x_i}$ 
indicates the Jacobian matrix of $F$ at $x_i$.
\item 
From \cite[Theorem 17.2]{Ro}, $\partial F (p)$ is 
a compact convex subset of $\cL (T_p M, T_{F(p)}N)$. 
\item Although the following fact was mentioned 
in Section \ref{2020_09_12_sub2.1} (ii), we will 
mention it again: \cite[Theorem 2.1]{Ro} shows that 
$\partial F(p)$ is the smallest convex set containing 
$K_F(p)$, i.e., 
\[
\partial F(p) = \Conv (K_F(p))
= \bigcap \{Z\,|\, Z \ \text{is convex in} \ 
\cL (T_pM, T_{F(p)}N) \ \text{with}\ K_F(p) \subset Z\}.
\]
\item Since $\dim \cL (T_pM, T_{F(p)}N) = nm$, 
it follows from Carath\'eodory's theorem 
(cf.\,\cite[Theorem 1.1.4]{Sch}) that 
for any $g \in \partial F (p)$ there are 
$G_1, G_2, \dots, G_{nm+1} \in K_F (p)$ such that 
$g= \sum_{i=1}^{nm+1}a_i G_i$ 
where $\sum_{i=1}^{nm+1}a_i =1$ and $a_i \ge 0$ 
($i =1,2,\dots, nm+1$).
\item If $dF_p$ exists, 
then $dF_p \in \partial F(p)$. Moreover, if $F$ is 
of class $C^1$ on $B_{r(p)}(p)$, 
then $\partial F(p)$ is a singleton, 
which means $\partial F(p)=  \{dF_p\}$.
\item As a direct consequence of 
Definition \ref{2018_01_09_def2.2}
we observe that for any $\ve >0$ there is a constant 
$\mu (p, \ve)\in (0, r(p))$ such that 
$\tau^{F(x)}_{F(p)}\circ \partial F(x) \circ \tau^p_x \subset \cU_\ve (\partial F(p))$ 
for all $x \in B_{\mu (p,\,\ve)} (p)$ where 
$\tau^{F(x)}_{F(p)}\circ \partial F(x) \circ \tau^p_x 
:=\{\tau^{F(x)}_{F(p)}\circ g \circ \tau^p_x\,|\, g \in \partial F(x)\}$ 
and $\cU_\ve (\partial F(p))$ denotes the $\ve$-open neighborhood of $\partial F(p)$ 
in $\cL (T_pM, T_{F(p)}N)$.
\end{enumerate}
\end{remark}

Now that we have defined the generalized differential 
for Lipschitz maps between Riemannian manifolds, 
it is time to intrinsically define their singular points: 

\begin{definition}\label{2018_01_10_def2.7}
A point $p \in M$ is said 
to be {\it nonsingular for $F$} (or a {\em nonsingular point of $F$}) {\em in the sense of Clarke} 
if every element in $\partial F(p)$ is of maximal rank, 
i.e., for any $g \in \partial F (p)$, 
$\ra (g) = \min \{m, n\}$. 
\end{definition}

\begin{remark}
Clarke \cite{C2} first introduced the notion of singular points of 
Lipschitz maps between Euclidean spaces 
of the same dimension. Using this notion 
he extended 
the inverse function theorem for smooth maps between 
Euclidean spaces to Lipschitz ones, see \cite[Theorem 1]{C2}. 
\end{remark}

\subsection{The relationship between the 
generalized differential and the generalized 
gradient of Lipschitz functions}\label{2018_sec2.2}

In this subsection 
we define the generalized gradient of 
Lipschitz functions on Riemannian manifolds 
and study the relationship between their generalized gradient 
and their generalized differential. 
Throughout this subsection 
let $M$ be a Riemannian manifold of dimension $m$ 
with Riemannian metric $\langle \, \cdot\,, \cdot\, \rangle$, 
$F: M\to \R$ a locally Lipschitz function, and $t$ the standard coordinate on $\R$.\par 
Fix $p \in M$. By Theorem \ref{Whitehead} 
there is a strongly convex open ball 
$B_{r (p)}(p)$ such that 
$\exp_p|_{\B_{r(p)}(o_p)}: \B_{r(p)}(o_p) \to B_{r (p)}(p)$ 
is a diffeomorphism. For each $x \in B_{r (p)}(p)$ 
parallel transport $\tau^p_{x}:T_p M \to T_xM$ is 
defined as in Definition \ref{2018_02_12_def2.1}. 
Note that parallel transport 
$\tau^x_{y}:T_x \R \to T_y\R$ is defined for all $x,y \in \R$. 
Let $K_F(p)$ be the mixture of the differential of $F$ 
at $p$ defined by Eq.\,\eqref{2018_01_31_cpt} 
for $T_{F(p)}N=T_{F(p)}\R$, and $E_F$ a set of Lebesgue measure zero such that $dF$ exists on $B_{r (p)}(p) \setminus E_F$. 
The gradient vector field of $F$ denoted by $\grad F$ 
can be defined on $B_{r (p)}(p) \setminus E_F$ 
because $F$ is differentiable there. Note that 
\begin{equation}\label{2020_01_09_grad}
\langle (\grad F)_x, u \rangle 
\cdot \frac{d}{dt}\bigg|_{F(x)}
= u(F)\cdot \frac{d}{dt}\bigg|_{F(x)}
= dF_x(u)
\end{equation}
for all $x \in B_{r (p)}(p) \setminus E_F$ and $u \in T_xM$.

\begin{lemma}\label{2018_07_08_lem2.17}
For any $G \in K_F (p)$ there is 
a sequence 
$\{x_i\}_{i\in\N} \subset B_{r(p)}(p)\setminus E_F$ 
such that $\lim_{i\to\infty}x_i=p$, 
$\lim_{i\to\infty} \tau_{F(p)}^{F(x_i)}\circ dF_{x_i}\circ \tau_{x_i}^p =G$, and 
\[
G(v)
= \lim_{i \to \infty} 
\langle
(\grad F)_{x_i}, \tau_{x_i}^p(v)
\rangle 
\frac{d}{dt}\bigg|_{F(p)} \qquad (v \in T_pM).
\]
\end{lemma}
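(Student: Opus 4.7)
The plan is to unpack Definition \ref{2018_02_01_def2.3} of $K_F(p)$ and rewrite each $dF_{x_i}$ in terms of the gradient via \eqref{2020_01_09_grad}, then pass to the limit using the triviality of parallel translation on $\R$ applied to the standard coordinate vector field $d/dt$.

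First, I would appeal directly to Definition \ref{2018_02_01_def2.3}: since $G \in K_F(p)$, there exists a sequence $\{x_i\}_{i\in\N} \subset B_{r(p)}(p) \setminus E_F$ with $\lim_{i\to\infty} x_i = p$ and $\lim_{i\to\infty} \tau_{F(p)}^{F(x_i)} \circ dF_{x_i} \circ \tau_{x_i}^p = G$ in the operator norm of $\cL(T_pM, T_{F(p)}\R)$. This already gives the first two assertions of the lemma and provides the candidate sequence for the third.

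Next, fix $v \in T_pM$. For each $i$, since $x_i \notin E_F$ and $\tau_{x_i}^p(v) \in T_{x_i}M$ is well-defined by Theorem \ref{Whitehead}, applying the defining identity \eqref{2020_01_09_grad} with $u = \tau_{x_i}^p(v)$ yields
\[
dF_{x_i}\bigl(\tau_{x_i}^p(v)\bigr)
= \bigl\langle (\grad F)_{x_i}, \tau_{x_i}^p(v)\bigr\rangle \frac{d}{dt}\bigg|_{F(x_i)}.
\]
Then I would observe that the parallel translation $\tau_{F(p)}^{F(x_i)}: T_{F(x_i)}\R \to T_{F(p)}\R$ along the unique minimal geodesic in $\R$ (a straight segment) sends the standard coordinate vector $d/dt|_{F(x_i)}$ to $d/dt|_{F(p)}$, because $\partial/\partial t$ is a global parallel vector field on the Euclidean line. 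Combining these two observations gives
\[
\bigl(\tau_{F(p)}^{F(x_i)} \circ dF_{x_i} \circ \tau_{x_i}^p\bigr)(v)
= \bigl\langle (\grad F)_{x_i}, \tau_{x_i}^p(v)\bigr\rangle \frac{d}{dt}\bigg|_{F(p)}
\]
for every $i$.

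Finally, operator-norm convergence of $\tau_{F(p)}^{F(x_i)} \circ dF_{x_i} \circ \tau_{x_i}^p$ to $G$ implies pointwise convergence at $v$, so letting $i \to \infty$ on the left-hand side gives $G(v)$ and on the right-hand side yields the stated formula. No serious obstacle arises; the only subtle point is the triviality of parallel transport of $d/dt$ along geodesics of $\R$, which is immediate from the fact that $\R$ carries the flat Euclidean connection and $\partial/\partial t$ is globally parallel.
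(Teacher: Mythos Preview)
Your proof is correct and follows essentially the same route as the paper's own proof: both extract the sequence from Definition \ref{2018_02_01_def2.3}, rewrite $dF_{x_i}(\tau_{x_i}^p(v))$ via the gradient identity \eqref{2020_01_09_grad}, use that parallel translation on $\R$ carries $d/dt|_{F(x_i)}$ to $d/dt|_{F(p)}$, and then pass to the limit. Your version is slightly more explicit about why parallel transport on $\R$ is trivial, but there is no substantive difference.
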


\begin{proof}Fix $G \in K_F(p)$. The definition of 
$K_F(p)$ gives a sequence 
$\{x_i\}_{i\in\N} \subset B_{r(p)}(p)\setminus E_F$ 
satisfying 
$\lim_{i\to\infty}x_i=p$ 
and 
$\lim_{i\to\infty} \tau_{F(p)}^{F(x_i)}\circ dF_{x_i}\circ \tau_{x_i}^p =G$. For any $v \in T_pM$, 
Eq.\,\eqref{2020_01_09_grad} gives 
\begin{align}\label{2018_07_08_lem2.17_1}
G(v)
&=
\Big(
\lim_{i\to \infty}
\tau_{F(p)}^{F(x_i)}\circ dF_{x_i}\circ \tau_{x_i}^p
\Big)
(v)
=
\lim_{i \to \infty}
\tau^{F(x_i)}_{F(p)} 
\Big(
\tau_{x_i}^p(v) (F)\frac{d}{dt}\bigg|_{F(x_i)}
\Big)
\\
&=
\lim_{i \to \infty}
\tau_{x_i}^p(v) (F)
\frac{d}{dt}\bigg|_{F(p)}
=
\lim_{i \to \infty} 
\langle
(\grad F)_{x_i}, \tau_{x_i}^p(v)
\rangle 
\frac{d}{dt}\bigg|_{F(p)}.\notag
\end{align}
$\qedd$
\end{proof}

\begin{definition}\label{2020_01_09_def2.12}
Eq.\,\eqref{2018_07_08_lem2.17_1} defines 
$\lim_{i \to \infty} (\grad F)_{x_i}$, that is, 
for each $G \in K_F (p)$ let 
\[
\langle
\lim_{i \to \infty}(\grad F)_{x_i}, v
\rangle 
\frac{d}{dt}\bigg|_{F(p)}
:= G(v) \qquad (v \in T_pM).
\]
\end{definition}

The following lemma follows directly 
from Lemma \ref{2018_07_08_lem2.17} together with Definition \ref{2020_01_09_def2.12}.

\begin{lemma}\label{2020_01_09_lem2.13}
The 
set 
\[
\divi_F(p):=
\Big\{
w \in T_p M \Bigm|
\begin{array}{l}
\exists \{x_i\}_{i \in \N} \subset 
B_{r (p)}(p) \setminus E_F \ \  \text{such that}\\
\displaystyle{\lim_{i \to \infty} x_i=p, \ 
\lim_{i \to \infty} (\grad F)_{x_i}= w}
\end{array}
\Big\}
\]
is nonempty.
\end{lemma}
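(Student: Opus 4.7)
The plan is to observe that this lemma is essentially a corollary of Remark~2.5 (which guarantees $K_F(p) \neq \emptyset$) together with Lemma~2.17 and Definition~2.12. The overall strategy: start from any element of $K_F(p)$, extract the sequence $\{x_i\}$ provided by Lemma~2.17, and then use Riesz representation in $T_pM$ to manufacture the required limit vector $w$.

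First, by Remark~2.5 the set $K_F(p)$ is nonempty, so I would pick any $G \in K_F(p)$. By Lemma~2.17 there is a sequence $\{x_i\}_{i\in\N} \subset B_{r(p)}(p)\setminus E_F$ with $x_i \to p$, $\tau^{F(x_i)}_{F(p)} \circ dF_{x_i}\circ \tau^p_{x_i} \to G$, and the identity
\[
G(v) \;=\; \lim_{i\to\infty} \langle (\grad F)_{x_i},\, \tau^p_{x_i}(v) \rangle \,\frac{d}{dt}\bigg|_{F(p)}, \qquad v \in T_pM.
\]
Since $T_{F(p)}\R$ is spanned by $\frac{d}{dt}|_{F(p)}$, the map $G$ corresponds to a unique linear functional on $T_pM$, and Riesz representation in the finite-dimensional inner product space $T_pM$ produces a unique vector $w \in T_pM$ satisfying $\langle w, v\rangle \,\frac{d}{dt}|_{F(p)} = G(v)$ for every $v \in T_pM$. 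By Definition~2.12 this $w$ is precisely what we mean by $\lim_{i\to\infty}(\grad F)_{x_i}$.

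To conclude, I would verify that $w$ so defined fulfils the requirement in the definition of $\divi_F(p)$. Since $\tau^p_{x_i}$ is a linear isometry with inverse $\tau^{x_i}_p$, the identity above rewrites as
\[
\langle w, v\rangle \;=\; \lim_{i\to\infty}\langle \tau^{x_i}_p (\grad F)_{x_i},\, v\rangle, \qquad v \in T_pM,
\]
i.e.\ the vectors $\tau^{x_i}_p (\grad F)_{x_i} \in T_pM$ converge componentwise (hence, by finite-dimensionality, in norm) to $w$. Thus the sequence $\{x_i\}$ witnesses $w \in \divi_F(p)$, and the set is nonempty.

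The only mildly delicate point — and the one worth flagging as the potential obstacle — is interpreting $\lim_{i\to\infty}(\grad F)_{x_i}$ properly: these vectors live in different tangent spaces $T_{x_i}M$, so without parallel transporting them back to $T_pM$ the limit is meaningless. Definition~2.12 has already fixed this convention through $G$, so what really needs to be checked is consistency: that the intrinsically defined $w$ (via Riesz) coincides with the limit of the transported gradients. This is immediate from Lemma~2.17 and the isometric nature of parallel translation, so no real obstruction arises.
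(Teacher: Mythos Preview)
Your proposal is correct and follows exactly the route the paper indicates: the paper simply states that the lemma ``follows directly from Lemma~\ref{2018_07_08_lem2.17} together with Definition~\ref{2020_01_09_def2.12},'' and you have spelled out precisely that deduction (with the implicit use of Remark~\ref{2020_01_07_rem2.5} to start from some $G\in K_F(p)$). Your closing remark about interpreting $\lim_{i\to\infty}(\grad F)_{x_i}$ via parallel transport back to $T_pM$ is a worthwhile clarification of what Definition~\ref{2020_01_09_def2.12} is really encoding, but it does not diverge from the paper's argument.
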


\begin{definition}\label{2018_07_08_def2.18}
We call $\divi_F(p)$ 
the {\em mixture of the gradient} of $F$ at $p$, and 
the convex set $\cdas_F(p):= \Conv (\divi_F(p))$ 
the {\em generalized gradient} of $F$ at $p$. 
\end{definition}

\begin{remark}\label{2020_01_13_rem2.15} 
$\cdas_F (p)$ 
is compact in $T_p M$ by \cite[Theorem 17.2]{Ro}. 
Note that Clarke \cite{C1} first defined 
the generalized gradient of Lipschitz functions 
on $\R^m$.
\end{remark}

\begin{lemma}\label{2018_07_08_lem2.20}
Let $\partial F(p)$ be 
the generalized differential of $F$ at $p$. Then  
for any $g \in \partial F(p)$ there is a vector 
$X^{(g)} \in \cdas_F(p)$ such that 
$g(v) = \langle X^{(g)}, v \rangle \cdot d/dt|_{F(p)}$ 
for all $v \in T_pM$.
\end{lemma}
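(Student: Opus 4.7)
The plan is to leverage the convex-hull structure on both sides: $\partial F(p)$ is the convex hull of $K_F(p)$, while $\cdas_F(p)$ is the convex hull of $\divi_F(p)$, and Lemma \ref{2018_07_08_lem2.17} (with Definition \ref{2020_01_09_def2.12}) already provides the desired Riesz-type correspondence element-by-element on the generators $K_F(p) \to \divi_F(p)$. The task is then to extend this pointwise correspondence convexly to the whole $\partial F(p)$.

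First, since $N = \R$ in this setting, $\cL(T_pM, T_{F(p)}\R)$ is $m$-dimensional, so by Carathéodory's theorem (see Remark \ref{2018_01_12_rem2.4}(iv) applied with $n=1$), any $g \in \partial F(p) = \Conv(K_F(p))$ can be written as $g = \sum_{i=1}^{m+1} a_i G_i$ with $G_i \in K_F(p)$, $a_i \ge 0$, and $\sum_{i=1}^{m+1} a_i = 1$. For each such $G_i$, Lemma \ref{2018_07_08_lem2.17} produces a sequence $\{x_k^{(i)}\}_{k\in\N} \subset B_{r(p)}(p)\setminus E_F$ with $\lim_{k\to\infty} x_k^{(i)}=p$ and $\lim_{k\to\infty} \tau_{F(p)}^{F(x_k^{(i)})}\circ dF_{x_k^{(i)}}\circ \tau_{x_k^{(i)}}^p = G_i$; Definition \ref{2020_01_09_def2.12} then gives a vector $X^{(G_i)} := \lim_{k\to\infty}(\grad F)_{x_k^{(i)}} \in \divi_F(p)$ characterized by
\[
G_i(v) = \langle X^{(G_i)}, v\rangle \, \frac{d}{dt}\bigg|_{F(p)}, \qquad v \in T_pM.
\]

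Second, I set
\[
X^{(g)} := \sum_{i=1}^{m+1} a_i X^{(G_i)}.
\]
By construction $X^{(g)}$ is a convex combination of elements of $\divi_F(p)$, hence $X^{(g)} \in \Conv(\divi_F(p)) = \cdas_F(p)$. Using linearity of $g$ and of the Riemannian inner product in the first slot, together with the identity above, I compute for each $v \in T_pM$:
\[
g(v) = \sum_{i=1}^{m+1} a_i\, G_i(v) = \sum_{i=1}^{m+1} a_i \langle X^{(G_i)}, v\rangle \, \frac{d}{dt}\bigg|_{F(p)} = \Big\langle \sum_{i=1}^{m+1} a_i X^{(G_i)}, v\Big\rangle \frac{d}{dt}\bigg|_{F(p)} = \langle X^{(g)}, v\rangle \frac{d}{dt}\bigg|_{F(p)},
\]
which is the claim.

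The argument is essentially bookkeeping, so there is no serious obstacle. The one subtle point worth checking is that Definition \ref{2020_01_09_def2.12} does not a priori assert that the limit $\lim_{k\to\infty}(\grad F)_{x_k^{(i)}}$ exists as an honest vector in $T_pM$ independently of the defining equation; however, Lemma \ref{2020_01_09_lem2.13} together with the computation in the proof of Lemma \ref{2018_07_08_lem2.17} (using that $\tau_{x_k}^p$ is an isometry and $F$ is Lipschitz, so the gradients are bounded) guarantees that the vector $X^{(G_i)}$ is well defined and lies in $\divi_F(p)$. Once that is in place, the proof reduces to the one-line convex-combination calculation above.
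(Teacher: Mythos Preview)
Your proof is correct and follows essentially the same route as the paper's: both apply Carath\'eodory's theorem to write $g$ as a convex combination of elements $G_i \in K_F(p)$, invoke Lemma~\ref{2018_07_08_lem2.17} with Definition~\ref{2020_01_09_def2.12} (and Lemma~\ref{2020_01_09_lem2.13}) to associate to each $G_i$ a vector in $\divi_F(p)$, and then take the corresponding convex combination in $\cdas_F(p)$. The only difference is notational.
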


\begin{proof}Fix $g \in \partial F(p)$. 
By Carath\'eodory's theorem (cf.\,\cite[Theorem 1.1.4]{Sch}) 
there are vectors $G_1, G_2, \ldots, G_{m+1} \in K_F(p)$ 
such that $g=\sum_{k=1}^{m+1} a_k G_k$ where 
$\sum_{k=1}^{m+1} a_k=1$
and 
$a_k \ge 0$ ($k =1,2,\ldots, m +1$). By Lemmas 
\ref{2018_07_08_lem2.17} 
and \ref{2020_01_09_lem2.13} for each 
$k =1,2,\ldots, m +1$ 
there is a vector $w^{(k)} \in \divi_F(p)$ 
such that 
$
G_k (v)
= \langle w^{(k)}, v \rangle (d/dt)|_{F(p)}
$ 
for all $v \in T_pM$, and hence 
\[
g(v)
=
\sum_{k=1}^{m +1} a_k G_k (v)
=
\sum_{k=1}^{m +1} a_k \langle w^{(k)}, v \rangle \frac{d}{dt}\bigg|_{F(p)}
=
\Big
\langle \sum_{k=1}^{m +1} a_k w^{(k)}, v 
\Big\rangle \frac{d}{dt}\bigg|_{F(p)}.
\]
Since $\sum_{k=1}^{m +1} a_k w^{(k)} \in \cdas_F(p)$, 
$X^{(g)}:= \sum_{k=1}^{m +1} a_k w^{(k)}$ is 
the desired vector.
$\qedd$
\end{proof}

\begin{lemma}\label{2018_07_08_lem2.21}
$p$ is singular for $F$ if and only if $o_p \in \cdas_F(p)$. 
\end{lemma}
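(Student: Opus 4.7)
The key observation is that the codomain is one-dimensional, so for any $g \in \cL(T_p M, T_{F(p)}\R)$ one has $\ra(g)=1$ precisely when $g \neq 0$. Hence Definition \ref{2018_01_10_def2.7} reduces, in the function case, to the statement that $p$ is singular for $F$ if and only if the zero map $0 \in \partial F(p)$. The plan is therefore to establish the equivalence $0 \in \partial F(p) \Longleftrightarrow o_p \in \cdas_F(p)$, using Lemma \ref{2018_07_08_lem2.20} for one direction and a subsequence extraction of Lemma \ref{2018_07_08_lem2.17}'s computation for the other.

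For the forward direction, suppose $p$ is singular, so $0 \in \partial F(p)$. Applying Lemma \ref{2018_07_08_lem2.20} to $g = 0$ yields a vector $X^{(0)} \in \cdas_F(p)$ with $0 = g(v) = \langle X^{(0)}, v \rangle \cdot d/dt|_{F(p)}$ for every $v \in T_pM$. Nondegeneracy of the Riemannian inner product forces $X^{(0)} = o_p$, so $o_p \in \cdas_F(p)$.

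For the converse, assume $o_p \in \cdas_F(p) = \Conv(\divi_F(p))$. Since $\dim T_pM = m$, Carath\'eodory's theorem (cf.\,Remark \ref{2018_01_12_rem2.4} (iv)) gives $w^{(1)},\dots, w^{(m+1)} \in \divi_F(p)$ and nonnegative reals $a_1,\dots,a_{m+1}$ with $\sum_k a_k = 1$ such that $o_p = \sum_{k=1}^{m+1} a_k w^{(k)}$. By definition of $\divi_F(p)$, for each $k$ there is a sequence $\{x_i^{(k)}\}_{i} \subset B_{r(p)}(p)\setminus E_F$ with $x_i^{(k)} \to p$ and $(\grad F)_{x_i^{(k)}} \to w^{(k)}$. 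Because $F$ is Lipschitz on $B_{r(p)}(p)$ and the parallel translations are isometries, the sequence $\{\tau_{F(p)}^{F(x_i^{(k)})}\circ dF_{x_i^{(k)}}\circ \tau_{x_i^{(k)}}^p\}_i$ is bounded in the finite-dimensional normed space $\cL(T_pM, T_{F(p)}\R)$, hence a subsequence converges to some $G_k \in K_F(p)$. Repeating the computation in Lemma \ref{2018_07_08_lem2.17} along this subsequence (and using the continuity of $\tau_{x_i^{(k)}}^p$ in $i$ together with the Riesz-type identification in Definition \ref{2020_01_09_def2.12}) gives $G_k(v) = \langle w^{(k)}, v\rangle\,d/dt|_{F(p)}$ for all $v \in T_pM$. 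Setting $g := \sum_{k=1}^{m+1} a_k G_k$ produces an element of $\partial F(p) = \Conv(K_F(p))$, and by linearity
\[
g(v) = \Big\langle \sum_{k=1}^{m+1} a_k w^{(k)},\, v \Big\rangle \frac{d}{dt}\bigg|_{F(p)} = \langle o_p, v\rangle \frac{d}{dt}\bigg|_{F(p)} = 0
\]
for every $v \in T_pM$. Thus $0 \in \partial F(p)$, showing $p$ is singular for $F$.

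The only non-routine step is the subsequence extraction in the converse direction: one must go from a sequence witnessing $w^{(k)} \in \divi_F(p)$ to a genuine element $G_k \in K_F(p)$ whose image under the Riesz correspondence is $w^{(k)}$. This works cleanly because $\cL(T_pM, T_{F(p)}\R)$ is finite-dimensional and the relevant sequence is bounded by the Lipschitz constant of $F$, so compactness delivers the needed limit without any additional regularity hypothesis on $F$.
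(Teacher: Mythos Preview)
Your proof is correct and follows essentially the same route as the paper: Lemma \ref{2018_07_08_lem2.20} for the forward direction, and Carath\'eodory plus the Riesz-type correspondence between $\divi_F(p)$ and $K_F(p)$ for the converse. The paper is terser in the converse (it simply asserts ``We observe $f_0 \in \partial F(p)$''), whereas you spell out why each $G_k \in K_F(p)$; note, incidentally, that the subsequence extraction is not actually needed---the computation in Eq.\,\eqref{2018_07_08_lem2.17_1} shows the full sequence $\tau_{F(p)}^{F(x_i^{(k)})}\circ dF_{x_i^{(k)}}\circ \tau_{x_i^{(k)}}^p$ already converges pointwise (hence in norm, in finite dimensions) to the map $v \mapsto \langle w^{(k)}, v\rangle\, d/dt|_{F(p)}$.
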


\begin{proof}
We first assume that $p$ is singular for $F$. 
There is then a point $g_0 \in \partial F(p)$ such that 
$\ra (g_0) = 0$. 
By Lemma \ref{2018_07_08_lem2.20} 
there is a vector $X^{(0)} \in \cdas_F (p)$ such that 
$g_0(v) = \langle X^{(0)}, v \rangle (d/dt)|_{F(p)}$ 
holds for all $v \in T_pM$. Since $\ra (g_0) = 0$, 
$o_{F(p)} =\langle X^{(0)}, v \rangle \cdot d/dt |_{F(p)}$ 
for all $v \in T_pM$ where $o_{F(p)}$ indicates 
the origin of $T_{F(p)}\R$, hence $X^{(0)} =o_p$, and finally 
$o_p= X^{(0)} \in \cdas_F(p)$.\par 
We next assume $o_p \in \cdas_F(p)$. 
Carath\'eodory's theorem (cf.\,\cite[Theorem 1.1.4]{Sch}) 
shows that there are vectors 
$w_1,w_2,\ldots,w_{m+1} \in \divi_F(p)$ 
such that 
$o_p = \sum_{k=1}^{m+1} \alpha_k w_k$ 
where $\sum_{k=1}^{m+1} \alpha_k =1$ and 
$\alpha_k \ge 0$ ($k= 1, 2, \ldots, m+1$). 
Define the linear map $f_0: T_pM\to T_{F(p)}\R$ by 
$
f_0(v)
:=
\langle 
\sum_{k=1}^{m+1} \alpha_k w_k, v
\rangle 
\cdot d / dt |_{F(p)}
$
for all $v \in T_p M$. 
We observe $f_0 \in \partial F(p)$. 
Since $f_0(v)= o_{F(p)}$ for all $v \in T_pM$, 
$\ra (f_0)=0$, and hence 
$p$ is singular for $F$.$\qedd$
\end{proof}

\begin{remark}
In \cite{C1} and \cite{KT} a point $p \in M$ 
is called 
{\em noncritical for $F$} if $o_p \not\in \cdas_F(p)$.\end{remark}

\subsection{Critical points of distance functions in the sense of Grove--Shiohama are singular points of Clarke.}\label{2018_sec2.4}

Throughout this subsection let $M$ be a complete Riemannian manifold of dimension $m$ with Riemannian metric 
$\langle\,\cdot\,, \,\cdot\, \rangle$ and 
the distance function $d$. All geodesics will be normal.\par 
Fix $p \in M$. Define the map $d_p:M \to \R$ by 
$d_p (x):= d(p, x)$ for all $x \in M$. 
We then have the following proposition. Note that 
the proposition appeared as \cite[Example 1.9]{KT}; however we are sometimes asked the proof, so that we 
give the details here.

\begin{proposition}\label{2020_01_10_prop2.19}
$q \in M$ is singular for $d_p$ in the sense of Clarke 
if and only if $q$ 
is critical for $d_p$ in that 
of Grove--Shiohama.
\end{proposition}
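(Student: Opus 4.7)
The plan is to identify the generalized gradient $\cdas_{d_p}(q)$ explicitly and then match it against a convex-geometric reformulation of Grove--Shiohama criticality; the bridge between the two sides is Lemma \ref{2018_07_08_lem2.21}, which asserts that $q$ is singular for $d_p$ in the sense of Clarke iff $o_q \in \cdas_{d_p}(q)$.

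First, I would describe $\divi_{d_p}(q)$ concretely. Recall that $d_p$ is smooth on $M \setminus (\{p\} \cup \Cut(p))$, and at any such point $x$ one has $(\grad d_p)_x = \dot\gamma_x(d_p(x))$, where $\gamma_x$ denotes the unique unit speed minimal geodesic from $p$ to $x$. Let $\Gamma^+(q)$ be the set of terminal unit velocities $\dot\gamma(d_p(q))$ taken over all unit speed minimal geodesics $\gamma:[0,d_p(q)]\lra M$ from $p$ to $q$. I would show $\divi_{d_p}(q) = \Gamma^+(q)$. The inclusion $\Gamma^+(q) \subset \divi_{d_p}(q)$ follows by putting $x_i := \gamma(d_p(q) - 1/i)$, which lies off $\Cut(p)$ for large $i$ and satisfies $(\grad d_p)_{x_i} = \dot\gamma(d_p(q)-1/i) \to \dot\gamma(d_p(q))$. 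For the reverse inclusion, given a sequence $x_i \to q$ through the differentiability set with $(\grad d_p)_{x_i} \to w$, an Arzel\`a--Ascoli argument on the family $\{\gamma_{x_i}\}$ produces a subsequential limit which is a unit speed minimal geodesic from $p$ to $q$ whose terminal velocity is exactly $w$, so $w \in \Gamma^+(q)$.

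Next I would recast the Grove--Shiohama condition. Let $\Gamma^-(q) := -\Gamma^+(q)$; this is precisely the set of vectors appearing in Definition \ref{2018_07_01_def_GS}. By construction $q$ fails to be critical in the sense of Grove--Shiohama iff there is $v \in T_qM \setminus \{o_q\}$ with $\langle w, v\rangle < 0$ for every $w \in \Gamma^-(q)$, i.e., iff $\Gamma^-(q)$ lies in an open half-space through the origin. Since $\Gamma^-(q)$ is compact (via the same Arzel\`a--Ascoli compactness on the space of minimal geodesics from $p$ to $q$), standard convex separation makes this last condition equivalent to $o_q \notin \Conv(\Gamma^-(q))$.

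Combining these two inputs, the identification of step one gives $\cdas_{d_p}(q) = \Conv(\Gamma^+(q)) = -\Conv(\Gamma^-(q))$, so $o_q \in \cdas_{d_p}(q)$ iff $o_q \in \Conv(\Gamma^-(q))$, and Lemma \ref{2018_07_08_lem2.21} closes the loop. The main obstacle is the reverse inclusion $\divi_{d_p}(q) \subset \Gamma^+(q)$: one must show that whenever $x_i$ approaches $q$ through the differentiability set, the associated minimal geodesics subconverge to a bona fide minimal geodesic from $p$ to $q$ and that the limit of $(\grad d_p)_{x_i}$ is its terminal velocity. This hinges on continuity of $d$, compactness of the unit sphere in $T_pM$, and the fact that cut points are limits of points where $d_p$ is smooth.
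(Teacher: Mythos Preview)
Your argument for $q\neq p$ is essentially correct and, in fact, a bit more streamlined than the paper's. The paper does not establish the identification $\divi_{d_p}(q)=\Gamma^+(q)$ up front; instead it argues each implication separately (Lemma~\ref{2018_07_09_lem2.27}), and in the backward direction it invokes an external result \cite[Lemma~2.1]{IT} to pass from the Grove--Shiohama geodesic $\sigma^{(v)}$ to the limit geodesic $\eta_\infty$ obtained from the approximating sequence. Your route---first pinning down $\divi_{d_p}(q)$ as exactly the set of terminal velocities, then invoking convex separation once---makes that external citation unnecessary and packages the equivalence more symmetrically.

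There is, however, a genuine gap: you do not treat the case $q=p$. The proposition is stated for all $q\in M$, and in Definition~\ref{2018_07_01_def_GS} the point $p$ itself is declared critical for $d_p$ by convention; so you must still verify that $p$ is singular in the Clarke sense. Your identification $\divi_{d_p}(q)=\Gamma^+(q)$ breaks down at $q=p$, since there is no unit-speed minimal geodesic from $p$ to $p$ and hence $\Gamma^+(p)=\emptyset$, whereas $\divi_{d_p}(p)$ is certainly nonempty. The paper handles this separately (Lemma~\ref{2018_07_11_lem2.28}) by showing directly that $\divi_{d_p}(p)=\Sph_p^{m-1}$: approaching $p$ radially along $\exp_p(tv)$ for each $v\in\Sph_p^{m-1}$ gives $(\grad d_p)_{\exp_p(tv)}\to v$, and the reverse inclusion is immediate since all gradients of $d_p$ near $p$ are unit. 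Then $\cdas_{d_p}(p)=\Conv(\Sph_p^{m-1})$ is the closed unit ball, which contains $o_p$, and Lemma~\ref{2018_07_08_lem2.21} finishes. You should add this as a short separate paragraph.
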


\begin{proof}
For each $x \in M$ let $B_{r(x)}(x)$ be 
a strongly convex open ball, 
guaranteed by Theorem \ref{Whitehead}, 
such that   
$\exp_x|_{\B_{r(x)}(o_x)}$ is a diffeomorphism, and 
$E_{d_p}$ a set of Lebesgue measure zero 
such that the differential of $d_p$ exists on 
$M \setminus E_{d_p}$. 
This proposition follows from Lemmas \ref{2018_07_09_lem2.27} and \ref{2018_07_11_lem2.28} 
below.$\qedd$
\end{proof}

\begin{lemma}\label{2018_07_09_lem2.27}
$q \in M\setminus \{p\}$ is singular for $d_p$ in the sense of Clarke 
if and only if $q$ 
is critical for $d_p$ in that 
of Grove--Shiohama.
\end{lemma}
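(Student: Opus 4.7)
The plan is to reduce, via Lemma \ref{2018_07_08_lem2.21}, to proving that $o_q\in\cdas_{d_p}(q)$ if and only if $q$ satisfies the Grove--Shiohama criticality condition, and then to identify the generalized gradient $\cdas_{d_p}(q)$ explicitly with the convex hull of
\[
\Xi_q:=\{\dot\gamma(d_p(q))\in T_qM \,|\, \gamma \text{ is a unit speed minimal geodesic from } p \text{ to } q\}.
\]
Once this identification is in hand, a standard separation argument closes the equivalence: because $\Xi_q=\{\xi\in T_qM\,|\,\|\xi\|=1,\ \exp_q(-d_p(q)\xi)=p\}$ is a closed subset of the unit sphere, it is compact, and hence $\Conv(\Xi_q)$ is a closed convex subset of $T_qM$; consequently $o_q\in\Conv(\Xi_q)$ holds if and only if there is no $v\in T_qM\setminus\{o_q\}$ with $\langle\xi,v\rangle>0$ for every $\xi\in\Xi_q$, which, by rewriting $\langle\xi,v\rangle>0$ as $\angle(-\xi,v)>\pi/2$, is exactly the negation of the Grove--Shiohama condition.

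The key technical step is therefore the set equality $\divi_{d_p}(q)=\Xi_q$, which yields $\cdas_{d_p}(q)=\Conv(\Xi_q)$ directly from Definition \ref{2018_07_08_def2.18}. For the inclusion $\Xi_q\subset\divi_{d_p}(q)$: given $\xi=\dot\gamma(d_p(q))\in\Xi_q$, set $x_i:=\gamma(d_p(q)-1/i)$; since the interior of a minimal geodesic from $p$ is disjoint from $\{p\}\cup\Cut(p)$, the function $d_p$ is smooth at each $x_i$, and the first variation of arclength gives $(\grad d_p)_{x_i}=\dot\gamma(d_p(q)-1/i)\to\xi$. For the converse $\divi_{d_p}(q)\subset\Xi_q$: take $w=\lim_i(\grad d_p)_{x_i}$ with $x_i\to q$ and each $x_i\notin\{p\}\cup\Cut(p)$, and write $(\grad d_p)_{x_i}=\dot{\gamma_i}(d_p(x_i))$ where $\gamma_i:[0,d_p(x_i)]\lra M$ is the unique minimal geodesic from $p$ to $x_i$; the initial velocities $\dot{\gamma_i}(0)$ form a bounded subset of the unit sphere in $T_pM$, so after extracting a subsequence they converge to some unit vector $v\in T_pM$, and continuity of $\exp_p$ together with $d_p(x_i)\to d_p(q)$ shows that $\tilde\gamma(t):=\exp_p(tv)$ is a minimal geodesic from $p$ to $q$ with terminal velocity $w$.

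The main obstacle I anticipate is making the limiting argument in $\divi_{d_p}(q)\subset\Xi_q$ rigorous: one must simultaneously control $x_i\to q$, $\dot{\gamma_i}(0)\to v$, and the convergence of terminal velocities $\dot{\gamma_i}(d_p(x_i))\to w$, while verifying that the limit geodesic $\tilde\gamma$ is genuinely minimal---this last point follows from continuity of arclength and the identity $L(\gamma_i)=d_p(x_i)\to d_p(q)=L(\tilde\gamma)$. Once this is settled, combining the identity $\cdas_{d_p}(q)=\Conv(\Xi_q)$ with Lemma \ref{2018_07_08_lem2.21} and the separation step of the first paragraph completes the proof.
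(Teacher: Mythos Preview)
Your proof is correct and takes a cleaner route than the paper's. Both arguments pass through Lemma~\ref{2018_07_08_lem2.21}, and both establish (implicitly or explicitly) the inclusion $\divi_{d_p}(q)\subset\Xi_q$ by extracting a convergent subsequence of minimal geodesics to the approximating points. The difference lies in the other direction. You prove the reverse inclusion $\Xi_q\subset\divi_{d_p}(q)$ directly, by approaching $q$ along the given minimal geodesic $\gamma$ itself (so that $(\grad d_p)_{x_i}=\dot\gamma(d_p(q)-1/i)$ and the limit is immediate); this yields the clean identification $\cdas_{d_p}(q)=\Conv(\Xi_q)$ and reduces the whole equivalence to a separating-hyperplane statement. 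The paper, by contrast, never establishes $\Xi_q\subset\divi_{d_p}(q)$: for each direction $v$ it instead approaches $q$ along $c_v(s)=\exp_q(sv)$, obtains a possibly \emph{different} limit geodesic $\eta_\infty$ with $w^{(v)}=\dot\eta_\infty(d_p(q))\in\divi_{d_p}(q)$, and then invokes an external lemma of Itoh--Tanaka \cite[Lemma~2.1]{IT} to transfer the angle inequality from the original $\sigma^{(v)}$ to $\eta_\infty$. Your approach is more elementary (no external citation needed) and yields the stronger structural statement $\cdas_{d_p}(q)=\Conv(\Xi_q)$ as a byproduct.
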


\begin{proof}
Assume that 
$q \in M\setminus \{p\}$ is singular 
for $d_p$ in the sense of Clarke. 
By Lemma \ref{2018_07_08_lem2.21}, 
$o_q \in \cdas_{d_p}(q)$ holds where 
$\cdas_{d_p}(q)$ indicates the generalized gradient of 
$d_p$ at $q$. From Carath\'eodory's theorem 
(cf.\,\cite[Theorem 1.1.4]{Sch}) 
there are $w_1,w_2,\ldots,w_{m+1} \in \divi_{d_p}(q)$ 
such that 
$o_q = \sum_{k=1}^{m+1} \alpha_k w_k$ 
where $\sum_{k=1}^{m+1} \alpha_k =1$ 
and 
$\alpha_k \ge 0$ ($k= 1,2, \ldots, m+1$), 
and $\divi_{d_p}(q)$ denotes 
the mixture of the gradient of $d_p$ at $q$. 
Fix $v \in T_qM \setminus \{o_q\}$. We then have 
$0= \langle \sum_{k=1}^{m+1} \alpha_k w_k, v \rangle
=
\sum_{k=1}^{m+1} \alpha_k \langle w_k, v \rangle$. 
Since $\alpha_k \ge 0$ ($k= 1,2, \ldots, m+1$), 
there is a number $k_0\in \{1,2, \ldots,m+1\}$ 
such that 
$\langle w_{k_0}, v \rangle \le 0$. 
As $w_{k_0} \in \divi_{d_p}(q)$, 
there is a sequence 
$
\{x_i^{(k_0)}\}_{i\in \N} 
\subset B_{r(q)}(q) \setminus E_{d_p}
$ 
such that $\lim_{i\to\infty}x_i^{(k_0)}=q$ and 
$\lim_{i\to\infty}(\grad d_p)_{x_i^{(k_0)}} =w_{k_0}$. 
Now $M$ is complete, 
so for each $i \in \N$ there is 
a minimal geodesic segment 
$\gamma_i:[0, d_p(x_i^{(k_0)})]\to M$ emanating from 
$p$ to $x_i^{(k_0)}$, and hence we obtain the sequence 
$\{\gamma_i\}_{i \in \N}$ of such geodesics. 
The set 
$\Sph_p^{m-1}:=\{u \in T_pM\,|\,\|u\|=1\}$ 
is compact, so we can assume, 
by taking a subsequence of 
$\{(d\gamma_i/dt) (0)\}_{i \in \N} \subset \Sph_p^{m-1}$ 
if necessary, that 
$\lim_{i \to \infty}(d\gamma_i/dt) (0) \in \Sph_p^{m-1}$ exists. Set $u:= \lim_{i \to \infty}(d\gamma_i/dt) (0)$. 
Since $\lim_{i\to \infty} x_i^{(k_0)} = q$, 
$\{\gamma_i\}_{i \in \N}$ converges 
to a minimal geodesic segment 
$\gamma_\infty :[0, d_p(q)]\to M$ emanating from 
$p$ to $q$ given by $\gamma_\infty (t)=\exp_p tu$.
Moreover, from 
\cite[Proposition 4.8 of Chap.\,III]{Sak} 
we have 
$(\grad d_p)_{x_i^{(k_0)}} 
= (d\gamma_i / dt) (d_p(x_i^{(k_0)}))$ for 
all $i \in \N$. 
Note that $\|(\grad d_p)_{x_i^{(k_0)}} \|=1$ 
for each $i\in\N$. Since 
$
w_{k_0}= \lim_{i\to \infty} (\grad d_p)_{x_i^{(k_0)}} 
= (d\gamma_\infty / dt) (d_p(q))$ and $\langle w_{k_0}, v \rangle \le 0$, 
we see that 
$
0 \ge \langle w_{k_0}, v \rangle 
= 
\|v\|\cos \angle 
( (d\gamma_\infty / dt) (d_p(q)), v)$, 
hence 
$
\angle (- (d\gamma_\infty / dt) (d_p(q)), v) 
\le \pi / 2$ 
holds for all $v \in T_qM$, and finally 
$q$ is therefore critical for $d_p$ in 
the sense of Grove--Shiohama.\par 
We next assume that $q$ is critical for $d_p$ in 
the sense of Grove--Shiohama. 
Fix $v \in \Sph_q^{m-1}:=\{u \in T_qM\,|\,\|u\|=1\}$. 
There is a minimal geodesic segment $\sigma^{(v)}:[0, d_p(q)]\to M$ emanating from 
$p$ to $q$ such that 
$\angle 
(-(d\sigma^{(v)} / dt) (d_p(q)), v) \le \pi / 2$. 
As $\exp_q|_{\B_{r(q)}(o_q)}$ is a diffeomorphism 
onto $B_{r(q)}(q)$, we have a unique minimal geodesic 
$c_v :(-r(q), r(q)) \to B_{r(q)}(q)$ given by 
$c_v(s):= \exp_q sv$ for all $s \in (-r(q), r(q))$. 
Also $(\exp_q)^{-1}$ is a diffeomorphism 
from $B_{r (q)}(q)$ onto $\B_{r(q)}(o_q)$, 
so it follows from \cite[Lemma 6.5]{WhZy} that 
we can choose a 
sequence $\{s_i\}_{i \in \N} \subset (-r(q), r(q))$ 
such that $\lim_{i\to\infty}s_i=0$ and 
$c_v(s_i) \in B_{r (q)}(q) \setminus E_{d_p}$. 
To simplify notation we set 
$y_i:= c_v(s_i)$ for each $i \in \N$. 
Note that $\lim_{i\to\infty}y_i=q$. 
Since $M$ is complete, 
for each $i \in \N$ there is 
a minimal geodesic segment 
$\eta_i:[0, d_p(y_i)]\to M$ emanating from 
$p$ to $y_i$.
By the same argument above we can assume that 
$\{\eta_i\}_{i \in \N}$ converges 
to a minimal geodesic segment 
$\eta_\infty :[0, d_p(q)]\to M$ emanating from 
$p$ to $q$. From 
\cite[Proposition 4.8 of Chap.\,III]{Sak}, 
$(\grad d_p)_{y_i} 
= (d\eta_i / dt) (d_p(y_i))$ 
holds for each $i \in \N$. 
Now 
$
\lim_{i\to \infty}(\grad d_p)_{y_i} 
= (d\eta_\infty / dt) (d_p(q))
$, 
set $w^{(v)}:= (d\eta_\infty/dt) (d_p(q))$, and 
$
w^{(v)} 
= 
\lim_{i\to \infty}(\grad d_p)_{y_i} \in \divi_{d_p} (q)$. 
Since $\angle 
(- (d\sigma^{(v)} / dt) (d_p(q)), v) \le  \pi / 2$, 
\cite[Lemma 2.1]{IT} shows 
$
\angle (-w^{(v)}, v) 
= 
\angle (-\lim_{i\to \infty}(\grad d_p)_{y_i}, v) 
\le \pi / 2$, 
and hence we get 
$\angle (w^{(v)}, v) \ge \pi / 2$. 
Since $v \in \Sph_q^{m-1}$ was arbitrary, for each $v \in \Sph_q^{m-1}$ we take
$w^{(v)} \in \divi_{d_p}(q)$ satisfying 
$\angle (w^{(v)}, v) \ge \pi / 2$, 
and we set $W:=\{w^{(v)}\in \divi_{d_p}(q)\,|\,v \in \Sph_q^{m-1}\}$. 
Then $W$ is not contained in an open half space of 
$T_qM$, which implies 
$o_q \in \Conv(W)\subset \cdas_{d_p}(q)$. 
Lemma \ref{2018_07_08_lem2.21} gives 
that $q$ is singular for $d_p$.$\qedd$
\end{proof}

\begin{lemma}\label{2018_07_11_lem2.28}
$p$ is also singular for $d_p$ in the sense 
of Clarke.
\end{lemma}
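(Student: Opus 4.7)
The strategy is to apply Lemma \ref{2018_07_08_lem2.21}, which reduces the claim to showing $o_p \in \cdas_{d_p}(p)$. I plan to establish the stronger statement that the whole unit sphere $\Sph_p^{m-1}:= \{u \in T_pM\,|\,\|u\|=1\}$ is contained in the mixture $\divi_{d_p}(p)$; since $\cdas_{d_p}(p)$ is its convex hull, it will then automatically contain $o_p = \tfrac{1}{2}v + \tfrac{1}{2}(-v)$ for any unit $v$.

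To populate $\divi_{d_p}(p)$, I would fix $v \in \Sph_p^{m-1}$ and consider the radial geodesic $\gamma_v(t):= \exp_p(tv)$. By Theorem \ref{Whitehead} the exponential map restricts to a diffeomorphism of $\B_{r(p)}(o_p)$ onto the strongly convex ball $B_{r(p)}(p)$, so $d_p$ is smooth on $B_{r(p)}(p)\setminus\{p\}$ with $d_p\circ \gamma_v(t)=t$ there. Setting $x_i^{(v)}:= \exp_p(v/i)$, for all sufficiently large $i$ we have $x_i^{(v)} \in B_{r(p)}(p)\setminus E_{d_p}$, and exactly as in the proof of Lemma \ref{2018_07_09_lem2.27} the gradient satisfies $(\grad d_p)_{x_i^{(v)}}=(d\gamma_v/dt)(1/i) = \tau_{x_i^{(v)}}^p(v)$, the parallel translate of the initial velocity along $\gamma_v$.

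It remains to verify that this sequence realizes $v$ as a limit in the sense of Definitions \ref{2018_02_01_def2.3} and \ref{2020_01_09_def2.12}. Because parallel translation is a linear isometry, for every $u \in T_pM$ one computes
\[
\tau_{d_p(p)}^{d_p(x_i^{(v)})} \circ d(d_p)_{x_i^{(v)}} \circ \tau_{x_i^{(v)}}^p(u)
= \langle \tau_{x_i^{(v)}}^p(v),\, \tau_{x_i^{(v)}}^p(u)\rangle \frac{d}{dt}\bigg|_{0}
= \langle v, u\rangle \frac{d}{dt}\bigg|_{0},
\]
which is independent of $i$; its trivial limit $G_v(u):= \langle v, u\rangle (d/dt)|_{0}$ therefore lies in $K_{d_p}(p)$. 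Definition \ref{2020_01_09_def2.12} then identifies the associated limit of gradients with the unique $w \in T_pM$ satisfying $\langle w, u\rangle (d/dt)|_0 = G_v(u)$ for all $u$, namely $w=v$. Hence $v \in \divi_{d_p}(p)$, so $\Sph_p^{m-1} \subset \divi_{d_p}(p)$ and $o_p \in \cdas_{d_p}(p)$, which by Lemma \ref{2018_07_08_lem2.21} means $p$ is singular for $d_p$ in the sense of Clarke. I do not foresee a serious obstacle; the only delicate point is the bookkeeping of parallel transports in Definition \ref{2020_01_09_def2.12}, and it collapses because parallel translation along $\gamma_v$ preserves both $v$ itself and the inner product on $T_pM$.
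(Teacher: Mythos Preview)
Your proposal is correct and follows essentially the same route as the paper: both arguments fix $v\in\Sph_p^{m-1}$, run along the radial geodesic $t\mapsto\exp_p(tv)$, use that $(\grad d_p)$ there equals the geodesic's unit tangent, and conclude $\Sph_p^{m-1}\subset\divi_{d_p}(p)$, whence $o_p\in\cdas_{d_p}(p)$ and Lemma~\ref{2018_07_08_lem2.21} applies. The only cosmetic differences are that the paper also checks the reverse inclusion $\divi_{d_p}(p)\subset\Sph_p^{m-1}$ (so that $\cdas_{d_p}(p)$ is the full closed unit ball), whereas you extract $o_p$ directly via the antipodal combination $\tfrac12 v+\tfrac12(-v)$, and that you make the parallel-transport bookkeeping from Definition~\ref{2020_01_09_def2.12} explicit where the paper computes the limit through $(d\exp_p)_{t_iv}v\to(d\exp_p)_{o_p}v=v$.
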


\begin{proof}
Note that $d_p$ is differentiable 
on $B_{r(p)} (p)\setminus \{p\}$, for 
the set has no cut points of $p$. 
We first show that $\divi_F(p) = \Sph_p^{m-1}$. 
Indeed, since all the geodesics are
normalized, \cite[Proposition 4.8 of Chap.\,III]{Sak} 
gives $\divi_{d_p}(p) \subset \Sph_p^{m-1}$. Thus it is sufficient to prove 
$\Sph_p^{m-1} \subset \divi_{d_p}(p)$. Fix 
$v \in \Sph_p^{m-1}$. 
Let $\sigma_v :(-r(p),r(p))\to B_{r(p)}(p)$ 
be a minimal geodesic defined by 
$\sigma_v(t):= \exp_p tv$ 
for all $t \in (-r(p),r(p))$. 
Let $\{t_i\}_{i\in \N}$ be a sequence 
of constants 
$t_i \in (-r(p),r(p))\setminus \{0\}$ 
converging to $0$ by letting $i \to \infty$. 
Set $x_i:= \sigma_v(t_i)$ for each $i \in \N$. 
Note that $x_i \in B_{r(p)} (p)\setminus \{p\}$. 
Combining the Gauss lemma (cf.\,\cite[(1) of 
Proposition 2.3 of Chap.\,III]{Sak}) 
and \cite[Proposition 4.8 of Chap.\,III]{Sak} 
gives that 
$
(\grad d_p)_{\sigma_v (t_i)} = (d\sigma_v / dt)(t_i) 
= (d \exp_p)_{t_iv}v$. 
Since 
$
\lim_{i \to \infty}(\grad d_p)_{\sigma_v (t_i)} 
= \lim_{i \to \infty} (d \exp_p)_{t_iv}v 
= (d \exp_p)_{o_p}v 
=v
$, 
we get $v \in \divi_{d_p}(p)$, i.e., 
$\Sph_p^{m-1} \subset \divi_{d_p}(p)$ holds. Therefore 
$\divi_{d_p}(p)= \Sph_p^{m-1}$.\par  
Since $\divi_{d_p}(p)= \Sph_p^{m-1}$, 
$\cdas_{d_p}(p) = \Conv (\Sph_p^{m-1}) 
= \{X \in T_pM\,|\, \|X\|\le1\}$ holds, 
hence $o_p \in \cdas_{d_p}(p)$, and finally 
Lemma \ref{2018_07_08_lem2.21} shows that $p$ 
is singular for $d_p$ in that sense.$\qedd$
\end{proof}

\section{The adjoint of the generalized differential of 
Lipschitz maps}\label{2018_01_18_Sect3}

In this section we define the adjoint of the generalized differential of Lipschitz maps between Riemannian manifolds, study surjectivity and injectivity of the generalized differential near their 
nonsingular points, and finally show that the set of all singular points of the map is closed.

\subsection{Definition of the adjoint of the generalized differential}\label{2018_01_31_Sect3.2}

In this subsection we formulate the notion of the adjoint of the generalized differential of Lipschitz maps between Riemannian manifolds. Throughout this subsection 
let $M$ and $N$ 
be Riemannian manifolds 
with dimension $m$ and $n$ and 
Riemannian metrics 
$\langle \,\cdot\,, \,\cdot\,\rangle_M$ and 
$\langle \,\cdot\,, \,\cdot\,\rangle_N$, respectively. 
$F: M\to N$ is a locally Lipschitz map, 
and $K_F(p)$ the mixture of the differential of $F$ 
at $p \in M$.\par  
Fix $p\in M$. Choose 
two strongly convex open balls
$B_{r (p)}(p) \subset M$ and 
$B_{t (p)}(F(p))\subset N$ 
satisfying all three 
properties \eqref{2018_02_01_lem2.2_i}--\eqref{2018_02_01_lem2.2_iii}  
in Lemma \ref{2018_02_01_lem2.2}. Let 
$\cL(T_{F(p)}N, T_pM)$ be 
the $mn$-dimensional vector space 
of all linear mappings of $T_{F(p)} N$ to $T_pM$ 
topologized with the operator norm $\|\,\cdot\,\|$. 
Consider two nonempty sets  
\[
\adj(K_F(p))
:=\{G^* \in \cL(T_{F(p)}N, T_pM)\,|\, G \in K_F(p)
\}
\]
where $G^*$ denotes the adjoint of $G$, and 
\[
\{K_F(p)\}^*:=
\Big\{
H^* \in \cL (T_{F(p)}N, T_p M) \Bigm|
\begin{array}{l}
\exists \{x_i\}_{i \in \N} \subset 
B_{r (p)}(p) \setminus E_F \ \  \text{such that}\\
\displaystyle{\lim_{i \to \infty} x_i=p, \ \lim_{i \to \infty} \tau_{p}^{x_i}\circ (dF_{x_i})^* \circ \tau_{F(x_i)}^{F(p)}= H^*}
\end{array}
\Big\}
\]
where 
$\tau_{p}^{x_i}$ and $\tau_{F(x_i)}^{F(p)}$ are 
parallel transports as in 
Definition \ref{2018_02_12_def2.1}, 
and $(dF_{x_i})^*$ denotes the adjoint 
of $dF_{x_i}$ at each $x_i$. 
Note that, from the \eqref{2018_02_01_lem2.2_ii} 
of Lemma \ref{2018_02_01_lem2.2}, 
each 
$\tau_{F(x_i)}^{F(p)}: T_{F(p)}N\to T_{F(x_i)}N$ 
is defined in that sense.

\begin{lemma}\label{2018_02_12_lem3.6}
$\adj(K_F(p))= \{K_F(p)\}^*$.
\end{lemma}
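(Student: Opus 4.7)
The plan is to prove the equality by establishing both inclusions, using only two ingredients: parallel translations are linear isometries, so their adjoints coincide with their inverses, and the adjoint operation on $\cL(T_pM, T_{F(p)}N)$ is continuous (in fact isometric) with respect to the operator norm, which follows from Lemma \ref{2018_01_16_lem3.3} together with the finite-dimensionality established in Section \ref{2018_01_18_Sect3.1}.

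First I would record the key algebraic identity. For any $x \in B_{r(p)}(p) \setminus E_F$, since $\tau_{x_i}^p$ and $\tau_{F(p)}^{F(x_i)}$ are linear isometries with inverses $\tau_p^{x_i}$ and $\tau_{F(x_i)}^{F(p)}$, Theorem \ref{2018_01_16_thm3.1} gives $(\tau_{x_i}^p)^* = \tau_p^{x_i}$ and $(\tau_{F(p)}^{F(x_i)})^* = \tau_{F(x_i)}^{F(p)}$. Using the functorial formula $(f \circ g)^* = g^* \circ f^*$ from standard adjoint calculus, one obtains
\[
(\tau_{F(p)}^{F(x_i)} \circ dF_{x_i} \circ \tau_{x_i}^p)^* = \tau_p^{x_i} \circ (dF_{x_i})^* \circ \tau_{F(x_i)}^{F(p)}.
\]
This is the single identity that links the two descriptions.

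Next I would handle the inclusion $\adj(K_F(p)) \subset \{K_F(p)\}^*$. Take $G^* \in \adj(K_F(p))$ with $G \in K_F(p)$, and pick a sequence $\{x_i\}_{i\in\N} \subset B_{r(p)}(p) \setminus E_F$ such that $x_i \to p$ and $\tau_{F(p)}^{F(x_i)} \circ dF_{x_i} \circ \tau_{x_i}^p \to G$. Applying the continuous adjoint operation to both sides and invoking the identity above, I get $\tau_p^{x_i} \circ (dF_{x_i})^* \circ \tau_{F(x_i)}^{F(p)} \to G^*$, which by definition puts $G^*$ into $\{K_F(p)\}^*$. For the reverse inclusion, take $H^* \in \{K_F(p)\}^*$ with corresponding sequence $\{x_i\}$, so $\tau_p^{x_i} \circ (dF_{x_i})^* \circ \tau_{F(x_i)}^{F(p)} \to H^*$. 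Applying the adjoint operation once more, and using $(H^*)^* = H$ together with the identity above, I obtain $\tau_{F(p)}^{F(x_i)} \circ dF_{x_i} \circ \tau_{x_i}^p \to H$; hence $H \in K_F(p)$ and $H^* \in \adj(K_F(p))$.

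I do not expect any real obstacle here: the proof is essentially a bookkeeping check that the parallel transport operators in the definition of $\{K_F(p)\}^*$ are exactly the adjoints of those appearing in the definition of $K_F(p)$, combined with continuity of $f \mapsto f^*$. The only subtlety worth flagging is that the parallel translations $\tau_{F(p)}^{F(x_i)}$ are well-defined for $x_i$ sufficiently close to $p$, which is guaranteed by \eqref{2018_02_01_lem2.2_ii} of Lemma \ref{2018_02_01_lem2.2}, so passing to the limit along any admissible sequence is legitimate.
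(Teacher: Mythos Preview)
Your proposal is correct and follows essentially the same approach as the paper: both establish the two inclusions by using that parallel translations are isometries (hence their adjoints are their inverses) and that taking adjoints commutes with the limit. The only cosmetic difference is that you package this as ``the adjoint operation is continuous'' and invoke the functorial identity $(f\circ g)^*=g^*\circ f^*$, whereas the paper unrolls both facts directly via the inner-product characterization from Theorem \ref{2018_01_16_thm3.1}; the content is identical.
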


\begin{proof}Fix $G^* \in \adj(K_F(p))$. 
Since $(G^*)^* = G \in K_F(p)$ by definition of $\adj(K_F(p))$, there is a sequence 
$\{x_i\}_{i \in \N} \subset B_{r (p)}(p) \setminus E_F$ 
such that 
$\lim_{i \to \infty} x_i=p$ and 
$\lim_{i \to \infty} \tau_{F(p)}^{F(x_i)}\circ dF_{x_i} \circ \tau_{x_i}^{p}= G$. Note that the adjoints of $\tau_{x_i}^{p}$ and $\tau_{F(p)}^{F(x_i)}$ are their 
inverses $\tau^{x_i}_{p}$ and $\tau^{F(p)}_{F(x_i)}$ 
since parallel transport is an isometry. 
Fix $u \in T_pM$ and $v \in T_{F(p)}N$. 
The Riesz representation 
theorem (cf.\,\cite[Theorem 10.1]{Rom}) 
then gives
\begin{align*}
\langle u, G^* (v)\rangle_M 
= \langle (\lim_{i \to \infty} \tau_{F(p)}^{F(x_i)}\circ dF_{x_i} \circ \tau_{x_i}^{p})(u), v\rangle_N 
= 
\langle 
u, 
(\lim_{i \to \infty} \tau_{p}^{x_i}\circ (dF_{x_i})^* \circ \tau_{F(x_i)}^{F(p)})(v)
\rangle_M,
\end{align*}
which implies 
$G^* \in \{K_F(p)\}^*$, 
and hence 
$\adj(K_F(p))\subset \{K_F(p)\}^*$. 
Our next claim is that $\{K_F(p)\}^* \subset  \adj(K_F(p))$. 
Indeed, for any fixed $H^* \in \{K_F(p)\}^*$ 
there is a sequence $\{x_i\}_{i \in \N} \subset 
B_{r (p)}(p) \setminus E_F$ such that 
$\lim_{i \to \infty} x_i=p$ and that 
$\lim_{i \to \infty} \tau_{p}^{x_i}\circ (dF_{x_i})^* \circ \tau_{F(x_i)}^{F(p)}= H^*$. For any $u \in T_pM$ and any $v \in T_{F(p)}N$ we have  
\[
\langle u, H^* (v)\rangle_M 
= \langle u, (\lim_{i \to \infty} \tau_{p}^{x_i}\circ (dF_{x_i})^* \circ \tau_{F(x_i)}^{F(p)})(v)\rangle_M
= \langle 
u, (\lim_{i \to \infty} \tau_{F(p)}^{F(x_i)}\circ dF_{x_i}
\circ \tau_{x_i}^{p})^*(v) \rangle_M, 
\]
which implies that 
$
H^* = (\lim_{i \to \infty} \tau_{F(p)}^{F(x_i)}\circ dF_{x_i}\circ \tau_{x_i}^{p})^*$. 
Since 
the adjoint 
$(H^*)^*$ of $H^*$ is unique 
by the Riesz representation 
theorem (cf.\,\cite[Theorem 10.1]{Rom}), 
$\lim_{i \to \infty} \tau_{F(p)}^{F(x_i)}\circ dF_{x_i}
\circ \tau_{x_i}^{p} = (H^*)^* \in K_F(p)$, 
and hence $\{K_F(p)\}^* \subset \adj(K_F(p))$. Therefore 
$\adj(K_F(p))= \{K_F(p)\}^*$.$\qedd$
\end{proof}

\begin{lemma}\label{2018_02_12_lem3.7}
Let 
$\adj (\partial F(p)):= \left\{
g^* \in \cL(T_{F(p)}N, T_pM)\,|\,g \in \partial F(p)
\right\}$ where $\partial F(p)$ is 
the generalized differential of $F$ at $p$ and 
$g^*$ denotes the adjoint of each $g \in \partial F(p)$. Then
\begin{enumerate}[{\rm (i)}] 
\item\label{2018_05_26_lem3.7_1}
$\adj (\partial F(p))$ is a nonempty and compact subset of 
$\cL(T_{F(p)}N, T_pM)$;
\item\label{2018_05_26_lem3.7_2}
$\adj (\partial F(p))= \Conv (\{K_F(p)\}^*)$.
\end{enumerate}
\end{lemma}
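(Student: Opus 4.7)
The plan is to recognize the adjoint operation as a linear homeomorphism and then push the two claims through it as routine consequences. Define the map
\[
\Phi\colon \cL(T_pM,T_{F(p)}N)\lra \cL(T_{F(p)}N,T_pM), \qquad \Phi(f):=f^{\ast}.
\]
By Lemma \ref{2018_01_16_lem3.3} (ii) and (iii), $\Phi$ is linear, and by (i), $\Phi\circ\Phi=\id$, so $\Phi$ is a linear bijection. Since its source and target are finite-dimensional normed vector spaces, $\Phi$ is automatically continuous (in fact an isometry for the operator norm).

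For (i), I would combine Remark \ref{2020_01_07_rem2.5} and Remark \ref{2018_01_12_rem2.4} (ii) to observe that $\partial F(p)$ is a nonempty compact convex subset of $\cL(T_pM,T_{F(p)}N)$. Since $\adj(\partial F(p))$ is exactly the image $\Phi(\partial F(p))$ of this set under the continuous map $\Phi$, it is nonempty and compact in $\cL(T_{F(p)}N,T_pM)$.

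For (ii), the key observation is that linearity of $\Phi$ forces it to commute with the convex hull operation: for any subset $S$ of $\cL(T_pM,T_{F(p)}N)$ and any convex combination $g=\sum_{i}a_i g_i$ with $g_i\in S$, one has $\Phi(g)=\sum_{i}a_i \Phi(g_i)$, and conversely every convex combination of elements of $\Phi(S)$ is the $\Phi$-image of a convex combination in $S$, so $\Phi(\Conv S)=\Conv(\Phi(S))$. Taking $S=K_F(p)$ and invoking Definition \ref{2018_01_09_def2.2}, I would conclude
\[
\adj(\partial F(p))=\Phi\bigl(\Conv(K_F(p))\bigr)=\Conv\bigl(\Phi(K_F(p))\bigr)=\Conv(\adj(K_F(p))).
\]
Lemma \ref{2018_02_12_lem3.6} then identifies $\adj(K_F(p))$ with $\{K_F(p)\}^{\ast}$, giving the required equality $\adj(\partial F(p))=\Conv(\{K_F(p)\}^{\ast})$.

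I do not anticipate any substantial obstacle. The entire argument reduces to the two observations that a continuous linear bijection preserves compactness and that a linear map commutes with convex hulls; everything else is bookkeeping via Lemmas \ref{2018_01_16_lem3.3} and \ref{2018_02_12_lem3.6}. The mildest care one has to take is that the adjoint of each $g\in\partial F(p)$ is well defined in the first place, but this is immediate from Theorem \ref{2018_01_16_thm3.1} applied to the inner product spaces $T_pM$ and $T_{F(p)}N$.
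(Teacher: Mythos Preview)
Your proof is correct and rests on the same two facts the paper uses---linearity of the adjoint (Lemma \ref{2018_01_16_lem3.3}) and the identification $\adj(K_F(p))=\{K_F(p)\}^{\ast}$ (Lemma \ref{2018_02_12_lem3.6})---but you package them more conceptually. The paper proves (i) by a bare-hands sequential compactness argument (take $\{g_i^{\ast}\}$, extract a convergent subsequence of $\{g_i\}$ in $\partial F(p)$, and verify via inner products that the adjoints converge), and proves (ii) by invoking Carath\'eodory's theorem to write each $g\in\partial F(p)$ explicitly as a convex combination of $nm+1$ elements of $K_F(p)$ before applying Lemma \ref{2018_01_16_lem3.3} termwise. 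Your route---recognize $\Phi(f)=f^{\ast}$ as a linear bijection between finite-dimensional spaces, hence a homeomorphism, and then use that continuous images preserve compactness and linear images commute with convex hulls---is shorter and makes clear why both assertions are automatic; it also dispenses with Carath\'eodory entirely. One cosmetic point: your ``$\Phi\circ\Phi=\id$'' is a slight abuse since the two adjoint maps go in opposite directions between $\cL(T_pM,T_{F(p)}N)$ and $\cL(T_{F(p)}N,T_pM)$; what you mean (and what Lemma \ref{2018_01_16_lem3.3}(i) gives) is that the adjoint map in the reverse direction is a two-sided inverse to $\Phi$, which is enough for bijectivity.
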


\begin{proof}We first show \eqref{2018_05_26_lem3.7_1}. Since $\partial F(p)\not= \emptyset$, 
the Riesz representation theorem (cf.\,\cite[Theorem 10.1]{Rom}) guarantees $\adj (\partial F(p))\not= \emptyset$. 
Take any sequence 
$\{g_i^*\}_{i\in \N} \subset \adj (\partial F(p))$ 
where each $g_i^*$ is the adjoint 
of $g_i \in \partial F(p)$. 
Since $\partial F(p)$ is compact, 
the sequence $\{g_i\}_{i\in \N}\subset \partial F(p)$ 
contains a subsequence $\{g_{i_k}\}_{k \in \N}$ 
which converges to some point $h \in \partial F(p)$ 
as $k\to\infty$. 
For any $u \in T_pM$ and any $v \in T_{F(p)}N$ we have  
$
\langle u, h^* (v)\rangle_M 
= \langle h(u), v\rangle_N
= \langle (\lim_{k \to \infty} g_{i_k})(u), v\rangle_N
= \langle u, (\lim_{k \to \infty} g_{i_k}^*)(v)\rangle_M$, 
hence $\lim_{k \to \infty} g_{i_k}^*= h^* \in \adj (\partial F(p))$. 
Since $\{g_{i_k}\}_{k \in \N} \subset \{g_i\}_{i \in \N}$, and 
since $g_{i_k}^*$ is the adjoint of $g_{i_k}$, 
$\{g_i^*\}_{i\in \N}$ contains $\{g_{i_k}^*\}_{k \in \N}$ as 
a subsequence converging 
to $h^* \in \adj (\partial F(p))$, 
which implies that $\adj (\partial F(p))$ is compact.\par 
Next we show \eqref{2018_05_26_lem3.7_2}. 
Let $g^* \in \adj (\partial F(p))$ 
where $g \in \partial F(p)$. From  
Carath\'eodory's theorem (cf.\,\cite[Theorem 1.1.4]{Sch}) 
there are $g_1, g_2,\ldots, g_{nm+1} \in K_F (p)$ such that 
$g^*= (\sum_{i=1}^{nm+1}a_i g_i)^*$ where 
$\sum_{i=1}^{nm+1} a_i =1$ and $a_i \ge 0$ ($i=1,2,\ldots, nm+1$). 
Since 
$g^*= \sum_{i=1}^{nm+1}a_i g_i^*$, 
and since Lemma \ref{2018_02_12_lem3.6} 
gives $g_i^* \in \adj(K_F (p))= \{K_F (p)\}^*$ 
($i=1,2,\ldots, nm+1$), 
$g^* \in \Conv (\{K_F(p)\}^*)$, i.e., $\adj (\partial F(p)) \subset \Conv (\{K_F(p)\}^*)$. The similar discussion shows $\Conv (\{K_F(p)\}^*) \subset \adj (\partial F(p))$, and hence $\adj (\partial F(p)) = \Conv (\{K_F(p)\}^*)$.
$\qedd$
\end{proof}

Lemma \ref{2018_02_12_lem3.7} justifies 
the following definition. 

\begin{definition}\label{2018_11_08_def3.8}
We call the set $\{\partial F (p)\}^* := \adj(\partial F(p))$ the {\em adjoint of $\partial F(p)$}.
\end{definition}

\begin{lemma}\label{2018_02_16_lem3.9}
$\{\partial F (p)\}^*$ is a nonempty, compact, and  
convex subset of $\cL(T_{F(p)}N, T_pM)$. 
\end{lemma}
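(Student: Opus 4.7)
The plan is to observe that two of the three required properties are already essentially contained in Lemma \ref{2018_02_12_lem3.7}, so the only real work is to record why convexity holds. By Definition \ref{2018_11_08_def3.8}, $\{\partial F(p)\}^* = \adj(\partial F(p))$, so nonemptiness and compactness are nothing more than part \eqref{2018_05_26_lem3.7_1} of Lemma \ref{2018_02_12_lem3.7}; I would simply cite that and move on.

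For convexity, I see two equally short routes and would pick whichever reads more cleanly. The first route is purely formal: Lemma \ref{2018_02_12_lem3.7} \eqref{2018_05_26_lem3.7_2} identifies $\{\partial F(p)\}^*$ with the convex hull $\Conv(\{K_F(p)\}^*)$, and a convex hull is convex by definition, so there is nothing more to prove. The second route is direct and perhaps more in the spirit of the subsection: take $f^*, g^* \in \{\partial F(p)\}^*$ with underlying $f, g \in \partial F(p)$, fix $\lambda \in [0,1]$, and use Lemma \ref{2018_01_16_lem3.3} (ii),(iii) to write
\[
\lambda f^* + (1-\lambda) g^* = (\lambda f + (1-\lambda) g)^*.
\]
Since $\partial F(p) = \Conv(K_F(p))$ is convex by Remark \ref{2018_01_12_rem2.4} (ii), we have $\lambda f + (1-\lambda) g \in \partial F(p)$, so its adjoint lies in $\{\partial F(p)\}^*$, giving convexity.

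There is no real obstacle here: the heavy lifting (topologizing $\cL(T_{F(p)}N, T_pM)$, establishing that adjoints of limits are limits of adjoints, and computing $\adj(\partial F(p)) = \Conv(\{K_F(p)\}^*)$) was already done in Lemmas \ref{2018_02_12_lem3.6} and \ref{2018_02_12_lem3.7}. The present lemma is essentially a packaging statement, recording in one line the properties of $\{\partial F(p)\}^*$ that will be invoked when discussing surjectivity/injectivity of the generalized differential in the rest of Section \ref{2018_01_18_Sect3}. Accordingly my write-up would be just a few lines: cite Lemma \ref{2018_02_12_lem3.7} for nonemptiness and compactness, and give the one-line adjoint-of-convex-combination argument (or cite Lemma \ref{2018_02_12_lem3.7} \eqref{2018_05_26_lem3.7_2}) for convexity.
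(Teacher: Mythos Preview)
Your proposal is correct and matches the paper's own proof, which is a one-liner citing Lemma \ref{2018_02_12_lem3.7} together with \cite[Theorem 17.2]{Ro}. Your ``first route'' for convexity (that $\{\partial F(p)\}^* = \Conv(\{K_F(p)\}^*)$ is a convex hull, hence convex) is exactly what the paper has in mind; the citation to Rockafellar just packages the convex-hull facts, and your alternative direct adjoint computation is an equally valid but unnecessary embellishment.
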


\begin{proof}This statement follows from 
Lemma \ref{2018_02_12_lem3.7} and 
\cite[Theorem 17.2]{Ro}.$\qedd$
\end{proof}

\subsection{Surjectivity and injectivity of the generalized differential near a nonsingular point}\label{2018_11_10_Sect3.3}

All notation in this subsection is the same 
as defined in Section \ref{2018_01_31_Sect3.2}. 
First we show surjectivity of the generalized differential of $F$ near a nonsingular point when $m\ge n$, and next show injectivity when $m\le n$. 
Finally we see that the set of all singular points of $F$ is a closed set in $M$. 

\begin{lemma}\label{2018_02_13_lem3.10}
For any $p \in M$ and any $\ve >0$ there is a constant 
$\mu(p, \ve) \in (0, r(p))$ such that 
$
\tau^{x}_{p}\circ \{\partial F (x)\}^* \circ \tau^{F(p)}_{F(x)} 
\subset \cU_\ve ( \{\partial F (p)\}^* )$ 
for all $F(x) \in B_{t(p)} (F(p))$ 
($x \in B_{\mu(p,\,\ve) } (p)$) where 
$
\tau^{x}_{p}\circ \{\partial F (x)\}^* \circ \tau^{F(p)}_{F(x)}
:=\{
\tau^{x}_{p}\circ g^* \circ \tau^{F(p)}_{F(x)}\,|\, g^* 
\in \{\partial F (x)\}^*\}
$ 
and $\cU_\ve (\{\partial F (p)\}^*)$ 
denotes the $\ve$-open neighborhood of $\{\partial F (p)\}^*$ 
in $\cL (T_{F(p)}N, T_pM)$.
\end{lemma}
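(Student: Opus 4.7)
The plan is to deduce the lemma from the analogous ``upper semicontinuity'' statement for $\partial F$ itself, namely Remark 2.4 (vi), by passing to adjoints. First I would assemble the two linear-algebraic facts I need: (a) for any $f \in \cL(V,W)$ between finite-dimensional inner product spaces, the adjoint operation is an isometry of operator norms, $\|f^*\|=\|f\|$, and it is linear by Lemma 3.3 (ii)(iii); (b) the reverse-order law $(A\circ B)^* = B^* \circ A^*$ holds, and since the parallel translations $\tau_x^p$ and $\tau_{F(p)}^{F(x)}$ are linear isometries, their adjoints coincide with their inverses, i.e.\ $(\tau_x^p)^* = \tau_p^x$ and $(\tau_{F(p)}^{F(x)})^* = \tau_{F(x)}^{F(p)}$. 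Combining (a) and (b) gives the key identity
\[
\bigl(\tau_{F(p)}^{F(x)} \circ g \circ \tau_x^p\bigr)^{\!*} \;=\; \tau_p^x \circ g^* \circ \tau_{F(x)}^{F(p)}.
\]

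Next I would apply Remark 2.4 (vi) to the given $\ve>0$ to get a constant $\mu(p,\ve)\in(0,r(p))$ such that for every $x \in B_{\mu(p,\ve)}(p)$ and every $g \in \partial F(x)$ there exists $h \in \partial F(p)$ with $\|\tau_{F(p)}^{F(x)}\circ g \circ \tau_x^p - h\|<\ve$. Note that $x \in B_{\mu(p,\ve)}(p)\subset B_{r(p)}(p)$ automatically forces $F(x) \in B_{t(p)}(F(p))$ by Lemma 2.2 (ii), so every parallel translation in sight is well defined. Now pick an arbitrary $g^* \in \{\partial F(x)\}^*$; by Definition 3.8 and Lemma 3.7 it is the adjoint of some $g \in \partial F(x)$. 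Applying the norm-preserving property of $*$ together with the identity above, I obtain
\[
\bigl\|\tau_p^x \circ g^* \circ \tau_{F(x)}^{F(p)} - h^*\bigr\|
\;=\; \bigl\|\bigl(\tau_{F(p)}^{F(x)}\circ g \circ \tau_x^p - h\bigr)^{\!*}\bigr\|
\;=\; \bigl\|\tau_{F(p)}^{F(x)}\circ g \circ \tau_x^p - h\bigr\|
\;<\; \ve.
\]

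Since $h \in \partial F(p)$ implies $h^* \in \{\partial F(p)\}^*$, this displays $\tau_p^x \circ g^* \circ \tau_{F(x)}^{F(p)}$ as a point of $\cU_\ve(\{\partial F(p)\}^*)$, which is exactly the required inclusion. I do not expect any genuine obstacle here; the only thing to watch is the bookkeeping in the reverse-order law (the adjoint swaps the outer and inner translations) and the use of $(\tau)^{-1}=(\tau)^*$ for isometries, which is precisely what allows the adjoint version of Remark 2.4 (vi) to follow formally rather than demanding a separate extraction of a convergent sequence $\{x_i\}$ in $\{K_F(x)\}^*$.
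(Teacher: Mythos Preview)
Your proof is correct and is essentially what the paper has in mind: the paper's proof is the single sentence ``This is a direct consequence of Definition \ref{2018_11_08_def3.8},'' and your argument spells out exactly why---you take Remark \ref{2018_01_12_rem2.4} (vi), apply the adjoint (using $\|f^*\|=\|f\|$, linearity from Lemma \ref{2018_01_16_lem3.3}, and $(\tau)^*=\tau^{-1}$ for isometries), and land on the desired inclusion. There is no substantive difference in approach; you have simply made explicit the routine passage to adjoints that the paper leaves to the reader.
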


\begin{proof}
This is a direct consequence of Definition \ref{2018_11_08_def3.8}.$\qedd$
\end{proof}

\begin{proposition}\label{2018_02_12_prop3.11}
Assume $m \ge n$. 
If a point $p \in M$ is nonsingular for $F$, 
then there are two constants 
$\lambda (p) >0$ and 
$\delta (p)>0$ satisfying the following properties: 
\begin{enumerate}[{\rm (i)}] 
\item $B_{2 \lambda (p)} (p)$ satisfies \eqref{2018_01_28_Sect2_a} 
and \eqref{2018_01_28_Sect2_b} 
of Theorem \ref{Whitehead}; 
\item $F|_{B_{2 \lambda (p)} (p)}$ is 
a Lipschitz map from $B_{2 \lambda (p)} (p)$ 
into $B_{t (p)}(F(p))$; 
\item For any 
$u \in \Sph_{F(p)}^{n-1}:=\{w \in T_{F(p)} N\,|\,\|w\|=1\}$ 
and 
any $x \in B_{2 \lambda (p)}(p)$ 
there is a vector 
$
V_x^{(u)} 
\in 
\Sph_x^{m-1}:=\{v \in T_x M\,|\,\|v\|=1\}
$ such that 
$
\langle 
V_x^{(u)}, (g^* \circ \tau^{F(p)}_{F(x)}) (u)
\rangle_M \ge \delta (p)$ 
holds for all $g^* \in \{\partial F (x)\}^*$. 
In particular 
$\langle 
V_x^{(u)}, (dF_x)^* (\tau^{F(p)}_{F(x)} (u)) 
\rangle_M 
\ge \delta (p)$ 
for all $x \in B_{2 \lambda (p)} (p)\setminus E_F$; 
\item Every $x \in B_{2 \lambda (p)} (p)$ 
is nonsingular for $F$. 
\end{enumerate}
\end{proposition}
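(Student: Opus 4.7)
The plan is to first produce a uniform positive lower bound $\delta(p)$ at the base point $p$ via a closest-point construction in $T_pM$, then transport this bound to nearby $x$ using parallel translation together with the operator-norm continuity supplied by Lemma \ref{2018_02_13_lem3.10}, and finally read off (iv) from (iii) by a rank argument. I will exploit throughout that the adjoint of a linear isometry equals its inverse, so the adjoint of $\tau^p_x$ is $\tau^x_p$.

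At the base point: since $p$ is nonsingular and $m \ge n$, every $g \in \partial F(p)$ has rank $n$, so by Lemma \ref{2018_01_16_lem3.3}(iv) every $g^* \in \{\partial F(p)\}^*$ is injective. For each unit $u \in \Sph_{F(p)}^{n-1}$ the set
\[
A_u := \{\,g^*(u) \mid g^* \in \{\partial F(p)\}^*\,\} \subset T_pM
\]
is nonempty, compact, and convex (as the image of the nonempty compact convex set $\{\partial F(p)\}^*$ from Lemma \ref{2018_02_16_lem3.9} under the continuous linear evaluation $g^* \mapsto g^*(u)$), and does not contain $o_p$. Let $w^*(u) \in A_u$ be the closest point to $o_p$; the projection inequality gives $\langle w^*(u), w\rangle_M \ge \|w^*(u)\|^2$ for every $w \in A_u$, so setting $V_p^{(u)} := w^*(u)/\|w^*(u)\| \in \Sph_p^{m-1}$ yields $\langle V_p^{(u)}, w\rangle_M \ge h(u)$, where $h(u) := \min_{g^* \in \{\partial F(p)\}^*} \|g^*(u)\| > 0$. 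A routine joint-continuity and compactness argument shows $h$ is continuous in $u$, so that $2\delta(p) := \min_{u \in \Sph_{F(p)}^{n-1}} h(u) > 0$ is well defined.

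To propagate to nearby $x$, apply Lemma \ref{2018_02_13_lem3.10} with $\ve = \delta(p)$ to obtain $\mu(p,\delta(p)) \in (0, r(p))$, and choose $\lambda(p) > 0$ small enough that $B_{2\lambda(p)}(p) \subset B_{\mu(p,\delta(p))}(p)$ and $B_{2\lambda(p)}(p)$ also satisfies (i) and (ii) (possible by Theorem \ref{Whitehead} and Lemma \ref{2018_02_01_lem2.2}, shrinking if necessary). Put $V_x^{(u)} := \tau^p_x(V_p^{(u)}) \in \Sph_x^{m-1}$, a unit vector because $\tau^p_x$ is an isometry. For any $g^* \in \{\partial F(x)\}^*$, using that the adjoint of $\tau^p_x$ equals $\tau^x_p$,
\[
\langle V_x^{(u)},\, g^*(\tau^{F(p)}_{F(x)}(u))\rangle_M = \langle V_p^{(u)},\, T_x(u)\rangle_M, \qquad T_x := \tau^x_p \circ g^* \circ \tau^{F(p)}_{F(x)}.
\]
Lemma \ref{2018_02_13_lem3.10} provides some $h^* \in \{\partial F(p)\}^*$ with $\|T_x - h^*\| \le \delta(p)$, so the right-hand side is bounded below by $\langle V_p^{(u)}, h^*(u)\rangle_M - \delta(p) \ge 2\delta(p) - \delta(p) = \delta(p)$, giving (iii); the ``in particular'' clause is immediate since $dF_x \in \partial F(x)$ wherever $dF_x$ exists. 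For (iv): if some $x \in B_{2\lambda(p)}(p)$ had $g \in \partial F(x)$ of rank $< n$, then $g^*$ would have rank $< n$ by Lemma \ref{2018_01_16_lem3.3}(iv), and since $\tau^{F(p)}_{F(x)}$ is an isomorphism, $g^* \circ \tau^{F(p)}_{F(x)}$ would annihilate some unit $u \in \Sph_{F(p)}^{n-1}$, contradicting (iii).

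The only delicate point is the uniformity of $\delta(p)$ in the first stage: pointwise positivity $h(u) > 0$ is immediate from injectivity of each $g^*$, but one must combine this with compactness of both $\{\partial F(p)\}^*$ and $\Sph_{F(p)}^{n-1}$ to extract a strictly positive infimum. The remainder is bookkeeping with parallel transports and the operator-norm estimate from Lemma \ref{2018_02_13_lem3.10}.
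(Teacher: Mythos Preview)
Your proof is correct and follows the same overall strategy as the paper: use injectivity of each $g^* \in \{\partial F(p)\}^*$ together with compactness and convexity of $\{\partial F(p)\}^*u$ to extract a uniform lower bound $2\delta(p)$ at $p$, then invoke Lemma~\ref{2018_02_13_lem3.10} to propagate to nearby $x$, and deduce (iv) from (iii) via Lemma~\ref{2018_01_16_lem3.3}(iv).

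The one genuine difference is in how $V_x^{(u)}$ is constructed. The paper enlarges $\{\partial F(p)\}^*$ to its closed $\ve(p)$-neighborhood, parallel-transports the set $\overline{\cU_{\ve(p)}(\{\partial F(p)\}^*)}u$ to $T_xM$, and \emph{re-runs} the closest-point construction there, so that $V_x^{(u)} = b^{(u,x)}/\|b^{(u,x)}\|$ depends on $x$; the inequality (iii) then comes from a foot-of-perpendicular argument (Eqs.~\eqref{2018_02_10_lem2.12_e}--\eqref{2018_10_02_lem2.12_f}). You instead fix $V_p^{(u)}$ once at $p$, set $V_x^{(u)} := \tau^p_x(V_p^{(u)})$, and absorb the perturbation directly via the operator-norm bound $\|T_x - h^*\| < \delta(p)$ and Cauchy--Schwarz. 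Your route is shorter and avoids re-doing the projection at each $x$; the paper's construction yields a $V_x^{(u)}$ that is adapted to the local set $\{\partial F(x)\}^*$, which is not needed for the present statement but could be useful if one wanted sharper estimates depending on $x$.
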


\begin{proof}
Fix $p \in M$ nonsingular for $F$. 
By Definition \ref{2018_01_10_def2.7}, 
$\ra (g) = n$ holds for all $g \in \partial F(p)$.
Since 
$\ra (g^*)= \ra (g) = n$ for all $g^*\in \{\partial F (p)\}^*$, 
$\{\partial F (p)\}^*$ has maximal rank. 
Every $g^*\in \{\partial F (p)\}^*$ is therefore injective.\par 
Take $u \in \Sph^{n-1}_{F(p)}$. Set 
$\{\partial F (p)\}^*u
:= \{ 
g^*(u)\,|\,g^*\in \{\partial F (p)\}^*
\} 
\subset T_{p}M$. 
Lemma \ref{2018_02_16_lem3.9} implies that 
$\{\partial F (p)\}^*u$ is compact and convex 
in $T_pM$. 
Since each $g^*\in \{\partial F (p)\}^*$ is injective 
as we have seen above, 
$o_p \not\in \{\partial F (p)\}^*u$ holds where 
$o_p$ indicates the origin of $T_pM$. 
Since $\{\partial F (p)\}^*u$ is 
compact and convex, there is a point $a^{(u)}$ 
in the boundary $\Bd(\{\partial F (p)\}^*u)$ 
such that 
$\|a^{(u)}\| 
= d_{T_pM} (o_p, \{\partial F (p)\}^*u) 
>0$ where $d_{T_pM}$ denotes the distance 
function of $T_pM$, i.e., $d_{T_xM} (a,b):= \|a-b\|$ for all $a,b \in T_xM$ ($x \in M$). 
Since $\Sph^{n-1}_{F(p)}$ is compact, there is 
a constant $\delta (p) > 0$ given by 
$\delta (p)
:= 
\min
\{ 
\|a^{(w)}\|\,|\, 
w \in \Sph^{n-1}_{F(p)} 
\}/ 2$, 
and hence 
$
d_{T_pM} (o_p, \{\partial F (p)\}^*u)
= \|a^{(u)}\| \ge 2\delta (p)$. 
By this inequality there is a constant 
$\ve(p) >0$ sufficiently small such that 
\begin{equation}\label{2018_01_11_lem2.12_1}
d_{T_pM} (o_p, \ol{\cU_{\ve (p)}(\{\partial F (p)\}^*)}u) \ge \delta (p)
\end{equation}
where 
$\ol{\cU_{\ve (p)}(\{\partial F (p)\}^*)}$ 
is the closure of the $\ve (p)$-open 
neighborhood $\cU_{\ve (p)}(\{\partial F (p)\}^*)$ 
of $\{\partial F (p)\}^*$ in $\cL (T_{F(p)}N, T_p M)$. 
Note that 
$\ol{\cU_{\ve (p)}(\{\partial F (p)\}^*)}u$ is a 
compact convex subset of $T_pM$. 
Indeed, let 
$\ol{\B_{\ve (p)}(\tilde{o})}$ be a closed ball 
with centre the origin $\tilde{o}$  
of $\cL (T_{F(p)}N, T_p M)$ and radius $\ve (p)$. 
Since $\ol{\B_{\ve (p)}(\tilde{o})}$ 
and $\{\partial F (p)\}^*$ are convex in 
$\cL (T_{F(p)}N, T_p M)$, 
and since 
$\ol{\cU_{\ve (p)}(\{\partial F (p)\}^*)} 
= \{\partial F (p)\}^* + \ol{\B_{\ve (p)}(\tilde{o})}$, 
\cite[Theorem 3.1]{Ro} shows 
that $\ol{\cU_{\ve (p)}(\{\partial F (p)\}^*)}$ is a 
compact convex subset in $\cL (T_{F(p)}N, T_p M)$, 
and hence $\ol{\cU_{\ve (p)}(\{\partial F (p)\}^*)} u$ 
is also in $T_pM$.\par 
Fix $x \in B_{r(p)}(p)$. 
Since $\|g^*(u)\|\ge \delta (p)$ 
for all $g^* \in \ol{\cU_{\ve (p)}(\{\partial F (p)\}^*)}$ 
by Eq.\,\eqref{2018_01_11_lem2.12_1}, and since 
$\tau_{x}^{p}$ is an isometry, 
$\|\tau_{x}^{p} (g^*(u))\| = \|g^*(u)\|\ge \delta (p)$ holds for all 
$g^* \in \ol{\cU_{\ve (p)}(\{\partial F (p)\}^*)}$, 
and hence 
$
d_{T_xM} 
( 
o_x, 
\tau_{x}^{p} 
( 
\ol{\cU_{\ve (p)}(\{\partial F (p)\}^*)}u
)
) \ge \delta (p)$. 
Since $\tau_{x}^{p}$ is linear, 
$\tau_{x}^{p} 
( 
\ol{\cU_{\ve (p)}(\{\partial F (p)\}^*)}u
)$ is a compact convex subset of $T_xM$. 
There is therefore a point 
$b^{(u,\,x)}\in 
\Bd
( 
\tau_{x}^{p} 
( 
\ol{\cU_{\ve (p)}(\{\partial F (p)\}^*)}u
)
)$ 
such that 
\begin{equation}\label{2018_02_10_lem2.12_b}
\|b^{(u,\,x)}\| 
= 
d_{T_xM} 
( 
o_x, 
\tau_{x}^{p} 
( 
\ol{\cU_{\ve (p)}(\{\partial F (p)\}^*)}u
)
) \ge \delta (p).
\end{equation}
Define a unit tangent vector 
$V_x^{(u)}$ at $x$ by 
$
V_x^{(u)}  := b^{(u,\,x)}/ \|b^{(u,\,x)}\| 
\in \Sph^{m-1}_x$. 
Lemma \ref{2018_02_13_lem3.10} shows that 
for $\ve (p)$ as above 
there is a constant $\lambda (p) \in (0, r(p)/2)$ 
for $\mu (p, \ve) = 2\lambda (p) := 2\lambda (p, \ve (p))$ 
such that 
$
\tau^{x}_{p} \circ \{\partial F (x)\}^* \circ \tau^{F(p)}_{F(x)} \subset 
\cU_{\ve (p)} (\{\partial F (p)\}^*)
$ 
holds for all $x \in B_{2\lambda (p)} (p)$. 
Lemma \ref{2018_02_01_lem2.2} 
gives assertions (i) and (ii). 
Since 
$\cU_{\ve (p)} (\{\partial F (p)\}^*) 
\subset 
\ol{\cU_{\ve (p)}(\{\partial F (p)\}^*)}$, 
for any $x \in B_{2\lambda (p)} (p)$ we obtain 
\begin{equation}\label{2018_02_10_lem2.12_d}
\{\partial F (x)\}^*\tau^{F(p)}_{F(x)}(u) 
\subset 
\tau_{x}^{p} 
( 
\ol{\cU_{\ve (p)}(\{\partial F (p)\}^*)}u
) \subset T_xM. 
\end{equation}

Define the line $\ell :\R \to T_xM$ 
by $\ell (t):= t\,V_x^{(u)}$. Note that 
$\ell$ is passing through 
$\tau_{x}^{p} 
( 
\ol{\cU_{\ve (p)}(\{\partial F (p)\}^*)}u
)$. Fix $g^* \in \{\partial F (x)\}^*$. 
From Eq.\,\eqref{2018_02_10_lem2.12_d} 
there is a unique constant $t_0 >0$ 
such that  
\begin{equation}\label{2018_02_10_lem2.12_e}
t_0 \ge \|b^{(u,\,x)}\| \quad 
\text{and} \quad 
\angle (
\ora{o_x\,\ell (t_0)}, \,
\ora{\ell (t_0)g^*(\tau^{F(p)}_{F(x)}(u))}
) = \frac{\pi}{2}, 
\end{equation}
i.e., $\ell (t_0)$ is the foot of the perpendicular 
from $g^*(\tau^{F(p)}_{F(x)}(u))$ to the line $\ell$. 
Set
$
\theta 
:= \angle 
(
\ora{o_x\,\ell (t_0)}, 
\ora{o_x\,g^*(\tau^{F(p)}_{F(x)}(u))}
)$. 
Note here that $\theta \in [0, \pi/2)$ because 
$\tau_{x}^{p} 
( 
\ol{\cU_{\ve (p)}(\{\partial F (p)\}^*)}u
)$ is convex in $T_xM$ 
and $o_x\not\in 
\tau_{x}^{p} 
( 
\ol{\cU_{\ve (p)}(\{\partial F (p)\}^*)}u
)$. 
It follows 
from 
Eqs.\,\eqref{2018_02_10_lem2.12_b} 
and \eqref{2018_02_10_lem2.12_e}  
that 
\begin{align}\label{2018_10_02_lem2.12_f}
\langle V_x^{(u)}, g^*(\tau^{F(p)}_{F(x)}(u))\rangle_M 
&
=\|g^*(\tau^{F(p)}_{F(x)}(u))\| \cos \theta
= \|\ell (t_0)\| = t_0 \ge \|b^{(u,\,x)}\|
\ge \delta (p),
\end{align}
which is assertion (iii).\par 
Furthermore, Eq.\,\eqref{2018_10_02_lem2.12_f} gives 
$
\delta (p) \le \langle 
V_x^{(u)}, (g^* \circ \tau^{F(p)}_{F(x)}) (u)
\rangle_M \le \| g^* (\tau^{F(p)}_{F(x)} (u)) \|$, 
which shows that every $g^* \in \{\partial F (x)\}^*$ is 
injective for all $x \in B_{2 \lambda (p)}(p)$. 
Since 
$\ra (g) = \ra (g^*)=n$ for all $g \in \partial F (x)$ as 
$x \in B_{2 \lambda (p)}(p)$, 
any point $x \in B_{2 \lambda (p)}(p)$ is nonsingular for $F$. 
Assertion (iv) thus holds.
$\qedd$
\end{proof}
 
\begin{proposition}\label{2018_01_28_prop2.12} 
Assume $m \le n$. 
If a point $p \in M$ is nonsingular for $F$, 
then there are two constants 
$\lambda (p) >0$ and 
$\delta (p)>0$ satisfying the following properties: 
\begin{enumerate}[{\rm (i)}] 
\item $B_{2 \lambda (p)} (p)$ satisfies \eqref{2018_01_28_Sect2_a} 
and \eqref{2018_01_28_Sect2_b} of Theorem \ref{Whitehead}; 
\item $F|_{B_{2 \lambda (p)} (p)}$ is 
a Lipschitz map from $B_{2 \lambda (p)} (p)$ 
into $B_{t (p)}(F(p))$; 
\item For any $u \in \Sph_p^{m-1}:=\{w \in T_p M\,|\,\|w\|=1\}$ and 
any $x \in B_{2 \lambda (p)} (p)$ there is 
a vector $V_{F(x)}^{(u)} \in \Sph_{F(x)}^{n-1}:=\{v \in T_{F(x)} N\,|\,\|v\|=1\}$ such that 
$
\langle (g \circ \tau^p_x) (u), V_{F(x)}^{(u)} \rangle_N \ge \delta (p)
$ 
holds for all $g \in \partial F(x)$; 
\item Every $x \in B_{2 \lambda (p)} (p)$ 
is nonsingular for $F$.
\end{enumerate}
\end{proposition}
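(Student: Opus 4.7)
The strategy is to mirror the proof of Proposition \ref{2018_02_12_prop3.11}, but carried out directly with $\partial F$ acting on the source rather than with its adjoint, since the present hypothesis $m \le n$ together with nonsingularity at $p$ forces every $g \in \partial F(p)$ to be \emph{injective} (of rank $m = \min\{m,n\}$). Concretely, for each fixed $u \in \Sph_p^{m-1}$, I would first consider the evaluation
\[
\partial F(p)\,u := \{ g(u) : g \in \partial F(p) \} \subset T_{F(p)}N,
\]
which is compact and convex by Remark \ref{2018_01_12_rem2.4} (ii) and linearity of evaluation. Injectivity of each $g$ gives $o_{F(p)} \notin \partial F(p)\,u$, so there is a nearest boundary point $a^{(u)}$ with $\|a^{(u)}\| > 0$. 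Setting
\[
\delta(p) := \tfrac{1}{2}\min\{\|a^{(w)}\| : w \in \Sph_p^{m-1}\} > 0
\]
(the minimum being attained by compactness of $\Sph_p^{m-1}$), I would then pick $\ve(p) > 0$ so small that the compact convex thickening $\overline{\cU_{\ve(p)}(\partial F(p))}\,u$ (convexity via \cite[Theorem 3.1]{Ro}, exactly as in the proof of Proposition \ref{2018_02_12_prop3.11}) still satisfies $d_{T_{F(p)}N}(o_{F(p)}, \overline{\cU_{\ve(p)}(\partial F(p))}\,u) \ge \delta(p)$ for every $u \in \Sph_p^{m-1}$.

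Next, Remark \ref{2018_01_12_rem2.4} (vi) supplies a constant $\lambda(p) \in (0, r(p)/2)$ with $2\lambda(p)$ playing the role of $\mu(p,\ve(p))$, so that for every $x \in B_{2\lambda(p)}(p)$,
\[
\tau^{F(x)}_{F(p)} \circ \partial F(x) \circ \tau^p_x \subset \cU_{\ve(p)}(\partial F(p)),
\]
where $\tau^{F(x)}_{F(p)}$ is defined because $F(B_{2\lambda(p)}(p)) \subset B_{t(p)}(F(p))$ by Lemma \ref{2018_02_01_lem2.2}. Assertions (i) and (ii) are then immediate from Lemma \ref{2018_02_01_lem2.2}. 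Evaluating the above inclusion at $u$ and applying the isometry $\tau^{F(p)}_{F(x)}$ yields
\[
g(\tau^p_x(u)) \in \tau^{F(p)}_{F(x)}\bigl(\overline{\cU_{\ve(p)}(\partial F(p))}\,u\bigr) \subset T_{F(x)}N, \qquad g \in \partial F(x),
\]
a compact convex set at distance $\ge \delta(p)$ from $o_{F(x)}$ because parallels are isometries. Taking $b^{(u,x)}$ to be a nearest boundary point and setting $V^{(u)}_{F(x)} := b^{(u,x)}/\|b^{(u,x)}\| \in \Sph^{n-1}_{F(x)}$, the same perpendicular-foot argument used in the proof of Proposition \ref{2018_02_12_prop3.11}, now performed in $T_{F(x)}N$ rather than in $T_xM$, gives
\[
\langle g(\tau^p_x(u)),\, V^{(u)}_{F(x)}\rangle_N \ge \|b^{(u,x)}\| \ge \delta(p),
\]
which is (iii). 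In particular $\|g(\tau^p_x(u))\| \ge \delta(p) > 0$ for every $u \in \Sph_p^{m-1}$; since $\tau^p_x$ is an isometric isomorphism, each $g \in \partial F(x)$ is injective, hence of rank $m = \min\{m,n\}$, giving (iv).

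I do not anticipate any substantial obstacle, as the argument is in fact cleaner than that of Proposition \ref{2018_02_12_prop3.11}: injectivity of each $g$ is exactly what the maximal-rank hypothesis delivers when $m \le n$, so the passage through the adjoint disappears entirely. The only point requiring care is the definability of $\tau^{F(x)}_{F(p)}$ throughout, which is secured by shrinking $\lambda(p)$ further, if necessary, to keep $F(B_{2\lambda(p)}(p))$ inside the strongly convex ball $B_{t(p)}(F(p))$.
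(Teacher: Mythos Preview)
Your proposal is correct and follows essentially the same approach as the paper: the paper's own proof fixes $u \in \Sph_p^{m-1}$, observes that $\partial F(p)u$ is compact and convex with $o_{F(p)} \notin \partial F(p)u$ by injectivity, and then invokes Remark \ref{2018_01_12_rem2.4} (vi) in place of Lemma \ref{2018_02_13_lem3.10} to run the same nearest-point and perpendicular-foot argument as in Proposition \ref{2018_02_12_prop3.11}, leaving the details to the reader. You have simply written out those details faithfully.
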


\begin{proof}
Fix $p \in M$ nonsingular for $F$, and $u \in \Sph_p^{m-1}$. 
Set $\partial F(p)u:=\{g (u)\,|\, g \in \partial F(p)\}$. 
Remark \ref{2018_01_12_rem2.4} (ii) shows that 
$\partial F(p)u$ is compact and convex in $T_{F(p)}N$. 
Since $p$ is nonsingular for $F$, $\partial F(p)$ has maximal rank $m$, hence every $g\in \partial F(p)$ is injective, 
and $o_{F(p)} \not\in \partial F(p)u$. Thanks to Remark \ref{2018_01_12_rem2.4} (vi), 
the same argument as in the proof of Proposition \ref{2018_02_12_prop3.11} 
works for $\partial F(p)u$. Details are left to the reader.$\qedd$
\end{proof}

\begin{remark}We give here three remarks on Propositions 
\ref{2018_02_12_prop3.11} and \ref{2018_01_28_prop2.12}.  
\begin{enumerate}[{\rm (i)}]
\item Proposition \ref{2018_02_12_prop3.11}
is a completely new result.
\item 
Clarke first showed the same statement as in Proposition \ref{2018_01_28_prop2.12} 
in the case both $M$ and $N$ 
are Euclidean spaces 
of the same dimension, 
see \cite[Lemma 3]{C2}.
\item 
Without mentioning Proposition \ref{2018_01_28_prop2.12} we applied it in the proof 
of \cite[Lemma 2.21]{KT}. We did not 
give the proof there. It is provided here. 
\end{enumerate}
\end{remark}

\begin{corollary}\label{2018_09_20_cor3.14}
The set of all singular points of $F$ is a closed set in $M$.
\end{corollary}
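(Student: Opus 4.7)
The plan is to show that the complement of the singular set, namely the set of nonsingular points of $F$, is open in $M$; then the singular set is automatically closed. This reduces the claim to the assertion (iv) already established in Propositions \ref{2018_02_12_prop3.11} and \ref{2018_01_28_prop2.12}.

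First I would let $\Sing(F)$ denote the set of singular points of $F$ in the sense of Clarke, and set $\cR(F):= M\setminus \Sing(F)$. I then fix an arbitrary $p \in \cR(F)$ and split into two cases based on the dimensions of $M$ and $N$: if $m \ge n$, apply Proposition \ref{2018_02_12_prop3.11}; if $m \le n$, apply Proposition \ref{2018_01_28_prop2.12}. (Either proposition applies when $m = n$, so the two cases cover all possibilities.) In either case, assertion (iv) of the relevant proposition supplies a constant $\lambda(p) >0$ such that every point of the open ball $B_{2\lambda (p)}(p)$ is nonsingular for $F$, i.e. $B_{2\lambda(p)}(p) \subset \cR(F)$.

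Consequently, $\cR(F) = \bigcup_{p \in \cR(F)} B_{2\lambda(p)}(p)$ is a union of open balls, hence open in $M$, so $\Sing(F)= M\setminus \cR(F)$ is closed. There is no genuine obstacle here since the substantive work — exhibiting an open ball of nonsingular points around each nonsingular point — has already been carried out in the proofs of Propositions \ref{2018_02_12_prop3.11} and \ref{2018_01_28_prop2.12}; the present corollary is merely the topological rephrasing of that local openness property.
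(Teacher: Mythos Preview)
Your proof is correct and follows essentially the same approach as the paper: both show that $M\setminus\Sing(F)$ is open by fixing a nonsingular point $p$, invoking assertion (iv) of Proposition~\ref{2018_02_12_prop3.11} (when $m\ge n$) or Proposition~\ref{2018_01_28_prop2.12} (when $m\le n$) to obtain an open ball $B_{2\lambda(p)}(p)$ of nonsingular points, and concluding that the singular set is closed.
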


\begin{proof}
Let $\Sing (F)$ be the set of all singular points of $F$. 
We will 
show that $M \setminus \Sing (F)$ is open in $M$. Fix $p \in M \setminus \Sing (F)$. 
We first assume $m \ge n$. By the property (iv) of Proposition \ref{2018_02_12_prop3.11} we can find a constant $\lambda (p) >0$ such that any point $x \in B_{2 \lambda (p)}(p)$ is nonsingular for $F$, hence 
$B_{2 \lambda (p)}(p) \subset M \setminus \Sing (F)$, 
and finally $M \setminus \Sing (F)$ is open.  
By applying that of Proposition \ref{2018_01_28_prop2.12}
the same proof works for $m \le n$.$\qedd$ 
\end{proof}

\section{Smooth approximation of Lipschitz 
maps between Riemannian manifolds}\label{2018_01_18_Sect4}

Working from results in \cite{GW}, \cite{G2}, \cite{GS}, \cite{H}, \cite{K}, and \cite{Sh1}, we define a smooth approximation 
of an arbitrary Lipschitz map between Riemannian manifolds. 
Throughout this section 
let $M$ be a compact Riemannian manifold 
of dimension $m$, $N$ a Riemannian 
manifold of dimension $n$, 
$d_M$ and $d_N$ the distance functions 
of $M$ and $N$, respectively, 
$F: M\to N$ a Lipschitz map, and $\inj (M)$ the injectivity radius of $M$. Note that $(0, \inj (M)/2) \subset \R$ is not empty because 
$M$ is compact. 

\begin{lemma}\label{2018_03_04_lem4.1}
There is a finite set $\{p_1, p_2, \ldots, p_k\} \subset M$ 
such that 
\begin{enumerate}[{\rm (I)}]
\item for each $p_i \in \{p_1, p_2, \ldots, p_k\}$ 
both $B_{r (p_i)}(p_i) \subset M$ and 
$B_{t (p_i)}(F(p_i)) \subset N$ satisfy the 
properties (i) and (ii) of Lemma \ref{2018_02_01_lem2.2} for $p=p_i$; 
\item $r (p_i) \in (0, \inj (M)/2)$ for all 
$p_i \in \{p_1, p_2, \ldots, p_k\}$; 
\item $M= \bigcup_{i=1}^k B_{r(p_i)}(p_i)$.
\end{enumerate} 
\end{lemma}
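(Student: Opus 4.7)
The plan is to construct, for each $p \in M$, a radius $r(p)$ small enough to simultaneously meet the three desiderata of the lemma, and then invoke compactness of $M$ to pass to a finite subcover. The main ingredients are Lemma \ref{2018_02_01_lem2.2}, the strict positivity of $\inj(M)$ on the compact $M$, and Lipschitz (hence continuous) behaviour of $F$.

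First, I would fix $p \in M$ and apply Lemma \ref{2018_02_01_lem2.2} at $p$ to obtain strongly convex balls $B_{\rho(p)}(p) \subset M$ and $B_{t(p)}(F(p)) \subset N$ enjoying properties \eqref{2018_02_01_lem2.2_i}--\eqref{2018_02_01_lem2.2_iii} of that lemma. The point is that these properties are hereditary under shrinking the source ball: any ball $B_{r}(p)$ with $0 < r \le \rho(p)$ is still strongly convex and still has $\exp_p|_{\B_r(o_p)}$ a diffeomorphism, because sub-balls of a Whitehead ball inherit both properties from Theorem \ref{Whitehead}.

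Next, I would shrink $r(p)$ to meet conditions (I) and (II) simultaneously. Since $M$ is compact, $\inj(M)>0$, so $\inj(M)/2>0$. Define
\[
r(p) := \tfrac{1}{2}\min\{\rho(p),\,\inj(M)/2\}.
\]
Then $r(p) \in (0, \inj(M)/2)$, which takes care of (II), and $B_{r(p)}(p) \subset B_{\rho(p)}(p)$, so $B_{r(p)}(p)$ still satisfies Whitehead's \eqref{2018_01_28_Sect2_a} and \eqref{2018_01_28_Sect2_b}. To ensure the image condition $F(B_{r(p)}(p)) \subset B_{t(p)}(F(p))$ of Lemma \ref{2018_02_01_lem2.2} \eqref{2018_02_01_lem2.2_ii} survives this shrinkage, I would further reduce $r(p)$ if necessary using the Lipschitz continuity of $F$: if $L$ is a Lipschitz constant of $F$ on some neighbourhood of $p$, then taking $r(p)$ also below $t(p)/L$ guarantees $d_N(F(x),F(p)) \le L\,d_M(x,p) < t(p)$ for all $x \in B_{r(p)}(p)$, so condition (I) is preserved.

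Finally, the family $\{B_{r(p)}(p)\}_{p \in M}$ is an open cover of $M$. By compactness of $M$, I extract a finite subcover $\{B_{r(p_i)}(p_i)\}_{i=1}^{k}$, which yields (III) while inheriting (I) and (II) from each individual $p_i$. There is really no serious obstacle here; the only thing one has to be careful about is ordering the shrinkages of $r(p)$ so that no earlier property is spoiled by a later reduction, which is why I would perform all three bounds in a single $\min$ at the end.
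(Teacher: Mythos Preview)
Your proposal is correct and follows essentially the same route as the paper: apply Lemma~\ref{2018_02_01_lem2.2} at every point, shrink the source radius below $\inj(M)/2$, and extract a finite subcover by compactness. One harmless redundancy: the extra reduction via the Lipschitz constant to preserve $F(B_{r(p)}(p)) \subset B_{t(p)}(F(p))$ is unnecessary, since $B_{r(p)}(p) \subset B_{\rho(p)}(p)$ already gives $F(B_{r(p)}(p)) \subset F(B_{\rho(p)}(p)) \subset B_{t(p)}(F(p))$.
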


\begin{proof}
This follows immediately since $M$ is compact.$\qedd$ 
\end{proof}

By applying the Nash embedding theorem \cite{N}, 
$N$ can be isometrically embedded into the Euclidean space $\R^\ell$ 
with the canonical Riemannian metric 
$\langle\,\cdot\,,\,\cdot\,\rangle$ 
where $\ell \ge \max\{m, n+1\}$. 
$F$ can be regarded as a Lipschitz map 
from $M$ into $\R^\ell$, and hence we 
set 
\[
\wt{F}:= F : M \to N \subset \R^\ell.
\]
In the case where $N=\R$, this is not done.\par 
From now on we use the notation 
$\inj(M)$, 
$ \{B_{r(p_i)}(p_i)\}_{i=1}^{k}$, 
$\{B_{t(p_i)}(F(p_i))\}_{i=1}^{k}$, 
and $\wt{F}$ 
in the sense above.

\subsection{The local smooth approximation 
of Lipschitz maps}

In this subsection we define the local smooth approximation of 
$\wt{F}: M \to N \subset \R^\ell$ on each strongly convex ball $B_{r(p_i)} (p_i) \subset M$ with convolution smoothing.\par 
Fix $p_i \in \{p_1, p_2, \ldots, p_k\}$. 
Since 
$\exp_{p_i}|_{\B_{\inj(M)}(o_{p_i})}: 
\B_{\inj(M)}(o_{p_i}) \to B_{\inj(M)} (p_i)$ 
is a diffeomorphism, we can define the map 
$\cF^{(i)}: \B_{\inj(M)}(o_{p_i}) \to N \subset \R^\ell$ by 
\[
\cF^{(i)} := \wt{F} \circ \exp_{p_i}|_{\B_{\inj(M)}(o_{p_i})}.
\]
Choose an orthonormal basis 
$e_1^{(i)}, e_2^{(i)}, \ldots, e_m^{(i)}$ 
for $T_{p_i}M$. 
Using coordinates $(y_1^{(i)},y_2^{(i)}, \ldots, y_m^{(i)})$ with respect to $e_1^{(i)}, e_2^{(i)}, \ldots, e_m^{(i)}$ on $T_{p_i}M$, 
we identify $T_{p_i}M$ with $\R^m$. 
Let 
$(z_1,z_2, \ldots, z_\ell)$ be the standard coordinates 
of $\R^\ell$. 
We then have the coordinate representation 
$\cF^{(i)} = (\cF_1^{(i)}, \cF_2^{(i)}, \ldots, \cF_\ell^{(i)})$ 
of $\cF^{(i)}$ defined by 
$\cF_j^{(i)}
:= z_j \circ \cF^{(i)}$ 
for each $j\in \{1,2,\ldots, \ell\}$. 
Moreover let $\rho^{(i)} : T_{p_i}M \to \R$ be a smooth 
function given by 
\[
\rho^{(i)} (y) = 
\begin{cases}
\ \alpha \cdot e^{-1/(1-\|y\|^2)}  \ & (y \in \B_1 (o_{p_i})),\\
\ 0  \ & (y \in T_{p_i}M \setminus \B_1 (o_{p_i}))
\end{cases}
\]
where the constant $\alpha$ is chosen so that 
$\int_{y \,\in\, T_{p_i}M}\rho^{(i)} (y) dy =1$. 
For an $\ve \in (0, \inj(M)/2)$ the
Riemannian mollifier $\rho_\ve^{(i)}$ 
is then defined by 
$\rho_\ve^{(i)} (y) := \rho^{(i)}( y / \ve )/ \ve^m$ 
for all $y \in T_{p_i}M$, 
which is a nonnegative smooth function 
on $T_{p_i}M$ and satisfies  
\begin{equation}\label{2018_04_18_def4.1_2}
\supp \rho_\ve^{(i)} = \ol{\B_\ve (o_{p_i})} \quad 
\text{and} \quad 
\int _{T_{p_i}M} \rho_\ve^{(i)} (y)dy=1, 
\end{equation}
see for instance \cite{H}, \cite{Leoni}, 
or \cite{Y}. We now define the convolution smoothing 
of $\wt{F}$.

\begin{definition}\label{2018_01_20_def4.1} 
Fix $p_i \in \{p_1, p_2, \ldots, p_k\}$ and 
$\ve \in (0, \inj(M)/2)$. The map 
$\wt{F}_\ve^{(p_i)}:B_{r(p_i)}(p_i)\to \R^\ell$ 
is defined as follows. For any $q \in B_{r(p_i)}(p_i)$, 
\begin{align}\label{2018_01_20_def4.1_1}
&\wt{F}^{(p_i)}_\ve (q)
:=\int _{y\,\in\,T_{p_i}M} \rho_\ve^{(i)} (y) 
\cF^{(i)}(\exp^{-1}_{p_i} q-y) dy\\
& := \Big(
\int _{T_{p_i}M} \rho_\ve^{(i)} (y)\cF_1^{(i)}(\exp^{-1}_{p_i} q-y) dy,\, 
\ldots\,,
\int _{T_{p_i}M} \rho_\ve^{(i)} (y) 
\cF_\ell^{(i)}(\exp^{-1}_{p_i} q-y) dy \Big).\notag
\end{align}
\end{definition}

\begin{remark}\label{2018_01_25_rem4.2}
\begin{enumerate}[{\rm (i)}]
\item 
Since 
\[
\int _{T_{p_i}M} 
\rho_\ve^{(i)} (y)\cF_j^{(i)}(\exp^{-1}_{p_i} q-y) dy
= \int _{T_{p_i}M} 
\rho_\ve^{(i)} (\exp^{-1}_{p_i} q-y)
\cF_j^{(i)}(y) dy
\]
for each $j =1,2,\ldots,\ell$ (see for instance \cite{H}, 
\cite{Leoni}, or \cite{Y}), 
we have, for any $q \in B_{r(p_i)}(p_i)$, 
\[
\wt{F}^{(p_i)}_\ve (q)
=\int _{T_{p_i}M} \rho_\ve^{(i)} (y) 
\cF^{(i)}(\exp^{-1}_{p_i} q-y) dy
=\int _{T_{p_i}M} \rho_\ve^{(i)} (\exp^{-1}_{p_i} q-y) 
\cF^{(i)}(y) dy.
\]
\item
Fix $q \in B_{r (p_i)}(p_i)$ and 
$y \in \B_\ve (o_{p_i})$. We see, 
by Lemma \ref{2018_03_04_lem4.1} (II), that 
\begin{equation}\label{2018_01_25_rem4.2_2_b}
\|\exp^{-1}_{p_i} q-y\| \le \|\exp^{-1}_{p_i} q\| + \|y\| 
< r(p_i) + \ve < \inj (M),
\end{equation}
and hence $\cF^{(i)} (\exp^{-1}_{p_i} q-y)$ exists. 
Moreover, since $\supp \rho_\ve^{(i)} = \ol{\B_\ve (o_{p_i})}$,  
\begin{equation}\label{2018_01_25_rem4.2_2_a}
\wt{F}^{(p_i)}_\ve (q)
=\int _{y\,\in\,\B_\ve (o_{p_i})} \rho_\ve^{(i)} (y) 
\cF^{(i)}(\exp^{-1}_{p_i} q-y) dy
\end{equation}
holds, 
and hence $\wt{F}^{(p_i)}_\ve (q)$ exists.
\item 
Since each 
$
\int _{T_{p_i}M} 
\rho_\ve^{(i)} (y)\cF_j^{(i)}(\exp^{-1}_{p_i} q-y) 
dy
$ 
is smooth 
(see for instance \cite{H}, \cite{Leoni}, 
or \cite{Y}), 
$\wt{F}^{(p_i)}_\ve$ is smooth. 
\item 
In the case where $N=\R$, 
the 
convolution smoothing \eqref{2018_01_20_def4.1_1} of 
the Lipschitz function 
$F :M\to \R$ is given by 
\begin{equation}\label{2018_09_25_rem4.2_1} 
\wt{F}^{(p_i)}_\ve (q)
:=\int _{y\,\in\,T_{p_i}M} \rho_\ve^{(i)} (y) 
(F\circ \exp_{p_i})(\exp^{-1}_{p_i} q-y) dy
\end{equation}
for all $q \in B_{r(p_i)}(p_i)$. 
\end{enumerate}
\end{remark}

\begin{definition}
For each $\ve \in (0, \inj(M)/2)$ let 
\begin{equation}\label{2018_02_20_lem4.7_1}
\Lambda (\ve)
:=\max
\{
\L (\exp_{p_i}|_{\B_{r(p_i)+\ve}(o_{p_i})})\,|\, 
p_i \in \{p_1, p_2, \ldots, p_k\}
\}
\end{equation}
where $\L (\exp_{p_i}|_{\B_{r(p_i)+\ve}(o_{p_i})})$ 
is the Lipschitz constant of $\exp_{p_i}|_{\B_{r(p_i)+\ve}(o_{p_i})}$, i.e., 
\[
\L (\exp_{p_i}|_{\B_{r(p_i)+\ve}(o_{p_i})})
:=\sup
\left
\{\frac{d_M (\exp_{p_i} u, \exp_{p_i} v)}{\|u-v\|} \, \bigg| \,u,v \in \B_{r(p_i)+\ve} (o_{p_i} ), u\ne v
\right\}.
\] 
\end{definition}

\begin{remark}\label{2018_02_27_rem4.5}
Since $r(p_i)+\ve < \inj(M)$ for each 
$p_i \in \{p_1, p_2, \ldots, p_k\}$, 
$\exp_{p_i}|_{\B_{r(p_i)+\ve}(o_{p_i})}$ 
is a diffeomorphism, and hence 
$\Lambda (\ve)$ converges to a positive constant 
as $\ve \downarrow 0$.
\end{remark}

The next lemma tells us that $\wt{F}^{(p_i)}_\ve$ is a local 
smooth approximation of $\wt{F}$ on $B_{r (p_i)}(p_i)$.

\begin{lemma}{\rm (\cite[Lemma 2.16]{KT})}\label{2018_01_25_lem4.3} For each 
$\ve \in (0, \inj(M)/2)$ we have 
$\|\wt{F}^{(p_i)}_\ve (q)-\wt{F}(q)\|
\le \ve \cdot \Lambda (\ve) \cdot \L(F)$ 
for all $q\in B_{r (p_i)}(p_i)$ 
($i \in \{1, 2, \ldots, k\}$) 
where $\|\,\cdot\, \|$ is 
the Euclidean norm of $\R^\ell$, 
and $\L(F)$ denotes the Lipschitz constant 
of $F$, i.e., 
\[
\L(F):=\sup\left\{\frac{d_N(F(x),F(y))}{d_M(x,y)} \, \bigg| \,x,y\in M, x\ne y\right\}.
\]
\end{lemma}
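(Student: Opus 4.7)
The plan is to estimate the difference $\wt{F}^{(p_i)}_\ve(q)-\wt{F}(q)$ by exploiting that $\rho_\ve^{(i)}$ is a probability density supported in $\ol{\B_\ve(o_{p_i})}$, and then to pull the Lipschitz control of $F$ back through the exponential chart.

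First, since $\int_{T_{p_i}M} \rho_\ve^{(i)}(y)\,dy=1$ by \eqref{2018_04_18_def4.1_2} and $\cF^{(i)}(\exp^{-1}_{p_i} q)=\wt{F}(q)$, I would rewrite
\[
\wt{F}^{(p_i)}_\ve(q)-\wt{F}(q)=\int_{\B_\ve(o_{p_i})}\rho_\ve^{(i)}(y)\bigl[\cF^{(i)}(\exp^{-1}_{p_i}q-y)-\cF^{(i)}(\exp^{-1}_{p_i}q)\bigr]dy,
\]
where the domain of integration is shrunk using $\supp\rho_\ve^{(i)}=\ol{\B_\ve(o_{p_i})}$. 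Taking the Euclidean norm and pulling it inside the integral reduces the claim to bounding the integrand uniformly in $y\in\ol{\B_\ve(o_{p_i})}$.

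Next, I would control the integrand by composing three Lipschitz estimates. For $y\in\ol{\B_\ve(o_{p_i})}$ and $q\in B_{r(p_i)}(p_i)$, the computation \eqref{2018_01_25_rem4.2_2_b} shows that both $\exp^{-1}_{p_i}q$ and $\exp^{-1}_{p_i}q-y$ lie in $\B_{r(p_i)+\ve}(o_{p_i})$, which is where the exponential map has Lipschitz constant at most $\Lambda(\ve)$ by \eqref{2018_02_20_lem4.7_1}. Using that $N$ is isometrically embedded in $\R^\ell$ (so Euclidean chord $\le$ intrinsic distance in $N$) and that $F=\wt F$ has Lipschitz constant $\L(F)$, I would estimate
\[
\bigl\|\cF^{(i)}(\exp^{-1}_{p_i}q-y)-\cF^{(i)}(\exp^{-1}_{p_i}q)\bigr\|\le d_N\bigl(\wt F(\exp_{p_i}(\exp^{-1}_{p_i}q-y)),\wt F(q)\bigr),
\]
which is at most $\L(F)\,d_M(\exp_{p_i}(\exp^{-1}_{p_i}q-y),q)\le \L(F)\,\Lambda(\ve)\,\|y\|\le \L(F)\,\Lambda(\ve)\,\ve$.

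Finally, inserting this uniform bound and using $\int\rho_\ve^{(i)}=1$ would yield $\|\wt{F}^{(p_i)}_\ve(q)-\wt{F}(q)\|\le \ve\cdot\Lambda(\ve)\cdot\L(F)$, as desired. The only subtle point is the very first inequality in the three-step chain, namely that the Euclidean norm in $\R^\ell$ is dominated by the intrinsic Riemannian distance in the isometrically embedded submanifold $N$; apart from this standard observation the argument is a routine mollifier estimate, so I do not anticipate serious obstacles.
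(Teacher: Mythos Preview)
Your argument is correct and is exactly the standard mollifier estimate one expects here. Note, however, that the present paper does not actually supply a proof of this lemma: it is stated with a citation to \cite[Lemma~2.16]{KT} and left without proof, so there is nothing in the paper to compare against beyond confirming that your approach is the natural one (and almost certainly the one in \cite{KT}).
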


Fix $p_i \in \{p_1, p_2, \ldots, p_k\}$ and 
$\ve \in (0, \inj(M)/2)$. We now construct 
Jacobi fields on $B_{\inj(M)} (p_i)$ from 
geodesic variations with the initial point $p_i$. 
For each $q \in B_{r(p_i)}(p_i)$ 
we set $\Sph_q^{m-1}:=\{v \in T_qM\,|\,\|v\|=1\}$. 
Fix $v \in \Sph_q^{m-1}$. 
For $\delta >0$ sufficiently small 
let $c_{v}: (-\delta, \delta)\to B_{r(p_i)}(p_i)$ 
be the minimal geodesic segment defined by 
$c_{v} (s):= \exp_q s v$. Since $c_{v} (s) \in B_{r(p_i)}(p_i)$ 
for all $s \in (-\delta, \delta)$, 
we observe, by the same argument 
as in Eq.\,\eqref{2018_01_25_rem4.2_2_b}, that 
\begin{equation}\label{2018_03_29_point}
\exp_{p_i}^{-1}c_{v} (s)-y \in \B_{\inj(M)} (o_{p_i})
\end{equation}
for all $s \in (-\delta, \delta)$ 
and $y \in \B_\ve (o_{p_i})$. 
Since
$\exp_{p_i} (\exp_{p_i}^{-1}c_{v} (s)-y) 
\in B_{\inj(M)} (p_i)$ 
holds for all $s \in (-\delta, \delta)$ 
and $y \in \B_\ve (o_{p_i})$ 
from Eq.\,\eqref{2018_03_29_point}, 
for each 
$y \in \B_\ve (o_{p_i})$ 
we can define the smooth map 
$\varphi_y^{(v)}: [0,1]\times (-\delta, \delta) \to B_{\inj (M)}(p_i)$ by 
$\varphi_y^{(v)} (t,s)
:= \exp_{p_i} t [\exp_{p_i}^{-1}c_{v} (s)-y]$. 
The map $\varphi_y^{(v)}$ is a geodesic 
variation with the initial point $p_i$ 
of the minimal geodesic segment 
\begin{equation}\label{2018_02_27_geodesic}
\varphi_y^{(v)} (t,0)
=\exp_{p_i} t (\exp_{p_i}^{-1}q-y)
\end{equation}
emanating from $\varphi_y^{(v)} (0,0)= p_i$ and 
ending at 
$\varphi_y^{(v)} (1,0)= \exp_{p_i}  (\exp_{p_i}^{-1}q-y)$. 
We get the Jacobi field 
\begin{equation}\label{2018_02_21_Jacobi}
J_y^{(v)} (t) := \frac{\partial \varphi_y^{(v)}}{\partial s}(t,0)
\end{equation}
along $\varphi_y^{(v)} (t,0)$, which satisfies the initial conditions 
$J_y^{(v)} (0)=o_{p_i}$ and $(DJ_y^{(v)} / dt) (0) 
= (d [\exp_{p_i}^{-1}c_{v} (s)-y] / ds) (0)$. 
For simplicity of notation we set  
\begin{equation}\label{2018_03_01_newpoint}
q_i (y):= \varphi_y^{(v)} (1,0)
= \exp_{p_i}  (\exp_{p_i}^{-1}q-y).
\end{equation}

\begin{remark}\label{2020_01_13_rem4.8}
The differential 
$(d\wt{F}_\ve^{(p_i)})_q (v)$ 
of $\wt{F}^{(p_i)}_\ve$ at $q$ is given 
for any 
$q \in B_{r(p_i)}(p_i)$ and $v \in \Sph_q^{m-1}$ by 
$
(d\wt{F}_\ve^{(p_i)})_q (v)
= \int_{y\,\in\,\B_\ve (o_{p_i})}
\rho_\ve^{(i)} (y)
d\wt{F}_{q_i (y)} 
(J_y^{(v)} (1))dy$. 
Indeed, fix $q \in B_{r(p_i)}(p_i)$ and $v \in \Sph_q^{m-1}$. Since $\wt{F}:M\to N\subset \R^\ell$, 
it follows from the definition of the differential of smooth maps (cf.\,\cite{Tu2011}),  
Eqs.\,\eqref{2018_01_25_rem4.2_2_a}, and \eqref{2018_03_01_newpoint} that  
\begin{align*}
(d\wt{F}_\ve^{(p_i)})_q (v)
&=\frac{d}{ds}\bigg|_{0} 
(\wt{F}_\ve^{(p_i)} \circ c_{v})(s)
= \frac{d}{ds}\bigg|_{0}
\int_{\B_\ve (o_{p_i})}
\rho_\ve^{(i)} (y)
\wt{F}(\varphi_y^{(v)} (1,s))dy\\
&= \int_{\B_\ve (o_{p_i})}\rho_\ve^{(i)}(y)
d\wt{F}_{q_i (y)}
\left(
\frac{\partial \varphi}{\partial s}(1,0)
\right)
dy 
= \int_{\B_\ve (o_{p_i})}\rho_\ve^{(i)}(y)
d\wt{F}_{q_i (y)} (J_y^{(v)} (1))dy.
\end{align*}
\end{remark}

\begin{lemma}\label{2018_02_20_lem4.7}
There is a constant $\omega (M) \in (0, \inj (M)/2)$ such that if $\ve \in (0, \omega (M))$, then parallel transport 
$\tau_{q_i (y)}^q: T_q M\to T_{q_i (y)} M$ 
is defined as in Definition \ref{2018_02_12_def2.1} 
for all $y \in \B_\ve (o_{p_i})$ and 
$q \in B_{r(p_i)}(p_i)$ ($i \in \{1, 2, \ldots, k\}$).
\end{lemma}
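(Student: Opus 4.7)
The plan is to show that $q_i (y)$ always lies inside a strongly convex ball centered at $q$ as soon as $\ve$ is small, which will give a unique minimal geodesic segment from $q$ to $q_i (y)$ and hence make $\tau_{q_i (y)}^q$ meaningful in the sense of Definition \ref{2018_02_12_def2.1}. The argument splits into two ingredients: (a) a uniform convexity radius on $M$; (b) a uniform estimate of $d_M (q, q_i (y))$ in terms of $\ve$.

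First I would extract a constant $c_0 >0$ such that $B_{c_0}(x)$ is strongly convex for every $x \in M$. This follows from Whitehead's convexity theorem (Theorem \ref{Whitehead}) applied pointwise together with the compactness of $M$: cover $M$ by finitely many strongly convex balls from Theorem \ref{Whitehead}, take a Lebesgue number of this cover, and shrink if necessary. Inside any such ball $B_{c_0}(q)$ any two points are joined by a unique minimal geodesic segment, so for each $q' \in B_{c_0}(q)$ the parallel translation $\tau_{q'}^q$ is defined in the sense of Definition \ref{2018_02_12_def2.1}.

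Next I would estimate $d_M (q, q_i (y))$. Since $q \in B_{r(p_i)}(p_i) \subset B_{\inj (M)}(p_i)$ we have $\|\exp_{p_i}^{-1} q\| = d_M (p_i, q) < r(p_i)$, and $\|y\| < \ve$, so the triangle inequality yields $\exp_{p_i}^{-1}q,\ \exp_{p_i}^{-1}q - y \in \B_{r(p_i) + \ve}(o_{p_i})$. Applying the definition \eqref{2018_02_20_lem4.7_1} of $\Lambda (\ve)$ then gives
\[
d_M (q, q_i (y))
=
d_M \bigl( \exp_{p_i} (\exp_{p_i}^{-1}q),\ \exp_{p_i} (\exp_{p_i}^{-1}q - y) \bigr)
\le
\Lambda (\ve) \cdot \|y\|
\le
\Lambda (\ve) \cdot \ve.
\]

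Finally, by Remark \ref{2018_02_27_rem4.5} the quantity $\Lambda (\ve)$ converges to a positive constant as $\ve \downarrow 0$, hence stays bounded on some interval $(0, \ve_0)$. Choosing $\omega (M) \in (0, \inj (M)/2)$ small enough that $\Lambda (\ve) \cdot \ve < c_0$ for every $\ve \in (0, \omega (M))$, we conclude $q_i (y) \in B_{c_0}(q)$ for all such $\ve$, all $y \in \B_\ve (o_{p_i})$, all $q \in B_{r(p_i)}(p_i)$, and all $i \in \{1, \ldots, k\}$, which delivers the existence and uniqueness of the minimal geodesic segment from $q$ to $q_i (y)$ and hence of $\tau_{q_i (y)}^q$. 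The only delicate step is the extraction of a uniform convexity radius $c_0$ from the pointwise Whitehead theorem, but this is standard on a compact Riemannian manifold; everything else is just the Lipschitz bound on $\exp_{p_i}$ combined with $\Lambda (\ve) \cdot \ve \to 0$.
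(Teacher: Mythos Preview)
Your argument is correct and follows the same core estimate as the paper: both show $d_M(q, q_i(y)) \le \Lambda(\ve)\cdot \ve$ via the Lipschitz bound on $\exp_{p_i}$ restricted to $\B_{r(p_i)+\ve}(o_{p_i})$, and then invoke Remark~\ref{2018_02_27_rem4.5} to make this small. The only difference is in the threshold you force $\Lambda(\ve)\cdot\ve$ below. You introduce a uniform convexity radius $c_0$ via a separate compactness/Lebesgue-number argument, whereas the paper simply uses $\inj(M)/2$, which is already known to be positive from the discussion at the start of Section~\ref{2018_01_18_Sect4}. Since $d_M(q,q_i(y)) < \inj(M)$ already guarantees a unique minimal geodesic from $q$ to $q_i(y)$, the extraction of $c_0$ is unnecessary; the paper's route is therefore slightly more economical, but your version is equally valid and the extra step you flagged as ``delicate'' can in fact be bypassed entirely.
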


\begin{proof}We see, by  
Remark \ref{2018_02_27_rem4.5}, that 
for $\inj(M)/ 2$ there is a constant 
$\omega (M) \in (0, \inj (M)/2)$ such that 
$\ve \cdot \Lambda (\ve) < \inj(M) / 2$ 
for all $\ve \in (0, \omega (M))$. 
Fix $\ve \in (0, \omega (M))$, and let 
$y \in \B_\ve (o_{p_i})$ and $q \in B_{r(p_i)}(p_i)$. 
Since 
$\exp_{p_i}^{-1}q -y \in \B_{r(p_i)+\ve}(o_{p_i})$ 
by Eq.\,\eqref{2018_01_25_rem4.2_2_b}, 
and since 
$\exp_{p_i}^{-1}q \in \B_{r(p_i)+\ve}(o_{p_i})$, 
Eq.\,\eqref{2018_03_01_newpoint} gives 
$
d_M(q, q_i(y))
\le \Lambda (\ve)\cdot \ve 
< \inj(M)/2$, 
and hence there is a unique minimal geodesic segment 
emanating from $q$ to $q_i(y)$. The map  
$\tau_{q_i (y)}^q: T_q M\to T_{q_i (y)} M$ 
is therefore defined as claimed.$\qedd$
\end{proof}

\subsection{The global smooth approximation of 
Lipschitz maps}\label{subsect4.2_2021_03_25}

In this subsection 
we define the global smooth approximation 
$\wt{F}_\ve$ of $\wt{F}$ using  
local smooth approximations and a partition of 
unity argument.\par  
Since $M$ is compact, there is a smooth partition of 
unity $\{\psi_i\}_{i=1}^k$ subordinate to 
$\{B_{r(p_i)}(p_i)\}_{i=1}^k$ (cf.\,\cite{Tu2011}).

\begin{definition}\label{2018_02_25_def4.9}
Fix $\ve \in (0, \inj(M)/2)$. We define the smooth map 
$\wt{F}_\ve:M\to \R^\ell$ by 
\begin{equation}\label{2018_03_04_def4.9_1}
\wt{F}_\ve(q):=\sum_{i=1}^k \psi_i(q)\wt{F}^{(p_i)}_\ve(q) 
\qquad (q \in M) 
\end{equation}
where each $\wt{F}^{(p_i)}_\ve$ 
is the local smooth approximation of $\wt{F}$ 
on $B_{r(p_i)}(p_i)$. 
\end{definition}

The following lemma says that 
$\wt{F}_\ve$ is the global smooth 
approximation of $\wt{F}$. 

\begin{lemma}{\rm (\cite[Lemma 2.17]{KT})}\label{2018_02_25_lem4.10}
For each $\ve \in (0, \inj(M)/2)$, 
\[\|\wt{F}_\ve(q)-\wt{F}(q)\| 
\le \ve \cdot \Lambda (\ve) \cdot \L(F)
\] 
holds for all $q\in M$ 
where $\|\cdot\|$ denotes the Euclidean norm of $\R^\ell$ and 
$\Lambda (\ve)$ is the constant given by 
Eq.\,\eqref{2018_02_20_lem4.7_1}.
\end{lemma}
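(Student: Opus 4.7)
The plan is to exploit the partition-of-unity identity $\sum_{i=1}^{k}\psi_i \equiv 1$ to rewrite $\wt{F}(q)$ as $\sum_{i=1}^k \psi_i(q)\wt{F}(q)$, so that the difference $\wt{F}_\ve(q)-\wt{F}(q)$ becomes a convex combination of the local approximation errors $\wt{F}^{(p_i)}_\ve(q)-\wt{F}(q)$, which are already controlled by Lemma \ref{2018_01_25_lem4.3}.

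More precisely, first I would fix $q\in M$ and consider the index set $I(q):=\{i\in\{1,2,\ldots,k\}\,|\,\psi_i(q)\ne 0\}$. By Definition \ref{2018_02_25_def4.9} together with the standard convention $\psi_i(q)\wt{F}^{(p_i)}_\ve(q)=0$ whenever $\psi_i(q)=0$, we have
\[
\wt{F}_\ve(q)=\sum_{i\in I(q)}\psi_i(q)\wt{F}^{(p_i)}_\ve(q).
\]
Since $\supp \psi_i\subset B_{r(p_i)}(p_i)$, each $i\in I(q)$ satisfies $q\in B_{r(p_i)}(p_i)$, so $\wt{F}^{(p_i)}_\ve(q)$ is well-defined by Remark \ref{2018_01_25_rem4.2} (ii), and Lemma \ref{2018_01_25_lem4.3} applies to yield $\|\wt{F}^{(p_i)}_\ve(q)-\wt{F}(q)\|\le\ve\cdot\Lambda(\ve)\cdot\L(F)$.

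Next, using $\sum_{i\in I(q)}\psi_i(q)=\sum_{i=1}^k\psi_i(q)=1$, I would write
\[
\wt{F}_\ve(q)-\wt{F}(q)
=\sum_{i\in I(q)}\psi_i(q)\bigl(\wt{F}^{(p_i)}_\ve(q)-\wt{F}(q)\bigr),
\]
and by the triangle inequality in $\R^\ell$ combined with nonnegativity of $\psi_i$,
\[
\|\wt{F}_\ve(q)-\wt{F}(q)\|
\le \sum_{i\in I(q)}\psi_i(q)\,\|\wt{F}^{(p_i)}_\ve(q)-\wt{F}(q)\|
\le \ve\cdot\Lambda(\ve)\cdot\L(F)\cdot\!\!\sum_{i\in I(q)}\psi_i(q)
=\ve\cdot\Lambda(\ve)\cdot\L(F),
\]
which is the desired bound.

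There is no genuine obstacle here; the only subtlety is the bookkeeping issue that $\wt{F}^{(p_i)}_\ve$ is a priori defined only on $B_{r(p_i)}(p_i)$, so one must verify that every term appearing with nonzero coefficient in \eqref{2018_03_04_def4.9_1} is legitimate. This is handled precisely by the condition $\supp\psi_i\subset B_{r(p_i)}(p_i)$ built into the choice of partition of unity, after which the estimate reduces to a one-line convex-combination argument.
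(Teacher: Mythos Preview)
Your argument is correct and is precisely the standard partition-of-unity reduction to the local estimate of Lemma~\ref{2018_01_25_lem4.3}; the paper itself does not reprove this lemma but simply cites \cite[Lemma~2.17]{KT}, whose proof is exactly the convex-combination computation you wrote down.
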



\section{Proof of Main Theorem (Theorem \ref{2018_01_04_maintheorem})}\label{2018_02_16_Sect5}

\subsection{Preliminaries}\label{2018_03_04_Sect5.1}

In this section it is shown that the smooth approximation of a Lipschitz map
defined in subsection \ref{subsect4.2_2021_03_25} is surjective near points that are nonsingular in the
sense of Clarke. 
Throughout this subsection 
let $M$ be a compact Riemannian manifold 
of dimension $m$, $N$ a Riemannian 
manifold of dimension $n$ with $m\ge n$, 
and $F: M\to N$ a Lipschitz map. 
Note here that we do not assume that $N$ 
is connected or compact.\par  
Via the Nash embedding theorem \cite{N}, 
we isometrically embed $N$ into Euclidean space $\R^\ell$ with the canonical Riemannian metric 
$\langle\,\cdot\,,\,\cdot\,\rangle$ where $\ell \ge \max 
\{m, n+1\}$. 
Let $\wt{F}:= F : M \to N \subset \R^\ell$, 
which is Lipschitz. Moreover we will use the same
notation as in Section \ref{2018_01_18_Sect4}, e.g., 
\begin{itemize}
\item $\inj(M)$ is the injectivity radius of $M$, 
\item $ \{B_{r(p_i)}(p_i)\}_{i=1}^{k}$, 
$ \{B_{t(p_i)}(\wt{F}(p_i))\}_{i=1}^{k}$ are families of a finite number $k$ of strongly convex balls 
$B_{r(p_i)}(p_i) \subset M$, $B_{t(p_i)}(\wt{F}(p_i)) \subset N$ satisfying (I)--(III) in Lemma \ref{2018_03_04_lem4.1},
\item $\{\psi_i\}_{i=1}^k$ is the smooth partition of 
unity subordinate to $\{B_{r(p_i)}(p_i)\}_{i=1}^k$, 
\item $\wt{F}^{(p_i)}_\ve: B_{r(p_i)}(p_i) \to 
\wt{F}^{(p_i)}_\ve(B_{r(p_i)}(p_i)) \subset \R^\ell$ is 
the local smooth approximation of $\wt{F}$, 
defined by Eq.\,\eqref{2018_01_20_def4.1_1}, 
\item $\wt{F}_\ve: M \to \wt{F}_\ve (M)\subset \R^\ell$ 
is the global smooth one of $\wt{F}$, done by Eq.\,\eqref{2018_03_04_def4.9_1}, etc.
\end{itemize}

In what follows let 
$p\in M$ be nonsingular for $\wt{F}$, and let 
$\lambda (p)$ be the positive constant as in Proposition \ref{2018_02_12_prop3.11}. Fix  
$q \in B_{\lambda (p)} (p)$. 
We can then choose $i \in \{1,2,\ldots, k\}$ satisfying 
$q \in \supp \psi_i$. 
Note that $\supp \psi_i \subset B_{r(p_i)}(p_i)$. 

\begin{lemma}\label{2018_03_04_lem5.1}
Set 
$
\ve^{(i)}(p):=\min 
\{ 
r (p_i), \, \omega (M), \, 
\lambda (p) / 
\L(
\exp_{p_i}|_{\B_{2r (p_i)}(o_{p_i})}
)
\}
$ 
where $\omega (M) \in (0, \inj (M)/2)$ denotes 
the constant as in Lemma 
\ref{2018_02_20_lem4.7}. 
Then for any $y \in \B_{\ve^{(i)}(p)} (o_{p_i})$ 
we have $q_i(y) \in B_{2\lambda (p)} (p)$ 
where each $q_i(y)$ is the point defined by 
Eq.\,\eqref{2018_03_01_newpoint}. 
In particular for any 
$y \in \B_{\ve^{(i)}(p)} (o_{p_i})$ 
parallel transport $\tau^{\wt{F}(p)}_{\wt{F}(q_i(y))}:T_{\wt{F}(p)}N\to T_{\wt{F}(q_i(y))}N$ along a unique minimal geodesic 
of $N$ emanating from $\wt{F}(p)$ 
to $\wt{F}(q_i(y))$ 
is defined in the sense 
of Definition \ref{2018_02_12_def2.1}.\par 
Note here that 
$\tau^{\wt{F}(p)}_{\wt{F}(q_i(y))}$ is not parallel translation 
along a line segment of $\R^\ell$ joining the two points. 
\end{lemma}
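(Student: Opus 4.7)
The plan is to prove the two assertions in order: first the containment $q_i(y)\in B_{2\lambda(p)}(p)$ using a direct Lipschitz estimate for $\exp_{p_i}$, then the existence of the parallel translation in $N$ by appealing to the strong convexity of $B_{t(p)}(\wt F(p))$ combined with property (ii) of Proposition \ref{2018_02_12_prop3.11}.

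For the first assertion I would proceed as follows. Since $q\in\supp\psi_i\subset B_{r(p_i)}(p_i)$, we have $\exp_{p_i}^{-1}q\in\B_{r(p_i)}(o_{p_i})$. For any $y\in\B_{\ve^{(i)}(p)}(o_{p_i})$, the choice $\ve^{(i)}(p)\le r(p_i)$ yields $\|y\|<r(p_i)$, so both $\exp_{p_i}^{-1}q$ and $\exp_{p_i}^{-1}q-y$ lie in $\B_{2r(p_i)}(o_{p_i})$. Moreover $2r(p_i)<\inj(M)$ by condition (II) of Lemma \ref{2018_03_04_lem4.1}, so $\exp_{p_i}$ is a diffeomorphism on this ball, and $q_i(y)=\exp_{p_i}(\exp_{p_i}^{-1}q-y)$ is well defined. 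The Lipschitz bound then gives
\[
d_M(q,q_i(y))
\le \L\bigl(\exp_{p_i}|_{\B_{2r(p_i)}(o_{p_i})}\bigr)\cdot\|y\|
<\L\bigl(\exp_{p_i}|_{\B_{2r(p_i)}(o_{p_i})}\bigr)\cdot\ve^{(i)}(p)\le \lambda(p),
\]
where the last inequality is forced by the very definition of $\ve^{(i)}(p)$. Combining this with $d_M(p,q)<\lambda(p)$ via the triangle inequality gives $d_M(p,q_i(y))<2\lambda(p)$, i.e., $q_i(y)\in B_{2\lambda(p)}(p)$.

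For the second assertion I would invoke property (ii) of Proposition \ref{2018_02_12_prop3.11}, which states that $\wt F$ maps $B_{2\lambda(p)}(p)$ into $B_{t(p)}(\wt F(p))$. Since $q_i(y)\in B_{2\lambda(p)}(p)$ by the previous step, $\wt F(q_i(y))\in B_{t(p)}(\wt F(p))$. By the construction in Lemma \ref{2018_02_01_lem2.2}, $B_{t(p)}(\wt F(p))$ is a strongly convex open ball in $N$ satisfying condition \eqref{2018_01_28_Sect2_a} of the Whitehead theorem, so $\wt F(p)$ and $\wt F(q_i(y))$ are joined by a unique minimal geodesic segment of $N$ contained in $B_{t(p)}(\wt F(p))$. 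This is exactly the setting in which Definition \ref{2018_02_12_def2.1} allows us to introduce the parallel translation $\tau^{\wt F(p)}_{\wt F(q_i(y))}:T_{\wt F(p)}N\lra T_{\wt F(q_i(y))}N$ along this geodesic; and, as emphasised in the statement, this is an intrinsic parallel translation in $N$ rather than a translation along the Euclidean chord in $\R^\ell$.

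The argument is essentially bookkeeping, and the only real subtlety is to make sure the vector $\exp_{p_i}^{-1}q-y$ remains inside the ball $\B_{2r(p_i)}(o_{p_i})$ on which the Lipschitz constant of $\exp_{p_i}$ has been measured; this is handled by the choice $\ve^{(i)}(p)\le r(p_i)$. Once this is in place, everything else is automatic: the Lipschitz inequality carries us into $B_{2\lambda(p)}(p)$, and Proposition \ref{2018_02_12_prop3.11}(ii) together with strong convexity of $B_{t(p)}(\wt F(p))$ delivers the required parallel translation in $N$.
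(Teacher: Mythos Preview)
Your proof is correct and follows essentially the same route as the paper's own argument: you bound $\|\exp_{p_i}^{-1}q-y\|<2r(p_i)$ via $\ve^{(i)}(p)\le r(p_i)$, use the Lipschitz constant of $\exp_{p_i}$ on $\B_{2r(p_i)}(o_{p_i})$ together with the triangle inequality to obtain $q_i(y)\in B_{2\lambda(p)}(p)$, and then invoke Proposition~\ref{2018_02_12_prop3.11}(ii) and the strong convexity of $B_{t(p)}(\wt F(p))$ to define the parallel translation in $N$. The paper does exactly this, with only cosmetic differences in presentation.
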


\begin{proof}
Fix $y \in \B_{\ve^{(i)}(p)} (o_{p_i})$. 
Since $\ve^{(i)}(p) \le r(p_i)$, 
the triangle inequality gives 
\[
\| \exp_{p_i}^{-1}q -y\| \le \|\exp_{p_i}^{-1}q\| +\|y\| 
< r(p_i) + \ve^{(i)}(p) \le 2 r(p_i), 
\]
and hence $\exp_{p_i}^{-1}q -y \in \B_{2 r(p_i)} (o_{p_i})$. 
Since $q \in B_{r(p_i)}(p_i)$, it is clear that 
$\exp_{p_i}^{-1}q \in \B_{2r(p_i)}(o_{p_i})$. 
Note that $\exp_{p_i}|_{\B_{2r (p_i)}(o_{p_i})}$ is a 
diffeomorphism, 
as $2r(p_i) < \inj(M)$, see Lemma \ref{2018_03_04_lem4.1} (II). 
We then see, by the triangle inequality, that  
$
d_M (p, q_i(y)) 
\le d_M (p,q)+ d_M(q,q_i(y)) < 
\lambda (p) 
+ 
\ve^{(i)}(p) \cdot \L(
\exp_{p_i}|_{\B_{2r (p_i)}(o_{p_i})})
\le 2\lambda (p)
$. 
Hence 
we get $q_i(y) \in B_{2\lambda (p)} (p)$ as claimed. 
Moreover, since $p, q_i(y) \in B_{2\lambda (p)} (p)$, it follows from Proposition \ref{2018_02_12_prop3.11} (ii) that 
$\wt{F}(p), \wt{F}(q_i(y)) \in B_{t(p)}(\wt{F}(p)) \subset N$. 
Along the minimal geodesic of $N$ emanating from 
$\wt{F}(p)$ to $\wt{F}(q_i(y))$ parallel transport 
$\tau^{\wt{F}(p)}_{\wt{F}(q_i(y))}:T_{\wt{F}(p)}N\to T_{\wt{F}(q_i(y))}N$ is 
defined as in Definition \ref{2018_02_12_def2.1}.
$\qedd$
\end{proof}

\begin{remark}
Since $\ve^{(i)}(p) \le \omega (M)$, 
and since $q \in B_{r(p_i)}(p_i)$, 
from Lemma \ref{2018_02_20_lem4.7} 
we have parallel transport 
$\tau^{q_i(y)}_q:T_{q_i(y)}M\to T_qM$, 
as in Definition \ref{2018_02_12_def2.1}, 
for all $y \in \B_{\ve^{(i)}(p)}(o_{p_i})$. 
We use this in the next lemma.  
\end{remark}

From now on $\delta (p) >0$ indicates the constant as in Proposition \ref{2018_02_12_prop3.11}, and for each $x \in M$ let 
$
\Sph_x^{m-1}
:=\{u \in T_x M \,|\, \|u\|=1\}
$ and 
$
\Sph_{\wt{F}(x)}^{n-1}
:=\{v \in T_{\wt{F}(x)} N \,|\, \|v\|=1\}
$.

\begin{lemma}
{\rm (Key Lemma)}\label{2018_03_04_lem5.3} 
Fix $\ve \in (0, \ve^{(i)}(p))$. 
For any $y \in \B_\ve (o_{p_i})$ 
and any 
$
\tilde{u} \in \Sph_{\wt{F}(q)}^{n-1}$ 
there is a vector 
$V_{q_i(y)}^{(\tilde{u})} \in \Sph_{q_i(y)}^{m-1}$ 
such that 
\begin{align*}
&\big\langle 
(d\wt{F}_\ve^{(p_i)})_q(\tau_q^{q_i(y)} (V_{q_i(y)}^{(\tilde{u})})),\tilde{u} 
\big\rangle\\
&\ge 
\delta(p)-\L(F)
\big(
\sup_{y\in \B_\ve( o_{p_i}) } 
\|J_y^{(\tau_q^{q_i(y)} (V_{q_i(y)}^{(\tilde{u})}))}(1)
-V_{q_i(y)}^{(\tilde{u})}\|
+
\sup_{y\in \B_\ve( o_{p_i})}\|\tilde{u}-
(\tau_{\wt{F}(q_i(y))}^{\wt{F}(p)} \circ \tau_{\wt{F}(p)}^{\wt{F}(q)})(\tilde{u})\|
\big).
\end{align*}
Here $J_y^{(\tau_q^{q_i(y)} (V_{q_i(y)}^{(\tilde{u})}))}$ 
is the Jacobi field, defined by 
Eq.\,\eqref{2018_02_21_Jacobi} for 
$v
= \tau_q^{q_i(y)} (V_{q_i(y)}^{(\tilde{u})}) \in \Sph_q^{m-1}$, 
along the geodesic 
$\varphi_y^{(\tau_q^{q_i(y)} (V_{q_i(y)}^{(\tilde{u})}))} (t,0)$ given by Eq.\,\eqref{2018_02_27_geodesic} joining $p_i$ 
to $q_i (y)$. 
\end{lemma}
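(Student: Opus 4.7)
The plan is to combine the pointwise surjectivity estimate of Proposition \ref{2018_02_12_prop3.11} with the integral representation of $(d\wt F_\ve^{(p_i)})_q$ in Remark \ref{2020_01_13_rem4.8}. The two supremum terms on the right-hand side will emerge as the errors of (i) approximating the Jacobi field $J_y^{(v)}(1)$ at $t=1$ by the parallel transport of its initial datum on the $M$-side, and (ii) approximating $\tilde u \in T_{\wt F(q)}N$ by its iterated $N$-parallel transport through $\wt F(p)$ back to $T_{\wt F(q_i(y))}N$ on the $N$-side.

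First I would fix $\tilde u\in\Sph^{n-1}_{\wt F(q)}$ and set $u := \tau^{\wt F(q)}_{\wt F(p)}(\tilde u) \in \Sph^{n-1}_{\wt F(p)}$. By Lemma \ref{2018_03_04_lem5.1} the point $q_i(y)$ lies in $B_{2\lambda(p)}(p)$ for every $y \in \B_{\ve^{(i)}(p)}(o_{p_i})$, so Proposition \ref{2018_02_12_prop3.11} furnishes a unit vector $V_{q_i(y)}^{(\tilde u)} := V_{q_i(y)}^{(u)} \in \Sph^{m-1}_{q_i(y)}$; for almost every such $y$, at which $d\wt F_{q_i(y)}$ exists and hence $(d\wt F_{q_i(y)})^* \in \{\partial \wt F(q_i(y))\}^*$, the adjoint identity of Theorem \ref{2018_01_16_thm3.1} converts the inequality in Proposition \ref{2018_02_12_prop3.11} (iii) into
\[
\bigl\langle d\wt F_{q_i(y)}(V_{q_i(y)}^{(\tilde u)}),\;\tau^{\wt F(p)}_{\wt F(q_i(y))}(u)\bigr\rangle_N \;\ge\; \delta(p).
\]

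Next I would set $v_y := \tau_q^{q_i(y)}(V_{q_i(y)}^{(\tilde u)}) \in T_qM$ and unfold the left-hand side of the claim as the weighted integral of $\langle d\wt F_{q_i(y)}(J_y^{(v_y)}(1)),\tilde u\rangle$ against $\rho_\ve^{(i)}$ over $y \in \B_\ve(o_{p_i})$ (this is the natural $y$-dependent-input version of Remark \ref{2020_01_13_rem4.8}, consistent with the $\sup_y$ appearing on the right). Using the purely algebraic split
\begin{align*}
\bigl\langle d\wt F_{q_i(y)}(J_y^{(v_y)}(1)),\tilde u\bigr\rangle
&= \bigl\langle d\wt F_{q_i(y)}(V_{q_i(y)}^{(\tilde u)}),\tau^{\wt F(p)}_{\wt F(q_i(y))}(u)\bigr\rangle_N \\
&\quad + \bigl\langle d\wt F_{q_i(y)}(J_y^{(v_y)}(1)-V_{q_i(y)}^{(\tilde u)}),\tilde u\bigr\rangle \\
&\quad + \bigl\langle d\wt F_{q_i(y)}(V_{q_i(y)}^{(\tilde u)}),\tilde u - \tau^{\wt F(p)}_{\wt F(q_i(y))}(u)\bigr\rangle,
\end{align*}
(which is a tautology in $\R^\ell$, upon noting that the isometric Nash embedding identifies the $N$-pairing with the Euclidean one), I would bound the first term below by $\delta(p)$ via the preceding display, and bound the two remainders in absolute value by Cauchy--Schwarz, using $\|V_{q_i(y)}^{(\tilde u)}\|=\|\tilde u\|=1$ together with the operator-norm bound $\|d\wt F_{q_i(y)}\|\le \L(F)$ inherited from $\wt F$ being $\L(F)$-Lipschitz into $\R^\ell$. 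The two remainders produce exactly the factors $\L(F)\|J_y^{(v_y)}(1)-V_{q_i(y)}^{(\tilde u)}\|$ and $\L(F)\|\tilde u-(\tau^{\wt F(p)}_{\wt F(q_i(y))}\circ\tau^{\wt F(q)}_{\wt F(p)})(\tilde u)\|$ demanded by the claimed bound. Replacing each norm by its supremum over $\B_\ve(o_{p_i})$ and integrating against $\rho_\ve^{(i)}$, whose nonnegativity and unit total mass are recorded in Eq.\,\eqref{2018_04_18_def4.1_2}, yields the desired inequality.

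The main obstacle is the careful bookkeeping between three inner-product spaces — the intrinsic inner product on $T_{\wt F(q_i(y))} N$ in which Proposition \ref{2018_02_12_prop3.11} is phrased, the intrinsic one on $T_{q_i(y)} M$ in which the Jacobi fields live, and the ambient Euclidean pairing on $\R^\ell$ used to couple $\tilde u$ with $(d\wt F_\ve^{(p_i)})_q$ — together with the correct interpretation of the left-hand side as the natural variable-input integral version of Remark \ref{2020_01_13_rem4.8}. The three-term decomposition above is engineered precisely so that the main term is handled by Proposition \ref{2018_02_12_prop3.11}, while the two remainders isolate exactly the $M$-side Jacobi-field-versus-parallel-transport mismatch and the $N$-side parallel-transport-versus-Euclidean-identification mismatch that the stated bound quantifies; both shrink to zero as $\ve\downarrow 0$, which the succeeding lemmas will exploit.
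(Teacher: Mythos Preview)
Your approach is essentially the paper's: obtain $V_{q_i(y)}^{(\tilde u)}$ from Proposition~\ref{2018_02_12_prop3.11}(iii), convert via the adjoint identity of Theorem~\ref{2018_01_16_thm3.1}, invoke the integral representation of $(d\wt F_\ve^{(p_i)})_q$, split into a main term bounded below by $\delta(p)$ plus two Cauchy--Schwarz-controlled remainders, and integrate against the unit-mass mollifier. Your three-term decomposition is exactly the paper's two-step split (Eqs.~\eqref{2018_03_04_lem5.3_2} and~\eqref{2018_03_04_lem5.3_3}) written in one line.

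The one point that needs correction is your ``$y$-dependent-input version'' of Remark~\ref{2020_01_13_rem4.8}. No such version is needed, and none is valid as you have stated it: the left-hand side of the claim is $(d\wt F_\ve^{(p_i)})_q$ evaluated at a \emph{single} vector $v := \tau_q^{q_i(y)}(V_{q_i(y)}^{(\tilde u)}) \in \Sph_q^{m-1}$, where this $y$ is the one fixed by ``For any $y$'' in the statement. Remark~\ref{2020_01_13_rem4.8} then applies verbatim with this fixed $v$, giving
\[
(d\wt F_\ve^{(p_i)})_q(v) = \int_{\B_\ve(o_{p_i})} \rho_\ve^{(i)}(y')\, d\wt F_{q_i(y')}\bigl(J_{y'}^{(v)}(1)\bigr)\,dy',
\]
with $y'$ the dummy integration variable. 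The paper's statement and proof conflate the outer $y$ with this dummy, which is precisely the ambiguity you tried to resolve by letting the input vary; the correct resolution is instead to keep $v$ fixed and let only the subtracted vector $V_{q_i(y')}^{(\tilde u)}$ depend on the integration variable when you split $J_{y'}^{(v)}(1) = V_{q_i(y')}^{(\tilde u)} + \bigl(J_{y'}^{(v)}(1) - V_{q_i(y')}^{(\tilde u)}\bigr)$. With that adjustment your argument goes through verbatim and coincides with the paper's.
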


\begin{proof}
By Lemma \ref{2018_03_04_lem5.1}, 
$q_i(y) \in B_{2 \lambda (p)}(p)$ holds for 
all $y \in \B_\ve (o_{p_i})$. Fix 
$\tilde{u} \in \Sph_{\wt{F}(q)}^{n-1}$. 
It follows from 
Proposition \ref{2018_02_12_prop3.11} (iii) 
for 
$
u
= \tau_{\wt{F}(p)}^{\wt{F}(q)}(\tilde{u}) 
\in \Sph_{\wt{F}(p)}^{n-1}
$ and 
$x= q_i(y)$ that  {\em for almost all} 
$y \in \B_\ve (o_{p_i})$ there is a vector 
$
V_{q_i(y)}^{(\tilde{u})}
:= 
V_{q_i(y)}^{(\tau_{\wt{F}(p)}^{\wt{F}(q)}(\tilde{u}))} 
\in \Sph_{q_i(y)}^{m-1}
$ 
such that 
\begin{equation}\label{2018_04_18_lem5.3_new2} 
\big\langle 
V_{q_i(y)}^{(\tilde{u})}, (d\wt{F}_{q_i(y)})^* 
(\tau^{\wt{F}(p)}_{F({q_i(y)})} 
(\tau_{\wt{F}(p)}^{\wt{F}(q)}(\tilde{u}))) 
\big\rangle_M 
\ge \delta (p)
\end{equation}
where $(d\wt{F}_{q_i(y)})^*$ is 
the adjoint of the differential 
$d\wt{F}_{q_i(y)} : 
T_{q_i(y)} M\to T_{\wt{F}(q_i(y))} N$, 
and $\langle \,\cdot\,, \,\cdot\,\rangle_M$ 
denotes Riemannian metric of $M$. 
Since $N$ is isometrically embedded into $\R^\ell$, 
we see, by the Riesz representation theorem (cf.\,\cite[Theorem 10.1]{Rom}) and 
Eq.\,\eqref{2018_04_18_lem5.3_new2}, that 
\begin{align}\label{2018_03_04_lem5.3_1}
\langle 
d\wt{F}_{q_i(y)} (V_{q_i(y)}^{(\tilde{u})}), 
(
\tau^{\wt{F}(p)}_{\wt{F}({q_i(y)})} 
\circ \tau_{\wt{F}(p)}^{\wt{F}(q)}
)
(\tilde{u})
\rangle
&=
\langle 
V_{q_i(y)}^{(\tilde{u})}, (d\wt{F}_{q_i(y)})^* 
(\tau^{\wt{F}(p)}_{F({q_i(y)})} 
(\tau_{\wt{F}(p)}^{\wt{F}(q)}(\tilde{u}))) 
\rangle_M\\
&\ge \delta (p)\notag
\end{align}
for almost all $y \in \B_\ve (o_{p_i})$. 
For simplicity of notation we set 
$v:= \tau_q^{q_i(y)} (V_{q_i(y)}^{(\tilde{u})}) \in \Sph_q^{m-1}$. 
We then see, 
by Eq.\,\eqref{2018_04_18_def4.1_2}, 
Remark \ref{2020_01_13_rem4.8}, 
and the Cauchy--Schwarz inequality, that 
\begin{align}\label{2018_03_04_lem5.3_2} 
&\langle 
(d\wt{F}_\ve^{(p_i)})_q (v), \tilde{u}
\rangle\\
&\ge - \L(F)\sup_{y \in \B_\ve (o_{p_i})}
\|
J_y^{(v)}(1)- V_{q_i(y)}^{(\tilde{u})}
\|
+\int_{\B_\ve (o_{p_i})} \rho_\ve^{(i)} (y) 
\langle 
d\wt{F}_{q_i(y)} (V_{q_i(y)}^{(\tilde{u})}), \tilde{u}
\rangle dy.\notag
\end{align}
Moreover, we see, 
by Eqs.\,\eqref{2018_04_18_def4.1_2},
\eqref{2018_03_04_lem5.3_1}, 
and the Cauchy--Schwarz inequality, 
that 
\begin{align}\label{2018_03_04_lem5.3_3} 
&\int_{\B_\ve (o_{p_i})} \rho_\ve^{(i)} (y) 
\big\langle 
d\wt{F}_{q_i(y)} (V_{q_i(y)}^{(\tilde{u})}), \tilde{u}
\big\rangle dy\\
&\ge
- \int_{\B_\ve (o_{p_i})} \rho_\ve^{(i)} (y) 
\|
d\wt{F}_{q_i(y)} 
(
V_{q_i(y)}^{(\tilde{u})}
)\| 
\cdot 
\| 
\tilde{u} 
- (
\tau^{\wt{F}(p)}_{\wt{F}({q_i(y)})} 
\circ \tau_{\wt{F}(p)}^{\wt{F}(q)}
)
(\tilde{u})
\| dy 
+ \delta (p)\notag\\
&\ge 
- \L(F)
\sup_{y \in \B_\ve (o_{p_i})}
\| 
\tilde{u} 
- (
\tau^{\wt{F}(p)}_{\wt{F}({q_i(y)})} 
\circ \tau_{\wt{F}(p)}^{\wt{F}(q)}
)
(\tilde{u})
\|
+ \delta (p).\notag
\end{align}
Substituting Eq.\,\eqref{2018_03_04_lem5.3_3} into Eq.\,\eqref{2018_03_04_lem5.3_2}, 
we obtain the desired inequality.
$\qedd$
\end{proof}

We can apply an argument almost identical to the proof of 
\cite[Lemmas 2.23, 2.24]{KT} 
and Lemma \ref{2018_03_04_lem5.3} 
to show the following. We omit the proof.

\begin{proposition}\label{2018_03_06_lem5.6}
There is a constant $\ve_0 (p)>0$ satisfying 
the following: For any 
$\tilde{u} \in \Sph_{\wt{F}(q)}^{n-1}$ there 
is a vector $w^{(\tilde{u})} \in\Sph_q^{m-1}$ 
such that for any $\ve\in(0,\ve_0(p))$, 
\begin{equation}\label{2018_03_08_prop5.6_1}
\big\langle
(d\wt{F}_\ve)_q(w^{(\tilde{u})}),\tilde{u}
\big\rangle \ge \frac{1}{3}\delta(p). 
\end{equation}
\end{proposition}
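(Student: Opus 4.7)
The plan is to construct $w^{(\tilde u)}$ directly from Proposition \ref{2018_02_12_prop3.11} applied at $q$ itself, and then propagate the resulting lower bound through the partition-of-unity formula defining $\wt F_\ve$. Setting $u := \tau^{\wt F(q)}_{\wt F(p)}(\tilde u) \in \Sph^{n-1}_{\wt F(p)}$ and invoking Proposition \ref{2018_02_12_prop3.11} at $x = q \in B_{2\lambda(p)}(p)$, I take $w^{(\tilde u)} := V_q^{(u)} \in \Sph_q^{m-1}$. The relation $\tau^{\wt F(p)}_{\wt F(q)} \circ \tau^{\wt F(q)}_{\wt F(p)} = \id$, combined with Theorem \ref{2018_01_16_thm3.1} and Lemma \ref{2018_01_16_lem3.3}(iv), converts the conclusion of Proposition \ref{2018_02_12_prop3.11} into the $\ve$-independent bound
\[
\langle g(w^{(\tilde u)}), \tilde u\rangle \ge \delta(p) \qquad \text{for every } g \in \partial F(q) \supset K_F(q).
\]

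Next, I would differentiate $\wt F_\ve = \sum_{i=1}^{k} \psi_i\,\wt F^{(p_i)}_\ve$ at $q$. Because $\sum_i d\psi_i|_q \equiv 0$, the partition-derivative piece collapses to $\sum_i d\psi_i|_q(w^{(\tilde u)})\bigl(\wt F^{(p_i)}_\ve(q) - \wt F(q)\bigr)$, which Lemma \ref{2018_01_25_lem4.3} bounds by a constant (depending only on the fixed family $\{\psi_i\}$) times $\ve\,\Lambda(\ve)\,\L(F)$, hence $o(1)$ as $\ve \downarrow 0$. It therefore suffices, using $\sum_i \psi_i(q) = 1$, to show that for every $i$ with $\psi_i(q) > 0$,
\[
\langle (d\wt F^{(p_i)}_\ve)_q(w^{(\tilde u)}), \tilde u\rangle \ge \delta(p) - o(1) \quad \text{as } \ve \downarrow 0.
\]

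For such an $i$, by Remark \ref{2020_01_13_rem4.8} I split the integrand of $(d\wt F^{(p_i)}_\ve)_q(w^{(\tilde u)})$ as $d\wt F_{q_i(y)}(\tau^{q}_{q_i(y)}(w^{(\tilde u)})) + d\wt F_{q_i(y)}(J_y^{(w^{(\tilde u)})}(1) - \tau^{q}_{q_i(y)}(w^{(\tilde u)}))$. Paired with $\tilde u$, the second summand is bounded in absolute value by $\L(F)\sup_{y\in \B_\ve(o_{p_i})}\|J_y^{(w^{(\tilde u)})}(1) - \tau^{q}_{q_i(y)}(w^{(\tilde u)})\|$, which vanishes with $\ve$ by standard Jacobi-field estimates. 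In the first summand, I would replace $\tilde u$ by its intrinsic $N$-parallel translate $\tau^{\wt F(q)}_{\wt F(q_i(y))}(\tilde u) \in T_{\wt F(q_i(y))}N$ inside the ambient inner product, at Euclidean cost $O(d_N(\wt F(q), \wt F(q_i(y)))) = O(\ve)$ via the isometric embedding and the Lipschitz property of $F$. What remains is an intrinsic pairing in $T_{\wt F(q_i(y))}N$ that rearranges to $\langle (\tau^{F(q_i(y))}_{F(q)} \circ d\wt F_{q_i(y)} \circ \tau^{q}_{q_i(y)})(w^{(\tilde u)}), \tilde u\rangle$, and Remark \ref{2018_01_12_rem2.4}(vi) places this operator in an arbitrarily small neighbourhood of $\partial F(q)$ in $\cL(T_qM, T_{\wt F(q)}N)$; the displayed bound above then yields the required $\delta(p) - o(1)$.

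Choosing $\ve_0(p)$ so small that the cumulative $o(1)$ error is at most $(2/3)\delta(p)$ on $(0, \ve_0(p))$ delivers \eqref{2018_03_08_prop5.6_1}. The main obstacle is precisely this bookkeeping: the inner product in \eqref{2018_03_08_prop5.6_1} is the ambient Euclidean one on $\R^\ell$ inherited from the Nash embedding, whereas $\partial F(q)$, $K_F(q)$, and $\tilde u$ are intrinsic to $N$ and attached to tangent spaces at varying basepoints. Tracking the discrepancy between intrinsic $N$-parallel translation and pure Euclidean translation in $\R^\ell$ is where most of the $o(1)$ terms arise, and it is where the argument most closely follows \cite[Lemmas 2.23 and 2.24]{KT}.
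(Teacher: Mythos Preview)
Your proposal is correct and follows essentially the same route as the paper, which derives Proposition~\ref{2018_03_06_lem5.6} from Key Lemma~\ref{2018_03_04_lem5.3} together with the computations in \cite[Lemmas~2.23, 2.24]{KT}. The difference is organizational rather than substantive: you apply Proposition~\ref{2018_02_12_prop3.11} once at $x=q$ to fix $w^{(\tilde u)}=V_q^{(u)}$ and then use the upper semicontinuity of $\partial F$ (Remark~\ref{2018_01_12_rem2.4}\,(vi)) to control the integrand at the nearby points $q_i(y)$, whereas the paper's Key Lemma applies Proposition~\ref{2018_02_12_prop3.11} at each $q_i(y)$ separately. Both variants end up with exactly the same three error terms---the Jacobi-field defect $\|J_y^{(v)}(1)-\tau^q_{q_i(y)}(v)\|$, the discrepancy between intrinsic $N$-parallel transport and Euclidean translation in $\R^\ell$, and the partition-of-unity derivative handled via Lemma~\ref{2018_01_25_lem4.3}.

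One small point deserves explicit attention: $\ve_0(p)$ must be uniform over all $q\in B_{\lambda(p)}(p)$ (this is how the proposition is used in Corollary~\ref{2018_09_21_cor5.8} and in Section~\ref{2018_03_11_Sect5.2}). Your invocation of Remark~\ref{2018_01_12_rem2.4}\,(vi) at base point $q$ a~priori gives a radius $\mu(q,\ve')$ that could depend on $q$. This is easily repaired by going back to the proof of Proposition~\ref{2018_02_12_prop3.11}: there $b^{(u,x)}=\tau^p_x(c^{(u)})$ for a fixed nearest point $c^{(u)}\in T_pM$, so in fact $V_x^{(u)}=\tau^p_x(V^{(u)})$ for a single $V^{(u)}\in\Sph_p^{m-1}$ independent of $x\in B_{2\lambda(p)}(p)$. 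With this observation the estimate $\langle d\wt F_{q_i(y)}(\tau^q_{q_i(y)}(w^{(\tilde u)})),\,\tau^{\wt F(q)}_{\wt F(q_i(y))}(\tilde u)\rangle\ge\delta(p)-o(1)$ follows directly from Proposition~\ref{2018_02_12_prop3.11}\,(iii) at $x=q_i(y)$ plus a holonomy error of order $O(\lambda(p)\cdot\ve)$, and all $o(1)$ terms are manifestly uniform in $q$ and $\tilde u$.
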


\begin{corollary}\label{2018_09_21_cor5.8}
If $N=\R$, and $p \in M$ is nonsingular for the 
Lipschitz function $F:M\to \R$, then there 
are two constants $\lambda (p) >0$ and $\ve_0 (p) >0$ 
such that 
$\grad \wt{F}_\ve \not= 0$ 
on $B_{\lambda (p)}(p)$ for all $\ve \in (0, \ve_0 (p))$, 
hence $\wt{F}_\ve$ has no critical points 
on $B_{\lambda (p)}(p)$. 
\end{corollary}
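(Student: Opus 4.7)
The proof proposal is to extract the corollary directly from Proposition \ref{2018_03_06_lem5.6} by specializing to $N=\R$ and translating the differential into the gradient. First, since $p$ is nonsingular for $F$, Proposition \ref{2018_02_12_prop3.11} furnishes constants $\lambda(p)>0$ and $\delta(p)>0$; these are exactly the constants that are fed into Proposition \ref{2018_03_06_lem5.6}, which in turn produces a threshold $\ve_0(p)>0$. I would take these as the $\lambda(p)$ and $\ve_0(p)$ asserted in the corollary. Observe that in the case $N=\R$ there is no need for a Nash embedding (Remark \ref{2018_09_20_rem4.2}), so $\wt{F}=F$ and $\wt{F}_\ve:M\lra\R$ is a genuine smooth real-valued function, for which the classical gradient $\grad \wt{F}_\ve$ is well defined on all of $M$.

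Next, fix an arbitrary $q\in B_{\lambda(p)}(q)$ and choose the unit vector $\tilde{u}:=d/dt|_{F(q)}\in\Sph_{F(q)}^{0}\subset T_{F(q)}\R$. Proposition \ref{2018_03_06_lem5.6}, applied to this $\tilde{u}$, produces a vector $w^{(\tilde{u})}\in\Sph_q^{m-1}$ with
\[
\langle (d\wt{F}_\ve)_q(w^{(\tilde{u})}),\tilde{u}\rangle \ge \frac{1}{3}\delta(p)
\]
for all $\ve\in(0,\ve_0(p))$. Using Eq.\,\eqref{2020_01_09_grad} from Section \ref{2018_sec2.2}, which says $(d\wt{F}_\ve)_q(w^{(\tilde{u})})=\langle(\grad\wt{F}_\ve)_q,w^{(\tilde{u})}\rangle_M\cdot d/dt|_{F(q)}$, and the fact that $\langle d/dt|_{F(q)},d/dt|_{F(q)}\rangle=1$, the inequality becomes
\[
\langle (\grad \wt{F}_\ve)_q, w^{(\tilde{u})}\rangle_M \ge \frac{1}{3}\delta(p)>0.
\]

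Since $w^{(\tilde{u})}$ is a unit tangent vector, the Cauchy--Schwarz inequality immediately yields $\|(\grad\wt{F}_\ve)_q\|\ge \tfrac{1}{3}\delta(p)>0$, so $(\grad\wt{F}_\ve)_q\ne 0$. As $q\in B_{\lambda(p)}(p)$ and $\ve\in(0,\ve_0(p))$ were arbitrary, this gives $\grad\wt{F}_\ve\ne 0$ on the whole ball $B_{\lambda(p)}(p)$, and hence $\wt{F}_\ve$ has no critical points there, which is the conclusion of the corollary. There is essentially no obstacle here: the content has been absorbed into Proposition \ref{2018_03_06_lem5.6}, and the only subtlety worth spelling out is the one-line identification of the differential of a real-valued smooth function with the Riemannian inner product against its gradient, which legitimizes passing from $(d\wt{F}_\ve)_q$ to $(\grad\wt{F}_\ve)_q$.
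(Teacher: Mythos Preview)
Your argument is correct and follows essentially the same route as the paper's proof: invoke Proposition~\ref{2018_02_12_prop3.11} for $\lambda(p),\delta(p)$, apply Proposition~\ref{2018_03_06_lem5.6} at an arbitrary point of the ball, and then rewrite the pairing $\langle (d\wt{F}_\ve)_q(w^{(\tilde u)}),\tilde u\rangle$ as $\langle (\grad\wt{F}_\ve)_q,w^{(\tilde u)}\rangle_M$ to bound the gradient below by $\delta(p)/3$. The only blemishes are a typo ($B_{\lambda(p)}(q)$ should read $B_{\lambda(p)}(p)$) and that $(d\wt{F}_\ve)_q$ lands in $T_{\wt{F}_\ve(q)}\R$ rather than $T_{F(q)}\R$, which the paper handles by the (trivial, in $\R$) parallel translation $\tau^{F(q)}_{\wt{F}_\ve(q)}$; once that identification is noted your use of Eq.~\eqref{2020_01_09_grad} is legitimate.
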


\begin{proof}
Let $\lambda (p)$ be the positive constant 
as in Proposition \ref{2018_02_12_prop3.11}. 
Fix $x \in B_{\lambda (p)}(p)$ 
and $u \in \Sph_{F(x)}^0$. 
By Proposition \ref{2018_03_06_lem5.6} there is a constant $\ve_0 (p)>0$ such that 
there is a vector $w^{(u)} \in\Sph_x^{m-1}$ 
satisfying 
$
\langle
(d\wt{F}_\ve)_x(w^{(u)}),u
\rangle \ge \delta(p)/ 3
$ 
for all $\ve\in(0,\ve_0(p))$. 
Fix $\ve \in (0, \ve_0 (p))$. 
Since 
$\tau^{F(x)}_{\wt{F}_\ve (x)} (u)
= (d/dt)|_{\wt{F}_\ve (x)}$, 
we have 
$
\delta(p)/ 3 
\le 
\langle
(d\wt{F}_\ve)_x(w^{(u)}), \tau^{F(x)}_{\wt{F}_\ve (x)} (u)
\rangle 
= 
\langle
w^{(u)} (\wt{F}_\ve )
(d/dt)|_{\wt{F}_\ve (x)}, 
(d/dt)|_{\wt{F}_\ve (x)} 
\rangle
=
w^{(u)}(\wt{F}_\ve) \cdot 1 
= 
\langle
(\grad \wt{F}_\ve)_x, w^{(u)} 
\rangle_M
\le 
\|
(\grad \wt{F}_\ve)_x
\|$, 
which shows the first assertion. 
The second assertion follows from the first one.
$\qedd$
\end{proof}

\subsection{Proof of Theorem \ref{2018_01_04_maintheorem}}\label{2018_03_11_Sect5.2}
We follow assumptions and notation 
of Section \ref{2018_03_04_Sect5.1}. 
In addition we assume that  
$N$ is connected and compact, and that 
the Lipschitz map $\wt{F}: M\to N \subset \R^\ell$ has no singular points on $M$.\par 
Since $N$ can be isometrically embedded into $\R^\ell$, 
it follows from the tubular neighborhood theorem 
(cf.\,\cite{H}, \cite{L}) via 
the normal exponential map $\exp^\perp:TN^\perp\to \R^\ell$ that there is a constant $\mu_0 >0$ such that 
$\exp^\perp$ is a diffeomorphism from 
an open neighborhood 
$\cU_{\mu_0} (O(TN^\perp)):=
\{
X\in TN^\perp\,|\,\|X\|<\mu_0\}$ of the zero section 
$O(TN^\perp) =\{o_x \in T_x N^\perp\,|\, x \in N
\}
$ 
onto an open one $\cU_{\mu_0}(N):= \exp^\perp [\cU_{\mu_0} (O(TN^\perp))]$ 
of $N$ in $\R^\ell$, 
which we will call the tubular neighborhood 
of $N$, where $o_x$ is the origin of $T_xN^\perp$. 
Since $\exp^\perp|_{\cU_{\mu_0} (O(TN^\perp))}$ 
is bijective, for any $y \in \cU_{\mu_0}(N)$ 
there is a unique point 
$(z, v) \in \cU_{\mu_0} (O(TN^\perp))$ 
such that $y= \exp^\perp (z,v)$. 
For such a pair $(y, (z,v))$ 
we have the smooth projection 
$\pi_N:\cU_{\mu_0}(N) \to N$ given by 
$\pi_N(y) = \pi_N(\exp^\perp (z,v)):= z$. 
Note that the first variation formula yields 
$\|y -\pi_N(y)\|= \inf_{x \in N} \|y-x\|$ 
for all $y \in \cU_{\mu_0}(N)$. 
For any $z \in N$ the definition of $\pi_N$ gives 
$(T_zN)^{\perp} = \Ker (d\pi_N)_z$.\par 
Since every $p \in M$ is nonsingular for $\wt{F}$, 
there are two positive constants 
$\delta (p)$ and $\ve_0(p)$ obtained 
in Propositions \ref{2018_02_12_prop3.11}
and \ref{2018_03_06_lem5.6}, 
which satisfy Eq.\,\eqref{2018_03_08_prop5.6_1}. 
Set 
$\delta_0:=\min\{\delta (p)\,|\, p \in M\}$ 
and $\ve_0:=\min\{\ve_0 (p)\,|\, p \in M\}$. 
Moreover Lemma \ref{2018_02_25_lem4.10} shows 
that for $\mu_0$ above there is 
a constant $\ve (\mu_0) \in (0, \inj(M)/2)$ 
such that if $\ve \in (0, \ve (\mu_0))$, then 
\begin{equation}\label{2018_12_13_subset}
\wt{F}_\ve (M) \subset \cU_{\mu_0} (N).
\end{equation}
Set 
$\ve_1:=\min \{\ve_0, \ve (\mu_0)\}$. 
It then follows from 
Proposition \ref{2018_03_06_lem5.6} for $q=p$ 
that 
for any $p \in M$ and any 
$\tilde{u} \in \Sph^{n-1}_{\wt{F}(p)}$ 
there is a vector $w^{(\tilde{u})} \in \Sph_p^{m-1}$ 
such that for any $\ve \in (0, \ve_1)$, 
\begin{equation}\label{2018_10_09_lem5.11_1}
\big\langle
(d\wt{F}_\ve)_p(w^{(\tilde{u})}), \tilde{u}\,
\big\rangle \ge \frac{1}{3}\delta_0.
\end{equation}

For any $x, y\in \R^\ell$
let 
$P_{y}^{x}: 
T_{x} \R^\ell 
\to 
T_{y} \R^\ell$ 
be the parallel translation along the line segment 
in $\R^\ell$ joining $x$ to $y$, 
and let 
$P^{y}_{x}
:= \big(P_{y}^{x}\big)^{-1}$.

\begin{lemma}\label{2018_10_09_lem5.11}
Fix $p\in M$ and $\ve \in (0, \ve_1)$. 
For any 
$
\wh{u} 
\in 
P_{\wt{F}_\ve (p)}^{\wt{F}(p)}
\big( 
\Sph_{\wt{F}(p)}^{n-1} 
\big)
$ 
there is a vector 
$
\wh{w} \in \Sph^{m-1}_p$ 
such that 
$
\angle 
(
(d\wt{F}_\ve)_p (\wh{w}), \wh{u}\,
)
<\pi/2
$ 
holds where 
$
\angle 
\big(
(d\wt{F}_\ve)_p (\wh{w}), \wh{u}\,
\big)
$ 
is the angle between 
$(d\wt{F}_\ve)_p (\wh{w})$ and $\wh{u}$ 
at the origin $o_{\wt{F}_\ve (p)}$ 
of $T_{\wt{F}_\ve (p)} \R^\ell$.
\end{lemma}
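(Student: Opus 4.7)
The plan is to deduce the conclusion directly from Proposition \ref{2018_03_06_lem5.6} applied at $q=p$, with the sole technical task being to reconcile the tangent spaces along the Euclidean parallel translation $P$. First, given $\hat{u} \in P_{\wt{F}_\ve(p)}^{\wt{F}(p)}\big(\Sph^{n-1}_{\wt{F}(p)}\big)$, I would set $\tilde{u} := P^{\wt{F}(p)}_{\wt{F}_\ve(p)}(\hat{u}) \in \Sph^{n-1}_{\wt{F}(p)}$, so that $\hat{u} = P_{\wt{F}_\ve(p)}^{\wt{F}(p)}(\tilde{u})$. Applying Proposition \ref{2018_03_06_lem5.6} with $q=p$ to this $\tilde{u}$ produces a vector $w^{(\tilde{u})} \in \Sph^{m-1}_p$ satisfying Eq.\,\eqref{2018_10_09_lem5.11_1}, and I would then set $\hat{w} := w^{(\tilde{u})}$.

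The key observation is that $\R^\ell$ carries a globally parallel frame: under the canonical identification $T_x\R^\ell \cong \R^\ell$, the parallel translation $P_y^x$ is literally the identity map, and in particular it is a linear isometry. Using this, I would rewrite
\[
\big\langle (d\wt{F}_\ve)_p(\hat{w}), \hat{u} \big\rangle
= \big\langle P^{\wt{F}(p)}_{\wt{F}_\ve(p)}\big((d\wt{F}_\ve)_p(\hat{w})\big), \tilde{u} \big\rangle
= \big\langle (d\wt{F}_\ve)_p(\hat{w}), \tilde{u} \big\rangle,
\]
where the first equality is the isometry property of $P$ and the second is the canonical identification under which Proposition \ref{2018_03_06_lem5.6} is formulated (note that $\wt{F}_\ve(p) \in \cU_{\mu_0}(N) \subset \R^\ell$ by Eq.\,\eqref{2018_12_13_subset}, so $P_{\wt{F}_\ve(p)}^{\wt{F}(p)}$ is unambiguously defined as parallel translation along a line segment of $\R^\ell$). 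Combining with Eq.\,\eqref{2018_10_09_lem5.11_1} yields
\[
\big\langle (d\wt{F}_\ve)_p(\hat{w}), \hat{u}\big\rangle \ge \tfrac{1}{3}\delta_0 > 0,
\]
and since $\hat{u}$ is a unit vector while the positive inner product forces $(d\wt{F}_\ve)_p(\hat{w}) \neq o_{\wt{F}_\ve(p)}$, the angle between these two vectors at the origin of $T_{\wt{F}_\ve(p)}\R^\ell$ is strictly less than $\pi/2$, as required.

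I anticipate no substantive obstacle in this argument. The lemma is essentially a change of viewpoint: Proposition \ref{2018_03_06_lem5.6} provides an inequality between vectors sitting in nominally different tangent spaces $T_{\wt{F}_\ve(p)}\R^\ell$ and $T_{\wt{F}(p)}\R^\ell$, and Lemma \ref{2018_10_09_lem5.11} simply repackages this as a geometric angle condition inside the single space $T_{\wt{F}_\ve(p)}\R^\ell$, which is the form needed for the subsequent construction (presumably involving the projection $\pi_N$ to realize the Ehresmann fibration). The only point requiring care is to make explicit the flat-parallel-transport identification implicit in Proposition \ref{2018_03_06_lem5.6}; once this is stated, the proof is complete in a few lines.
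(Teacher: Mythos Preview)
Your proposal is correct and follows essentially the same route as the paper's proof: define $\tilde{u}$ by pulling $\wh{u}$ back to $T_{\wt{F}(p)}\R^\ell$ via Euclidean parallel translation, apply Proposition~\ref{2018_03_06_lem5.6} (in the form of Eq.\,\eqref{2018_10_09_lem5.11_1}) to obtain $\wh{w}:=w^{(\tilde{u})}$, use that $P$ is an isometry to identify the two inner products, and conclude the angle bound from positivity. One minor notational slip: with the paper's convention $P_y^x:T_x\R^\ell\to T_y\R^\ell$ and $P^y_x:=(P_y^x)^{-1}$, the map sending $\wh{u}\in T_{\wt{F}_\ve(p)}\R^\ell$ back to $T_{\wt{F}(p)}\R^\ell$ is $P^{\wt{F}_\ve(p)}_{\wt{F}(p)}$, not $P^{\wt{F}(p)}_{\wt{F}_\ve(p)}$ as you wrote (and similarly in your displayed chain of equalities); this does not affect the argument.
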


\begin{proof}
Fix $\wh{u} 
\in 
P_{\wt{F}_\ve (p)}^{\wt{F}(p)}
\big( 
\Sph_{\wt{F}(p)}^{n-1} 
\big)$. 
By Proposition \ref{2018_03_06_lem5.6},  
for $\tilde{u}:= 
P^{\wt{F}_\ve (p)}_{\wt{F}(p)}
\big( 
\wh{u}
\big) (\in \Sph^{n-1}_{\wt{F}(p)})$
there is a vector $\wh{w}:= w^{(\tilde{u})} \in \Sph_p^{m-1}$ 
with Eq.\,\eqref{2018_10_09_lem5.11_1}. 
Since 
$\langle
(d\wt{F}_\ve)_p(\wh{w}), \tilde{u}\,
\rangle
= 
\langle
(d\wt{F}_\ve)_p(\wh{w}), P_{\wt{F}_\ve (p)}^{\wt{F}(p)}
( 
\tilde{u}
)\,
\rangle$, we see 
$
0< \delta_0/ 3 
\le 
\langle
(d\wt{F}_\ve)_p(\wh{w}), \tilde{u}\,
\rangle
= 
\langle
(d\wt{F}_\ve)_p(\wh{w}), \wh{u}\,
\rangle 
=
\|
(d\wt{F}_\ve)_p(\wh{w})
\| 
\cos 
( 
\angle 
((
d\wt{F}_\ve)_p (\wh{w}), \wh{u}\,
))$, 
and finally 
$\angle 
\big(
(d\wt{F}_\ve)_p (\wh{w}), \wh{u}\,
\big)<\pi/2$. 
$\qedd$
\end{proof}

Since every $p \in M$ is nonsingular for $\wt{F}$, 
$\ra (g) = n$ holds 
for all $g \in \partial \wt{F} (p)$, and hence for each 
$\ve \in (0, \ve_1)$ 
we see, by Lemma \ref{2018_02_25_lem4.10} and Eq.\,\eqref{2018_10_09_lem5.11_1}, that 
\begin{equation}\label{2018_12_13_rank}
\Im (d\wt{F}_\ve)_p 
\cap 
P_{\wt{F}_\ve (p)}^{\wt{F}(p)} 
(\Ker (d\pi_N)_{\wt{F} (p)})
=\{o_{\wt{F}_\ve(p)}\} \qquad (p \in M).
\end{equation}
Moreover, by virtue of Eq.\,\eqref{2018_12_13_subset}, 
for each 
$\ve \in (0, \ve_1)$ 
we can define the smooth map 
$f_\ve :M\to N$ by 
\[
f_\ve (p) := (\pi_N \circ \wt{F}_\ve) (p), \qquad (p \in M).
\]
\begin{lemma}\label{2018_05_08_lem5.9}
For any $\eta >0$ there is a constant $\kappa(\eta) \in (0,\ve_1)$ such that if $\ve \in (0, \kappa(\eta))$, then
$d_N (f_\ve (p), \wt{F}(p))< \eta$ 
and 
$\Im (d\wt{F}_\ve)_p
\cap 
\Ker (d \pi_N)_{\wt{F}_\ve (p)} 
=\{ o_{\wt{F}_\ve (p)}\}$ 
hold for all $p \in M$.
\end{lemma}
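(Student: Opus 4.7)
The lemma contains two independent assertions, which I would handle separately. For the distance bound, Lemma \ref{2018_02_25_lem4.10} yields $\|\wt{F}_\ve(p)-\wt{F}(p)\|\le\ve\Lambda(\ve)\L(F)$ uniformly on $M$. Since $\pi_N$ is smooth on the open tube $\cU_{\mu_0}(N)$ and $N$ is compact, a slight shrinking produces a closed subtube containing $\wt{F}_\ve(M)\cup\wt{F}(M)$ for every $\ve\in(0,\ve_1)$, on which $\pi_N$ is Lipschitz with some constant $L_{\pi_N}$. Compactness of the isometrically embedded $N$ also furnishes a constant $C_N$ with $d_N(x,y)\le C_N\|x-y\|$ whenever $\|x-y\|$ is small, hence $d_N(f_\ve(p),\wt{F}(p))\le C_NL_{\pi_N}\ve\Lambda(\ve)\L(F)$, which is $<\eta$ for $\ve$ small, uniformly in $p$.

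For the transversality assertion I would reduce to a rank-perturbation statement. Since parallel translation along chords in $\R^\ell$ is just the canonical identification of tangent spaces, Eq.\,\eqref{2018_12_13_rank} asserts that $\Im(d\wt{F}_\ve)_p$ and $\Ker(d\pi_N)_{\wt{F}(p)}=(T_{\wt{F}(p)}N)^\perp$ intersect trivially, viewed as subspaces of $\R^\ell$. Writing $P_0:=(d\pi_N)_{\wt{F}(p)}$, which is orthogonal projection onto $T_{\wt{F}(p)}N$ since $\wt{F}(p)\in N$, Eq.\,\eqref{2018_10_09_lem5.11_1} reads $\|(P_0\circ(d\wt{F}_\ve)_p)^*(\tilde{u})\|\ge\delta_0/3$ for every unit $\tilde{u}\in T_{\wt{F}(p)}N$; equivalently, the smallest nonzero singular value of $P_0\circ(d\wt{F}_\ve)_p$ is at least $\delta_0/3$, uniformly in $(p,\ve)$. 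The resulting surjectivity of this composition onto $T_{\wt{F}(p)}N$, together with Eq.\,\eqref{2018_12_13_rank}, forces $\dim\Im(d\wt{F}_\ve)_p=n$.

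Next I would compare $P_\ve:=(d\pi_N)_{\wt{F}_\ve(p)}$ with $P_0$. Viewing $d\pi_N$ as a smooth field of linear endomorphisms of $\R^\ell$ over $\cU_{\mu_0}(N)$, its restriction to the closed subtube above has bounded derivative, so in operator norm $\|P_\ve-P_0\|\le C_\pi\|\wt{F}_\ve(p)-\wt{F}(p)\|\le C_\pi\ve\Lambda(\ve)\L(F)$. Combined with a uniform operator-norm bound $\|(d\wt{F}_\ve)_p\|\le C_F$, which I would extract from Definitions \ref{2018_01_20_def4.1} and \ref{2018_02_25_def4.9} (Riemannian convolution preserves the Lipschitz constant of $\wt{F}\circ\exp_{p_i}$ up to a factor controlled by $\Lambda(\ve)$, and the $d\psi_i$ are uniformly bounded on the compact $M$), this gives $\|P_\ve\circ(d\wt{F}_\ve)_p-P_0\circ(d\wt{F}_\ve)_p\|\le C_\pi C_F\ve\Lambda(\ve)\L(F)$. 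Choosing $\ve$ small enough that this is $<\delta_0/3$, the standard fact that the rank of a linear map cannot drop under operator-norm perturbations smaller than its smallest nonzero singular value yields $\ra(P_\ve\circ(d\wt{F}_\ve)_p)\ge n$; since the image lies in the $n$-dimensional $T_{f_\ve(p)}N$, the rank equals $n$. Hence $P_\ve$ is injective on the $n$-dimensional $\Im(d\wt{F}_\ve)_p$, which is exactly the claimed transversality.

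The main obstacle is not conceptual but is careful bookkeeping: securing the uniform Lipschitz bound $C_F$ for $(d\wt{F}_\ve)_p$ across $M$, the uniform smoothness constant $C_\pi$ for $d\pi_N$ on the relevant tube, and the uniformity of $\delta_0=\min\{\delta(p)\,|\,p\in M\}>0$, all of which hinge on the compactness of $M$ and $N$. Once these uniform estimates are in hand, $\kappa(\eta)\in(0,\ve_1)$ can be specified by requiring both $C_NL_{\pi_N}\ve\Lambda(\ve)\L(F)<\eta$ and $C_\pi C_F\ve\Lambda(\ve)\L(F)<\delta_0/3$ to hold throughout $(0,\kappa(\eta))$.
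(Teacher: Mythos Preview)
Your argument is correct, and it follows a genuinely more quantitative route than the paper's. For the distance assertion the paper proceeds essentially as you do, using Lemma~\ref{2018_02_25_lem4.10} together with the isometric embedding to compare $d_N$ with the Euclidean norm; your packaging via a Lipschitz constant $L_{\pi_N}$ on a closed subtube is a clean way to make this uniform. For the transversality assertion the paper instead argues qualitatively: it observes that as $\ve\downarrow 0$ the normal fibres converge, i.e.\ $\Ker(d\pi_N)_{\wt{F}_\ve(p)}\to\Ker(d\pi_N)_{\wt{F}(p)}$ (via a family of perpendicular geodesics $\gamma_\ve\to\gamma_0$), and then invokes the already--established transversality at $\wt{F}(p)$, namely Eq.\,\eqref{2018_12_13_rank}, to conclude transversality at $\wt{F}_\ve(p)$ for small $\ve$; this is done pointwise with a threshold $\beta_2(p,\eta)$, and $\kappa(\eta):=\min_{p\in M}\beta_2(p,\eta)$ is taken at the end.

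Your singular--value perturbation argument is sharper in that every constant is explicit and uniform in $p$ from the outset, so the positivity of $\kappa(\eta)$ is immediate rather than relying on a minimum over $M$ of pointwise thresholds. The price is the bookkeeping you flag: for the uniform bound $C_F$ on $\|(d\wt{F}_\ve)_p\|$, note that differentiating Eq.\,\eqref{2018_03_04_def4.9_1} produces a cross--term $\sum_i d\psi_i(v)\,\wt{F}^{(p_i)}_\ve(q)$ which is only controlled after using $\sum_i d\psi_i=0$ to replace $\wt{F}^{(p_i)}_\ve(q)$ by $\wt{F}^{(p_i)}_\ve(q)-\wt{F}(q)$ and then invoking Lemma~\ref{2018_01_25_lem4.3}; the remaining term $\sum_i\psi_i(q)\,(d\wt{F}^{(p_i)}_\ve)_q(v)$ is handled via Remark~\ref{2020_01_13_rem4.8} and a uniform Jacobi--field bound on the compact $M$. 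With that detail filled in, your approach and the paper's are both valid; yours yields an explicit $\kappa(\eta)$, while the paper's avoids the operator--norm estimate on $d\wt{F}_\ve$ at the cost of a softer convergence argument.
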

 
\begin{proof}
Fix $p\in M$. 
By Lemma \ref{2018_02_25_lem4.10}, 
$
\lim_{\ve \downarrow 0}\|\wt{F}_\ve (p)-\wt{F}(p)\| =0
$, 
and since $\pi_N (\wt{F} (p))= \wt{F} (p)$, 
we have
$\lim_{\ve \downarrow 0}\|f_\ve (p)-\wt{F}(p)\|=0$.  
From this for any $\eta>0$ there is a constant 
$\alpha_1(p, \eta)\in (0,\ve_1)$ 
such that if $\ve \in (0, \alpha_1(p, \eta))$, then
\begin{equation}\label{2018_10_26_lem5.9_new2}
\|f_\ve (p)-\wt{F}(p)\|< \frac{\eta}{\eta +1}.
\end{equation}
Fix $\eta >0$. Since $N$ is isometrically embedded 
into $\R^\ell$, 
$\lim_{\ve \downarrow 0}\|f_\ve (p)-\wt{F}(p)\|=0$ 
also implies that there is a constant 
$\alpha_2(p, \eta)\in (0,\ve_1)$ 
such that if $\ve \in (0, \alpha_2(p, \eta))$, then
\begin{equation}\label{2018_10_26_lem5.9_new3}
\left|
\frac{d_N(f_\ve (p)-\wt{F}(p))}{\|f_\ve (p)-\wt{F}(p)\|}
- 1
\right|
< \eta. 
\end{equation}
Let 
$\beta_1(p,\eta):= \min\{\alpha_1(p, \eta), \alpha_2(p, \eta)\}$. 
Eqs.\,\eqref{2018_10_26_lem5.9_new2} and \eqref{2018_10_26_lem5.9_new3} show that if 
$\ve \in (0, \beta_1(p,\eta))$, then 
\begin{equation}\label{2018_10_26_lem5.9_new4}
d_N(f_\ve (p), \wt{F}(p))< \eta.
\end{equation}
For each $\ve \in (0, \beta_1(p,\eta))$ 
let $\gamma_\ve:[0, \mu_0)\to \cU_{\mu_0} (N)$ be 
a unit speed minimal geodesic 
emanating perpendicularly from $f_\ve (p)$ 
and passing through $\wt{F}_\ve (p)$. 
Eq.\,\eqref{2018_10_26_lem5.9_new4} shows 
that by letting $\ve \downarrow 0$, 
$\gamma_\ve$ converges to a unit speed minimal 
geodesic $\gamma_0 :[0, \mu_0)\to \cU_{\mu_0} (N)$ emanating perpendicularly from $\wt{F}(p)$. 
Since 
$\lim_{\ve \downarrow 0}\|\wt{F}_\ve (p)-\wt{F}(p)\| =0$, 
we see 
$\lim_{\ve \downarrow 0} \Ker (d\pi_N)_{\wt{F}_\ve (p)}
= \Ker (d\pi_N)_{\wt{F}(p)}$. 
Since 
$\Im (d\wt{F}_\ve)_p
\cap 
P_{\wt{F}_\ve (p)}^{\wt{F}(p)} 
(
\Ker (d\pi_N)_{\wt{F} (p)}
) 
=\{o_{\wt{F}_\ve(p)}\}$ 
by Eq.\,\eqref{2018_12_13_rank}, we see that 
for $\eta$ there is a constant $\beta_2(p,\eta) \in (0,\beta_1(p,\eta)$ such that if $\ve \in (0, \beta_2(p,\eta))$, then 
\begin{equation}\label{2018_05_08_lem5.9_8}
\Im (d\wt{F}_\ve)_p
\cap 
\Ker (d \pi_N)_{\wt{F}_\ve (p)} 
=\{ o_{\wt{F}_\ve (p)}\}. 
\end{equation}
By setting
$\kappa(\eta):=\min\{\beta_2(p,\eta)\,|\, p \in M\}$, 
Eqs.\,\eqref{2018_10_26_lem5.9_new4} 
and \eqref{2018_05_08_lem5.9_8} 
complete the proof.$\qedd$
\end{proof}

Fix $\ve \in (0, \kappa (\eta))$. 
Lemma \ref{2018_05_08_lem5.9} shows 
$\ra (d \pi_N|_{\Im (d\wt{F}_\ve)_p})=n$ 
for all $p\in M$, 
and hence $\ra ((df_\ve)_p)=n$ for all $p\in M$, which proves that $f_\ve$ is a 
smooth submersion from $M$ to $N$. Note that 
$f_\ve$ is an open map, because $f_\ve$ is locally equivalent to the canonical projection on some 
coordinate neighborhood of each point of $M$, 
see \cite{Tu2011}. Since $f_\ve$ is continuous, 
and since $M$ is compact, $f_\ve (M)$ is compact 
in $N$. $f_\ve (M)$ is thus closed in $N$, for $N$ is Hausdorff. 
Since $M$ is open in $M$, $f_\ve (M)$ is open in $N$. Connectedness of $N$ shows that 
$f_\ve$ is surjective. Let $K$ be 
any compact set in $N$. By virtue of the 
compactness of $N$, $K$ is closed in $N$. From 
the continuity of $f_\ve$ on $M$, $f_\ve^{-1} (K)$ 
is closed in $M$. Since $M$ is compact, 
$f_\ve^{-1} (K)$ is also, and hence $f_\ve$ is proper. 
Since $f_\ve$ is a proper and surjective submersion between compact, smooth manifolds, 
Ehresmann's lemma \cite{Ehr} shows that $f_\ve$ 
is a locally trivial fibration, i.e., an Ehresmann fibration.
$\qedd$

\section{Proof of Reeb's sphere theorem 
for Lipschitz functions (Theorem \ref{2019_06_08_thm1.7})}\label{2019_06_08_Sect6}

Throughout this section 
let $M$ be a closed Riemannian manifold of dimension 
$m$, and we assume that $M$ admits a Lipschitz 
function $F:M\to \R$ with exactly two singular 
points in the sense of Clarke, denoted by $z_1, z_2 \in M$.\par  
Since $z_1, z_2 \in M$ 
are singular for $F$, we see, 
by Lemma \ref{2018_07_08_lem2.21}, 
that 
$o_{z_i} \in \cdas_F(z_i)$ ($i=1,2$) 
where $\cdas_F(z_i)$ indicates the generalized 
gradient of $F$ at $z_i$ (see 
Definition \ref{2018_07_08_def2.18}). 
From the maximum and minimum values 
theorem 
we can therefore assume, without loss of generality, 
that $F(z_1)= \min_{x\,\in\,M}F(x)$ and 
$F(z_2)= \max_{y\,\in\,M}F(y)$. 
For simplicity of notation let 
$a_i:= F(z_i)$ for $i=1,2$. 
Note that $a_1< a_2$. 

\begin{lemma}\label{2019_06_08_lem6.1}
For any $r>0$ with $B_r(z_1)\cap B_r(z_2)= \emptyset$
there is a constant 
$b_i (r) \in (a_1,a_2)$ such that 
$F^{-1}(b_i (r)) \subset B_r(z_i)$ for each $i=1,2$.
\end{lemma}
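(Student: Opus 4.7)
The plan is to reduce the statement to a compactness argument once I know that the extrema of $F$ are attained uniquely, namely at $z_1$ and $z_2$.

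First I would establish that $F^{-1}(a_1)=\{z_1\}$ and $F^{-1}(a_2)=\{z_2\}$. The key subclaim is that every local extremum of a Lipschitz function is singular in the sense of Clarke. By Lemma \ref{2018_07_08_lem2.21} this amounts to showing $o_{x_0}\in\cdas_F(x_0)$ whenever $x_0$ is a local minimum (the maximum case being symmetric). Suppose to the contrary that $o_{x_0}\notin\cdas_F(x_0)=\Conv(\divi_F(x_0))$; since this set is compact and convex, a separating hyperplane in $T_{x_0}M$ produces a unit vector $v$ and a constant $\eta>0$ with $\langle w,v\rangle\ge\eta$ for every $w\in\divi_F(x_0)$. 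Using Rademacher's theorem on $B_{r(x_0)}(x_0)$ together with the fundamental theorem of calculus along geodesic segments in the direction of $-v$, one can then produce a sequence $y_i\to x_0$ with $F(y_i)<F(x_0)$, contradicting minimality. Since $F$ attains both extrema on the compact manifold $M$ and the singular set is exactly $\{z_1,z_2\}$ with $a_1<a_2$, each extremum must be attained at a unique point.

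Next, for fixed $r>0$ and $i\in\{1,2\}$, the set $A_i:=M\setminus B_r(z_i)$ is compact (closed in the compact $M$) and excludes $z_i$, so by the uniqueness just established $F|_{A_i}$ never attains the value $a_i$. Since $F|_{A_i}$ attains its infimum and supremum on the compact $A_i$, we obtain
\[
\kappa_1':=\min_{x\in A_1} F(x)>a_1, \qquad \kappa_2':=\max_{x\in A_2} F(x)<a_2.
\]
Choosing $\kappa_1\in(a_1,\min\{\kappa_1',a_2\})$ and $\kappa_2\in(\max\{\kappa_2',a_1\},a_2)$ places each $\kappa_i$ in $(a_1,a_2)$. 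Any $x\in F^{-1}(\kappa_i)$ then satisfies $F(x)=\kappa_i$, which is strictly separated from the extremum of $F$ on $A_i$, so $x\notin A_i$, i.e., $x\in B_r(z_i)$. This yields the desired inclusion.

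The only non-routine ingredient is the subclaim that local extrema are Clarke-singular. The hard part will be making the separation-and-descent argument precise from the intrinsic definition of $\divi_F(x_0)$, since the approximating sequences $\{x_j^{(w)}\}$ that realize elements $w\in\divi_F(x_0)$ are not a priori aligned with any particular geodesic direction emanating from $x_0$; the remedy I would implement is to foliate a strongly convex neighborhood of $x_0$ by geodesic rays and invoke Rademacher together with Fubini to extract, along almost every ray, a sequence of differentiability points of $F$ whose gradients realize a prescribed element of $\divi_F(x_0)$ and thereby deliver the required directional decrease.
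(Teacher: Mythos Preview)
Your proof is correct and takes a somewhat different route from the paper's. The paper argues by contradiction: assuming no such $\kappa_1$ exists, it extracts a sequence $x_n\notin B_{r_0}(z_1)$ with $F(x_n)\to a_1$, passes to a limit $\bar{x}\neq z_1$ with $F(\bar{x})=a_1$, and concludes that $\bar{x}$ is a second singular minimum point. You instead first isolate the fact $F^{-1}(a_i)=\{z_i\}$ and then run a direct compactness argument on $A_i=M\setminus B_r(z_i)$. Your version is slightly more transparent and yields an explicit window for $\kappa_i$; the paper's sequence argument is shorter on the page but hides the same content.

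Both approaches rest on the same key subclaim, that a point attaining a global extremum of a Lipschitz function is Clarke-singular. The paper simply asserts this (``$\bar{x}$ attains minimum value of $F$, and hence $\bar{x}$ is a critical point of $F$''), whereas you flag it and sketch a separation-plus-descent proof. Your concern about aligning gradient sequences with geodesic rays is legitimate but a bit overstated: once the separating hyperplane gives $\langle w,v\rangle\ge\eta$ for all $w\in\divi_F(x_0)$, a short contradiction shows $\langle(\grad F)_x,v\rangle\ge\eta/2$ at \emph{all} differentiability points $x$ in some neighborhood of $x_0$; then the Fubini step you describe (in normal coordinates) plus continuity of $F$ gives $F(\exp_{x_0}(-tv))<F(x_0)$ for small $t>0$. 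Alternatively, this is exactly Clarke's generalized Fermat rule and can be cited directly.
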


\begin{proof}
We prove this lemma only in the case of $i=1$. 
Suppose not. There is then $r_0 > 0$ such that 
for any $\lambda \in (a_1,a_2)$, $F^{-1}(\lambda) \nsubseteq B_{r_0} (z_1)$ holds. 
For each $n \in \N$ there is 
$x_n \in F^{-1}( a_1 + (a_2-a_1) / 2n)$ 
such that $x_n \not\in B_{r_0} (z_1)$, 
and hence we get a sequence $\{x_n\}_{n \in \N}$ of such points $x_n$. Since $M$ is compact, 
$\{x_n\}_{n \in \N}$ has a convergent subsequence 
$\{x_{n_j}\}_{j \in \N}$. 
Let $\bar{x}:= \lim_{j\to\infty} x_{n_j}$. 
Since $F$ is continuous on $M$, 
we see that 
$
F(\bar{x}) 
= \lim_{j\to\infty} F(x_{n_j})
= \lim_{j\to\infty} 
\{ a_1 + (a_2-a_1) / 2n_j\}= a_1$. 
Now $\bar{x} \not= z_1$, 
and $F(\bar{x})= a_1$, hence $\bar{x}$ is a critical point of $F$, 
which is a contradiction.$\qedd$
\end{proof}

Fix $r >0$ with $B_r(z_1)\cap B_r(z_2)= \emptyset$. 
For $\ve \in (0, \inj(M)/2)$ 
let $\wt{F}_\ve: M \to \R$ 
be the global smooth approximation of $F$ 
defined by Eq.\,\eqref{2018_03_04_def4.9_1}.

\begin{lemma}\label{2019_06_08_lem6.4}
There is an open set $V$ of $M$ and 
a constant $\ve_0 \in (0, \inj(M)/2)$ 
such that if $\ve \in (0,\ve_0)$, then 
$F^{-1}([b_1 (r), b_2 (r)]) \subset V$, 
and $\wt{F}_\ve$ has no critical points on $V$.
\end{lemma}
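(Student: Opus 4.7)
The plan is to argue by a compactness reduction using Corollary \ref{2018_11_07_cor1.6} (equivalently Corollary \ref{2018_09_21_cor5.8}). First I would observe that, by Corollary \ref{2019_06_08_cor6.2}, $b_1, b_2 \in (a_1, a_2)$, and hence neither $z_1$ nor $z_2$ lies in $M_{b_1}^{b_2} = F^{-1}([b_1,b_2])$. Since $z_1$ and $z_2$ are the only singular points of $F$ in the sense of Clarke, every point of $M_{b_1}^{b_2}$ is nonsingular for $F$.

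Next, for each $p \in M_{b_1}^{b_2}$, Corollary \ref{2018_09_21_cor5.8} yields two positive constants $\lambda(p)$ and $\ve_0(p)$, with $\ve_0(p) \in (0, \inj(M)/2)$, such that $\wt{F}_\ve$ has no critical points on the open ball $B_{\lambda(p)}(p)$ whenever $\ve \in (0, \ve_0(p))$. The family $\{B_{\lambda(p)}(p)\}_{p \in M_{b_1}^{b_2}}$ is an open cover of the compact set $M_{b_1}^{b_2}$, so there exist finitely many points $q_1, q_2, \ldots, q_s \in M_{b_1}^{b_2}$ such that
\[
M_{b_1}^{b_2} \subset \bigcup_{j=1}^{s} B_{\lambda(q_j)}(q_j).
\]

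Define the open set $V := \bigcup_{j=1}^{s} B_{\lambda(q_j)}(q_j)$ and the constant $\ve_1 := \min\{\ve_0(q_j) \mid j = 1, 2, \ldots, s\}$. Then $M_{b_1}^{b_2} \subset V$, and $\ve_1 \in (0, \inj(M)/2)$ by the choice of each $\ve_0(q_j)$. For any $\ve \in (0, \ve_1)$ and any $x \in V$, $x$ lies in some ball $B_{\lambda(q_j)}(q_j)$, and since $\ve < \ve_0(q_j)$, Corollary \ref{2018_09_21_cor5.8} guarantees $(\grad \wt{F}_\ve)_x \ne 0$. Hence $\wt{F}_\ve$ has no critical points on $V$ for all $\ve \in (0, \ve_1)$, as required.

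There is no substantive obstacle here: the proof is essentially a packaging of Corollary \ref{2018_09_21_cor5.8} with the Heine--Borel property of $M_{b_1}^{b_2}$. The only point worth being careful about is ensuring that the ball data $(\lambda(p), \ve_0(p))$ furnished by Corollary \ref{2018_09_21_cor5.8} extract a common $\ve_1$ valid on the whole finite cover, which is immediate by taking the minimum over finitely many indices.
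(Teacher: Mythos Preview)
Your proposal is correct and follows essentially the same approach as the paper: both arguments apply Corollary \ref{2018_09_21_cor5.8} pointwise on the compact set $M_{b_1}^{b_2}$, extract a finite subcover, set $V$ to be the union of the covering balls, and take $\ve_1$ to be the minimum of the finitely many $\ve_0(q_j)$.
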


\begin{proof}For simplicity of notation let 
$M':= F^{-1}([b_1 (r), b_2 (r)])$.
We first show $M' \subset V$. 
Since $z_1$ and $z_2$ are the only two critical points 
of $F$, and since $a_1<b_1(r) < b_2(r) <a_2$, 
$F$ has no critical points on $M'$. 
It follows from Lemma \ref{2018_07_08_lem2.21} and Corollary \ref{2018_09_21_cor5.8} that 
for each $p \in M'$ there are two  
constants $\lambda (p) >0$ and $\bar{\ve} (p) >0$ 
such that for each $\ve \in (0, \bar{\ve} (p))$, 
$\grad \wt{F}_\ve \not=0$ on $B_{\lambda (p)} (p)$. 
Since $M'$ is compact, 
there is a finite set 
$\{p_1, p_2, \ldots, p_k\} \subset M'$ such that 
$
M'
\subset 
\bigcup_{i=1}^k B_{\lambda (p_i)} (p_i)$. 
Since $\bigcup_{i=1}^k B_{\lambda (p_i)} (p_i)
$ is open in $M$, setting $V:= \bigcup_{i=1}^k B_{\lambda (p_i)} (p_i)$, we get the first assertion.\par 
We next show the second assertion. Set 
$\ve_0:= \min\{\bar{\ve} (p_1),\bar{\ve}(p_2),\ldots,\bar{\ve}(p_k)\}$. 
Since 
$
F^{-1}([b_1(r),b_2(r)]) \subset V = \bigcup_{i=1}^k B_{\lambda (p_i)} (p_i)
$ and $\grad \wt{F}_\ve \not=0$ on $V$, 
$\wt{F}_\ve$ has no critical points on $V$ 
for all $\ve \in (0, \ve_0)$.
$\qedd$
\end{proof}
 
\begin{lemma}\label{2019_06_09_lem6.6}
There is a constant $\ve_1 \in (0, \ve_0]$ 
such that if $\ve \in (0, \ve_1)$, then 
for any $c \in (b_1(r),b_2(r))$, 
$\wt{F}^{-1}_\ve ([b_1(r),b_2(r)])$ is diffeomorphic to 
$\wt{F}^{-1}_\ve (c) \times [b_1(r),b_2(r)]$.
\end{lemma}

\begin{proof}Fix $i \in \{1,2\}$. Since 
$B_r (z_i)\cap V$ is an open 
neighborhood of $F^{-1}(b_i (r))$, we see, by 
Lemma \ref{2018_02_25_lem4.10}, that 
there is a constant $\hat{\ve}_i \in (0, \inj(M)/2)$ 
such that if $\ve \in (0,\hat{\ve}_i)$, then 
$\wt{F}^{-1}_\ve (b_i (r)) 
\subset B_r (z_i)\cap V$. 
Let $\ve_1:=\min\{\ve_0, \hat{\ve}_1, \hat{\ve}_2\}$, 
and fix $\ve \in (0, \ve_1)$. 
Since $\wt{F}^{-1}_\ve (b_i (r)) 
\subset B_r (z_i)\cap V$, 
$\wt{F}^{-1}_\ve ([b_1(r),b_2(r)]) \subset V$ holds, 
and hence we see, by Lemma \ref{2019_06_08_lem6.4}, that $\wt{F}_\ve$ has no 
critical points on $\wt{F}^{-1}_\ve ([b_1(r),b_2(r)])$. 
From this, for each $t \in [b_1(r),b_2(r)]$, 
$\wt{F}^{-1}_\ve (t)$ is an $(m-1)$-dimensional 
compact regular submanifold of $M$. 
$\wt{F}^{-1}_\ve ([b_1(r),b_2(r)])$ is therefore diffeomorphic to 
$\wt{F}^{-1}_\ve (b_1) \times [b_1(r), b_2 (r)]$ 
by a well-known theorem in Morse theory (\cite[Theorem 2.31]{Matsumoto}, 
or \cite[Theorem 3.1]{Milnor2}). 
Fix $c \in (b_1(r),b_2(r))$. 
Since $\wt{F}^{-1}_\ve (s)$ and $\wt{F}^{-1}_\ve (t)$ are 
diffeomorphic for all $s, t \in [b_1(r),b_2(r)]$, 
$\wt{F}^{-1}_\ve (b_1)$ is diffeomorphic to 
$\wt{F}^{-1}_\ve (c)$, which yields our assertion.$\qedd$
\end{proof}

Now we give the proof of Theorem \ref{2019_06_08_thm1.7}. 
From Lemma \ref{2019_06_08_lem6.1} we observe 
$\lim_{r \downarrow 0} b_i (r)= a_i$ ($i=1,2$). 
Fix $c \in (b_1(r),b_2(r)) \subseteq (a_1, a_2)$. Let 
$r \downarrow 0$. Lemma \ref{2019_06_09_lem6.6} then shows that 
$M$ is homeomorphic to the suspension, denoted by 
$\Sigma$, of 
the compact regular submanifold $\wt{F}^{-1}_\ve (c)$ of $M$ 
for a sufficiently small $\ve >0$. It follows 
easily from a result of Brown \cite{Bro} 
(see also \cite[Introduction]{Edw}) that 
$\Sigma$ is homeomorphic to the $m$-sphere 
for a sufficiently small $\ve >0$, 
and $M$ is also.$\qedd$

\addcontentsline{toc}{section}{References}

\begin{flushleft}
K.~Kondo\\
Department of Mathematics, Faculty of Science, 
Okayama University\\
Okayama City, Okayama Pref. 700-8530, Japan\\
{\small e-mail: 
{\tt keikondo@math.okayama-u.ac.jp}}
\end{flushleft}

\end{document}